\DeclareMathOperator*{\argmin}{arg\,min}
\newcommand{\indicate}[1]{\mathbf{1}_{\{#1\}}}
\newtheorem{thm}{Theorem}
\theoremstyle{definition}
\newtheorem{assumption}{Assumption}
\newtheorem{definition}{Definition}
\newtheorem{remark}{Remark}
\def\Rmax{R_{\text{\textnormal{max}}}}
\def\xmax{x_{\text{\textnormal{max}}}}
\def\E{\mathbf{E}}
\def\P{\mathbf{P}}
\def \rhooperator{\mathlarger{\rho}}
\newcommand\rsmraise[1]{%
  \ifx#1\displaystyle .8\else
    \ifx#1\textstyle .8\else
      \ifx#1\scriptstyle .6\else
        .45%
      \fi
    \fi
  \fi}
\titleformat*{\section}{\Large\bfseries}
\titleformat*{\subsection}{\large      \bfseries}
\titleformat*{\subsubsection}{\normalsize\bfseries}
\titleformat*{\paragraph}{\normalsize\bfseries}
\titleformat*{\subparagraph}{\normalsize\bfseries}
\begin{document}

\title{\textbf{\Large Practicality of Nested Risk Measures for Dynamic\\Electric Vehicle Charging}}
\author{%
Daniel R. Jiang and Warren B. Powell%
}
\maketitle
\vspace{-2em}
\begin{abstract}
We consider the sequential decision problem faced by the manager of an electric vehicle (EV) charging station, who aims to satisfy the charging demand of the customer while minimizing cost. Since the total time needed to charge the EV up to capacity is often \emph{less} than the amount of time that the customer is away, there are opportunities to exploit electricity spot price variations within some reservation window. We formulate the problem as a finite horizon Markov decision process (MDP) and consider a risk-averse objective function by optimizing under a dynamic risk measure constructed using a convex combination of expected value and conditional value at risk (CVaR). It has been recognized that the objective function of a risk-averse MDP lacks a practical interpretation. Therefore, in both academic and industry practice, the dynamic risk measure objective is often not of primary interest; instead, the risk-averse MDP is used as a \emph{computational tool} for solving problems with predefined ``practical'' risk and reward objectives (termed the \emph{base model}). In this paper, we study the extent to which the two sides of this framework are compatible with each other for the EV setting --- roughly speaking, does a ``more risk-averse'' MDP provide lower risk in the practical sense as well? In order to answer such a question, the effect of the degree of dynamic risk-aversion on the optimal MDP policy is analyzed. Based on these results, we also propose a principled approximation approach to finding an instance of the risk-averse MDP whose optimal policy behaves well under the practical objectives of the base model. Our numerical experiments suggest that EV charging stations can be operated at a significantly higher level of profitability if dynamic charging is adopted and a small amount of risk is tolerated.


\end{abstract}


\onehalfspacing
\setlength{\abovedisplayskip}{0.2cm}
\setlength{\belowdisplayskip}{0.2cm}

\section{Introduction}
\label{sec:intro}
The recent popularity of electric vehicles (EVs) has spurred significant research attention in the area of EV storage management from a number of perspectives. One stream of literature, e.g., \cite{Roe2009}, \cite{Clement-Nyns2010}, \cite{Sundstr2010}, \cite{Sortomme2011}, \cite{Rotering2011}, \cite{Gan2013}, \cite{Li2014}, \cite{Wei2014}, takes an aggregate view of EVs and provides possible solutions to the negative impacts of an overloaded electrical infrastructure. Broadly speaking, the literature considers both coordinated and decentralized scheduling approaches to accomplish \emph{peak-shifting}, i.e., flattening the load profile due to EV charging in order to reduce stress on the grid. 

A complementary line of research deals with EV charging policies at the level of individual charging stations, focusing on the optimization of metrics such as charging cost and quality of service (minimal blocking/waiting times in queue-based models). For example, \cite{Zhang2014} integrates a queueing approach with Markov decision process (MDP) theory to optimize mean waiting times under a cost constraint. In \cite{Bayram2011}, \cite{Koutsopoulos2011}, and \cite{Karbasioun2014}, the authors propose optimal operating strategies for the setting where a charging station is paired with storage. The papers \cite{Bashash2010} and \cite{Bashash2011} use a detailed model of a lithium ion battery in order to optimize under the objective of cost of electricity and battery health degradation. The uncertainty in the use of EVs, i.e., the driving pattern of the customer, is taken into account in the charge management policy proposed by \cite{Iversen2014}.

\vspace{7pt} \noindent \textbf{Motivation.} The motivation behind the problem examined in this paper is new, but most closely related to the second stream of literature dealing with individual charging stations. Here, we consider the point of view of a firm that owns charging stations located in public areas, such as parking garages, gas stations, or hotels. The objective is to manage the charging process for a single charger and a single vehicle within the reservation window so that energy costs are low when subjected to stochastic spot prices. When the spot price is high, the manager has an incentive to delay charging until the price falls (at the risk of even higher prices in the future or the inability to fully charge the vehicle).

In order to reap the environmental and societal benefits that come with the widespread adoption of electric vehicles, a dense network of charging stations located nationwide is required, but such infrastructure is not possible without an economic incentive. Unfortunately, there is concern that such an incentive for owning and operating EV infrastructure may not exist in the current climate \citep{Schroeder2012,Chang2012,Robinson2014}. Methods to make ownership of EV charging stations a more viable business model are thus a crucial area of study.

Reservation-based EV charging stations have recently grown in popularity as a way to increase efficiency and decrease range anxiety (i.e., the fear of being stranded away from a charging station with an empty battery). The implementation of reservation-based systems may have also stemmed from social and etiquette considerations: \cite{Caperello2013} suggest that the lack of well-understood behavioral guidelines (e.g., when is it acceptable to unplug another vehicle that is finished charging?) can inhibit the use of public charging stations due to the increased uncertainty. Through surveys, they find that many EV drivers prefer a reservation system so as to mitigate this uncertainty. Since 2011, reservation capabilities at public charging stations have been made available via smart-phone applications (see, e.g., ChargePoint, E-Charge HK in Hong Kong, and NextCharge). Moreover, the rise of the ``sharing economy'' has  led to innovative peer-to-peer EV charging solutions such as Chargie,\footnote{\texttt{https://www.chargie.net/}} where users can make reservations for time on privately owned charging stations.

\vspace{7pt} \noindent \textbf{Problem Overview.} The typical scenario (see Figure \ref{fig:events}) in our model includes the following sequence of events: (1) a customer reserves a charging window $[0,\tau]$ at some time $t < 0$, where $\tau$ can be longer than the time required for a full charge; (2) she drops off the EV at the charging station at $t=0$; (3) the vehicle is (dynamically) charged during the interval $[0,\tau]$; (4) the customer returns at the end of the reservation window, with the expectation that the vehicle is adequately charged (if not, she given an \emph{inconvenience compensation}). From the point of view of the overall model, the reservation length $\tau$ is random variable. Since the dynamic charging policy from $[0,\tau]$ is determined after $\tau$ is known, a family of optimal control problems parameterized by $\tau$ are eventually solved. The theoretical analysis of the control problem, however, is always for a given $\tau = T$.

\begin{figure}[h]
        \centering
         \includegraphics[width=0.8\textwidth]{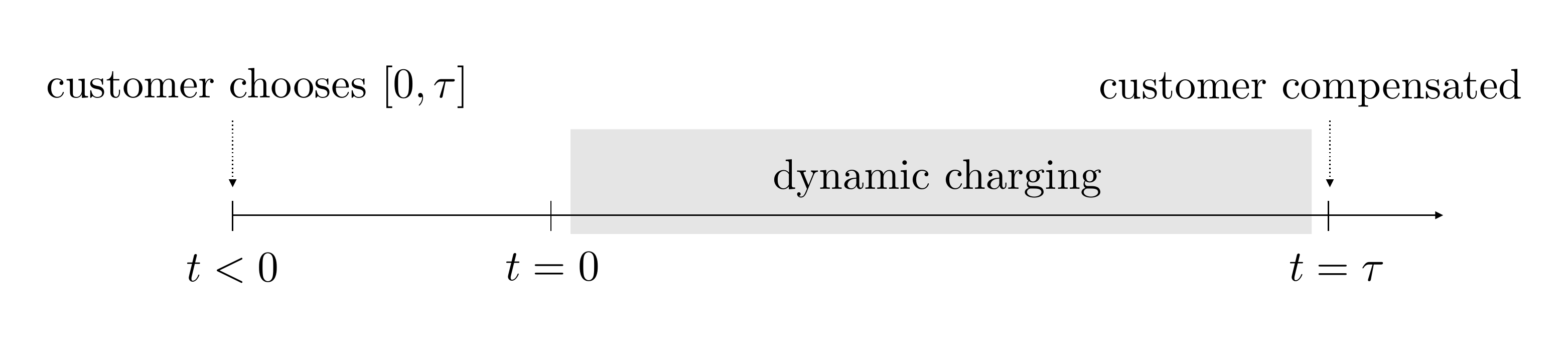}
         \vspace{-10pt}
        \caption{Sequence of Events}
        \label{fig:events}
\end{figure}

In a 2016 patent application by Google describing ``mediator device for smart electric vehicle charging,'' this exact situation is described as follows:
\begin{displayquote}
Modern EVSEs are equipped with technology that allows them to communicate with electric vehicles to control the charging rate. This capability allows the charging station to communicate the amount of charging capacity available to the EV\ldots \, However, in many cases it is not necessary for the EV to charge the battery at the full rate. For example, EV drivers may drive their car[s] to work and leave [them] plugged in all day, even though charging completes in only a few hours. \citep{Wytock2016}
\end{displayquote}
We note that if the EV charging station is run on a first-come, first-served basis without reservations, our model could also be applicable if the smart-phone application asks the customer for an anticipated return time $\tau$. The model is also flexible enough to exist within a larger strategy: for example, if the firm prefers to guarantee a full charge, then the interval $[0,\tau]$ could be split so that dynamic charging occurs first using our model, before switching to a static charging policy at the end.


The decision and risk analysis problem here is to balance the ``risk'' of the customer returning to an undercharged vehicle and the ``reward'' of completing a satisfactory charge at a reasonable cost. The extent to which this is an interesting problem depends largely on the power output of the charging station. For example, residential charging is typically quite slow and there is little opportunity to dynamically adjust the charging rate: Level 1 charging provides around 1.4 kW of output (a few miles per hour of charge) and Level 2 charging provides around 7.7 kW (or 20 miles per hour of charge).


However, technological improvements have resulted in the availability of high-powered chargers and it is now increasingly common for the customer to be away for a length of time that is \emph{longer} than the amount of time needed to fully charge the EV. DC fast charging stations currently provide between 50 kW and 120 kW of power (see, e.g., Tesla's Supercharger), providing 180 to 240 miles per hour of charge. As of 2017, a new tier of charging stations, billed as ``ultra-fast'' DC charging,\footnote{https://www.chargepoint.com/products/commercial/express-plus/}\footnote{https://www.engadget.com/2017/07/16/porsche-installs-first-super-fast-ev-chargers/} are slowly becoming available and can operate at a power rating of 350 to 400 kW. Given the high costs of the DC fast charging technology, estimated at \$65{,}000--\$70{,}000 compared to \$15{,}000-\$18{,}000 for a Level 2 charger \citep{U.S.DepartmentofEnergy2012} (see also \cite{BayAreaCouncil2011} and \cite{Schroeder2012} for similar estimates), it is clear that optimal control policies can help ease the economics of such an investment. Indeed, our conversations with industry colleagues confirm that this problem is faced by owners of EV charging stations today.

\vspace{7pt} \noindent \textbf{The Importance of Risk.} We suppose that the firm can directly interact with a spot energy market. The optimal policy depends on the spot prices of electricity $P_t$ in the following way: if $P_t$ is high, then we have a possible incentive to delay charging and buy later at a lower price. Of course, the trade-off here is that whenever charging is delayed, the risk of an inconvenienced customer (one who returns to a partially charged vehicle) increases. We assume in this paper that an inconvenienced customer is provided a financial compensation for her inconvenience when a vehicle is not charged adequately. The compensation includes a refund plus an additional spot price-dependent amount to cover the cost of purchasing the energy elsewhere. Therefore, the cost function used in our model is of the form
\[
\text{total cost} = \text{cost of energy} + \text{inconvenience compensation},
\]
which we emphasize is a \emph{true financial cost} to the firm. Although a risk-neutral model (i.e., one that optimizes the expectation of the total cost) makes a trade-off between the ``cost of energy'' and the ``inconvenience compensation'' in order to minimize total cost, the following thought-experiment reveals that there are additional preferences that may not be captured under risk-neutrality: given two operating policies $\pi_1$ and $\pi_2$ that achieve the same expected total cost, where $\pi_1$ focuses on a low ``inconvenience cost'' at the expense of a high ``cost of energy'' and $\pi_2$ focuses on the opposite, which policy does one prefer? Since an inconvenienced customer may come with additional externalities (e.g., negative word of mouth, lost future sales) that are difficult to quantify, it is likely that $\pi_1$ is the preferred policy. This hints at the existence of \emph{risk-aversion} toward events where the customer is inconvenienced.


Our proposed model uses the framework of risk-averse MDPs, introduced in \cite{Ruszczynski2010}, to capture the important issue of sequential risk in this problem. In this framework, the cost function remains unchanged: it continues to represent the \emph{true financial cost} to the firm, as discussed above, but we replace the expected value operator with a \emph{dynamic risk measure} \citep{Frittelli2004,Riedel2004,Pflug2005,Boda2006,Cheridito2006,Acciaio2011} in order to account for uncertain outcomes in a different way. When a \emph{time-consistency} property holds for the dynamic risk measure (see \cite{Cheridito2006} and \cite{Ruszczynski2010}), the risk-averse objective function for a finite horizon $T$ becomes: 
\begin{equation}
\min_{\pi \in \Pi} \; C_0^\pi + \rhooperator_0 \bigl ( C_1^\pi + \rhooperator_1 \bigl( C_2^\pi + \cdots + \rhooperator_{T-1}( C_T^\pi ) \cdots \bigr) \bigr),
\label{eq:obj1}
\end{equation}
where $\Pi$ is a set of policies, $\{C_t^\pi\}$ are costs incurred by following policy $\pi$, and $\{\rhooperator_t\}$ are one-step coherent risk measures (i.e., components of the overall dynamic risk measure). A major computational advantage of this representation is that the familiar Bellman recursion applies for the value functions of the optimal risk-averse policy, allowing for established solution techniques to be adapted to the risk-averse setting. We provide a theoretical analysis of the \emph{impact of risk-aversion} on the structure of the optimal policy for our EV charging problem. Related work to ours include risk-averse single and multi-stage newsvendor models \citep{Bouakiz1992,Eeckhoudt1995,Martinez-De-Albeniz2006,Gotoh2007,Ahmed2007,Chen2009,Choi2011} and risk-averse joint-inventory and pricing models \citep{Chen2007,Yang2016}. For single stage problems, the risk metrics used in these papers include conditional value at risk (CVaR) \citep{Rockafellar2000}, exponential utility, and mean-variance, while for multi-stage problems, they employ either additive utility functions or dynamic (coherent) risk measures. \cite{Choi2011} discuss the merits of \emph{translation invariance} and \emph{positive homogeneity}, two properties that our choice of methodology, dynamic coherent risk measures, possesses which the utility function framework does not.

\vspace{7pt} \noindent \textbf{Interpretability of the Risk-Averse Objective.} In this paper, we address the issue of \emph{interpretability} of (\ref{eq:obj1}) in a new way. It has been acknowledged that the main obstacle to the practicality of dynamic risk measures in MDPs is the lack of a clear economic interpretation \citep{Rudloff2014,Iancu2015}. If $\rhooperator_t = \mathbf{E}_t$ for all $t$, then (\ref{eq:obj1}) is equivalent to a quantity familiar to any practitioner, the expected total cost: $\min_\pi \mathbf{E} \bigl[ \sum_t C_t^\pi \bigr]$. On the other hand, if $\rhooperator_t$ is chosen to be, say, CVaR, then (\ref{eq:obj1}) is evaluated with a nested version of CVaR \citep{Cheridito2009a} and does not further simplify into a practical quantity. Nevertheless, the theoretical grounding and associated computational advantages of the framework \citep{Ruszczynski2010,Philpott2012,Philpott2013} makes it an attractive model for many: in fact, the Brazilian National System Operator (ONS) has \emph{operationalized} a monthly hydrothermal planning strategy based on an MDP under nested mean-CVaR \citep{Maceira2015}. To account for the interpretability issue, it is very common in both the literature and in practice to evaluate risk-averse policies generated by a risk-averse MDP along two dimensions, \emph{practical risk} and \emph{practical reward}, which may or may not be directly related to the cost function of the MDP. See Figure \ref{fig:practical} for an illustration of this approach. A few illustrative examples are listed below:
\begin{itemize}
\item \cite{Philpott2012} solve a hydro-thermal scheduling problem under a nested mean-CVaR risk measure. The practical reward is taken to be the (negative) ``total operational cost'' and the practical risk is the ``number of violations where the national storage level is below a minimum threshold.'' The authors explicitly acknowledge the discrepancy by stating that the dynamic risk measure objective ``is arguably different from focusing on the distribution of the total annual cost, and controlling the extent of its upper tail.''
\item \cite{Shapiro2013} optimize the Brazilian interconnected power system under a nested mean-CVaR risk measure, but effectiveness of the policy is gauged using the ``expected cost'' versus the ``90\%, 95\%, and 99\% quantiles of the cost.''
\item \cite{Cavus2014a} compute risk-averse policies under a nested mean-semideviation risk measure for a credit limit decision problem and evaluates them by observing the profit distribution, where the implicit trade-off is between ``expected profit'' and ``frequency of negative profits.''
\item \cite{Lin2015} solve the \cite{Almgren2001} trade execution model under a general dynamic measure of risk. The numerical work evaluates the policies on the practical metrics of (negative) ``mean cost'' versus the ``mean cost of the 30\% worst samples.''
\item \cite{Maceira2015} use a nested mean-CVaR for the hydro-planning MDP, but chooses the practical reward to be (negative) ``expected value of operational cost'' and practical risk to be ``expected value of energy not supplied.''
\item \cite{Collado2017} consider a stochastic path detection problem under nested mean-semideviation. The practical trade-off being considered is the ``expected non-detection probability'' versus the ``variance of the non-detection probability.''
	\end{itemize}
We emphasize that in each of these papers, the approach of Figure \ref{fig:practical} is used. Although it is related, the dynamic risk measure objective used to find the policy does not match the practical objectives used for evaluation. Thus, we can view the risk-averse MDP as a \emph{computational tool} for generating ``risk-averse'' policies that solve another model, which we term the \emph{base model}. Based on trade-offs studied in the papers listed above, it is reasonable to specify a base model of the form 
\begin{equation}
\displaystyle \text{maximize}_{\pi} \; \; [\text{Practical Reward}](\pi) \; \; \text{subject to} \; \; [\text{Practical Risk}](\pi) \le \varepsilon,
\end{equation}
where $\pi$ is an operating policy and $\varepsilon$ is a parameter that controls the risk. Hence, the approach under discussion is when $\pi$ is chosen to be the solution of a risk-averse MDP.

\begin{figure}[h]
        \centering
                \includegraphics[width=0.85\textwidth]{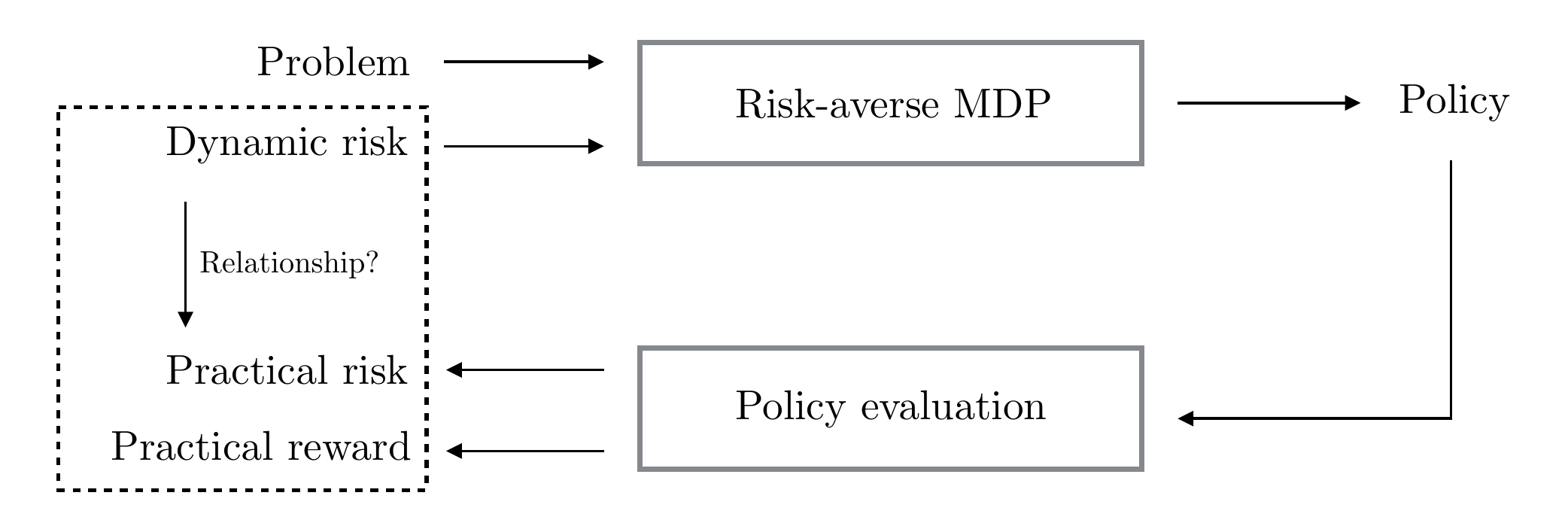}

        \caption{Evaluation using Practical Metrics of Risk and Reward}
        \label{fig:practical}
\end{figure}
We note that this approach is typically not applied in a rigorous manner; the relationship between risk-aversion in the dynamic risk measure sense and the practical risk and reward is generally not well-understood. Our paper poses the question: does increased risk-aversion in the risk-averse MDP lead to lower practical risk in the base model as well? Such a property is desired as it indicates a \emph{compatibility} between the risk-averse MDP and the practical objective. Without such a compatibility, the motivation for utilizing the risk-averse MDP model is quite weak. In this paper, we consider this question in the context of the dynamic EV charging problem along with several practical metrics that are of interest to industry. We show conditions under which compatibility holds for our model.

In current practice, the decision-maker is often limited by a computing budget and thus heuristically selects a few MDPs to solve. If one of the resulting policies generates an acceptable trade-off between practical risk and reward, then it is chosen for implementation; see, e.g.,\cite[Tables 6-7]{Philpott2012}, \cite[Figure 11]{Shapiro2013}, \cite[Table 8]{Cavus2014a}, and \cite[Figure 6]{Maceira2015} for examples of this general ad-hoc approach. Ideally, the decision-maker should investigate a larger set of risk-averse MDP policies. Along these lines, we offer a method of approximating the practical risk and reward for a spectrum of risk-averse policies using \emph{polynomial optimization and sum of squares constraints} \citep{Ahmadi2015}. The idea is that by solving a small number of risk-averse MDPs, our structural results allow us to reliably approximate the performance of an entire set of risk-averse optimal policies, providing practitioners an accessible means to explore a wider range of risk preferences. We then take advantage of the approximations to provide a systematic method for selecting a policy corresponding to a risk-averse MDP by analyzing the trade-offs of the practical metrics.


\vspace{7pt} \noindent  \textbf{Our Results.} The main contributions of this paper, some of which are of general interest beyond the EV application, are summarized below.
\begin{itemize}
\item We present a novel risk-averse, sequential model of the EV charging problem. The model is formulated using the framework of risk-averse MDPs with dynamic risk measures, constructed using one step risk measures that are a convex combination of expectation and conditional value at risk.
\item We analyze the structural properties of the optimal policy. A main result here states that the charging thresholds of the optimal policy are nonincreasing in the current spot price (under a spot-price model incorporating seasonality, mean-reversion, and jumps) for any degree of dynamic risk-aversion.
\item Related to the above results, we find conditions under which our risk-averse MDP is \emph{compatible} (a notion that we will define) with a class of practical metrics of risk and reward when the approach of Figure \ref{fig:practical} is adopted. This can be taken as justification that the use dynamic risk measures in risk-averse MDPs is an appropriate methodology in the case of the dynamic EV charging problem we present here.
\item We propose an approximation strategy based on regression and polynomial optimization in order to guide the policy selection process, i.e., we address the question: how do we choose a risk-averse MDP such that the resulting optimal policy behaves well under the practical metrics?
\item Finally, we demonstrate the proposed methods on a case study using spot price data from the California ISO (CAISO) and parking time distributions estimated from public garages in Santa Monica, California. 
\end{itemize}

The paper is organized as follows. First, we present in Section \ref{sec:mathematicalformulation} the mathematical model of the problem. In Section \ref{sec:structure}, we provide a structural analysis of the model. Next, in Section \ref{sec:compat}, we analyze 
the relationship of the policy to practical metrics of risk and reward and identify when our notion of compatibility holds. Lastly, the case study and the associated numerical work is presented in Section \ref{sec:num}.







\section{Mathematical Formulation}
\label{sec:mathematicalformulation}
In this section, we first give a mathematical formulation of the dynamic EV charging problem. We then proceed to give a brief overview of dynamic risk measures in MDPs, along with the objective function used in the base model and objective used in the risk-averse MDP.

\subsection{Dynamic EV Charging Model}
\label{sec:chargingmodel}
Each realization $T$ of the reservation length $\tau$ and each risk preference $\beta$ defines a sequential decision problem of length $T+1$, and in this paper, we are interested in the entire class of problems parameterized by $T$ and $\beta$ (hence, many relevant quantities will be indexed by both $T$ and $\beta$). In this section and Section \ref{sec:structure}, we define and analyze the structure of the MDP under a fixed $T$ and $\beta$. Next, in Section \ref{sec:compat}, we characterize the relationship between MDPs with varying $\beta$ and also seek to understand the how the optimal policies behave when evaluated using practical metrics of risk. The full distribution of $\tau$ is then taken into account in the numerical experiments of Section \ref{sec:num}.

In our MDP, charging decisions are made at $t \in \{0,1,\ldots, T-1\}$, which are measured in units of 15 minutes. The time-based access fee to the charging station, paid by the customer per 15 minutes, is denoted $c_f$. The customer returns at time $T$ and if applicable, receives an inconvenience compensation at time $T+1$ if an inadequate charge was provided (see Section \ref{subsec:compensation}). At every period $t$, our charging decision is a rate $x \in [0,\xmax]$ and using normalized units, we assume that $x$ units of energy are obtained between $t$ and $t+1$. Let $R_t \in [0,\Rmax] =: \mathcal R$ be the amount of energy in storage at time $t$, where the \emph{transition function} for $R_t$ is given by $R_{t+1} = R_t + x_t$, provided that the charging decision $x_t$ is chosen in the constraint set $\mathcal X(R_t) = \{x:0 \le x \le \min\{ \Rmax - R_t, \, \xmax \}\}$. 

Given our previous discussion on the wide range of charging station power ratings (1.4 kW to 400 kW), we note that it is increasingly likely that the maximum charge per period $\xmax$ is larger than $\Rmax$. This paper will explicitly consider two parameter regimes: (1) the \emph{general charging regime} where $\xmax$ is unrestricted and (2) the \emph{fast charging regime} where $\xmax$ is larger than $\Rmax$. We note that $\xmax$ depends on both the power output of the charging station and the length of a decision epoch; thus, the ``fast charging regime'' is a slight misnomer in that it arises when either the power output is high \emph{or} the length of a decision epoch is large. The reason for the distinction between the two parameter regimes is that certain theorems (the results of Section \ref{sec:compat}) only hold in the fast charging case. Unless specifically noted, we consider the general charging regime, where no assumptions are made on $\xmax$.

\subsubsection{Spot Price Dynamics} Now let us specify the dynamics of the spot price process $\{P_t\}$. Electricity prices exhibit several characteristics that should be captured: seasonality, mean-reversion, jumps, and potentially negative prices \citep{Eydeland2003,Kim2011,Zhou2015}. Because many conversations regarding dynamic EV charging technology are occurring in California, we obtained electricity price data from the California (CAISO) fifteen-minute market. These data, as shown in Figure \ref{fig:caiso}, exhibits frequent negative prices but relatively light tails when compared to other markets (see, e.g., the five-minute NYISO real-time market). 

\begin{figure}[h]
        \centering
                \includegraphics[width=0.8\textwidth]{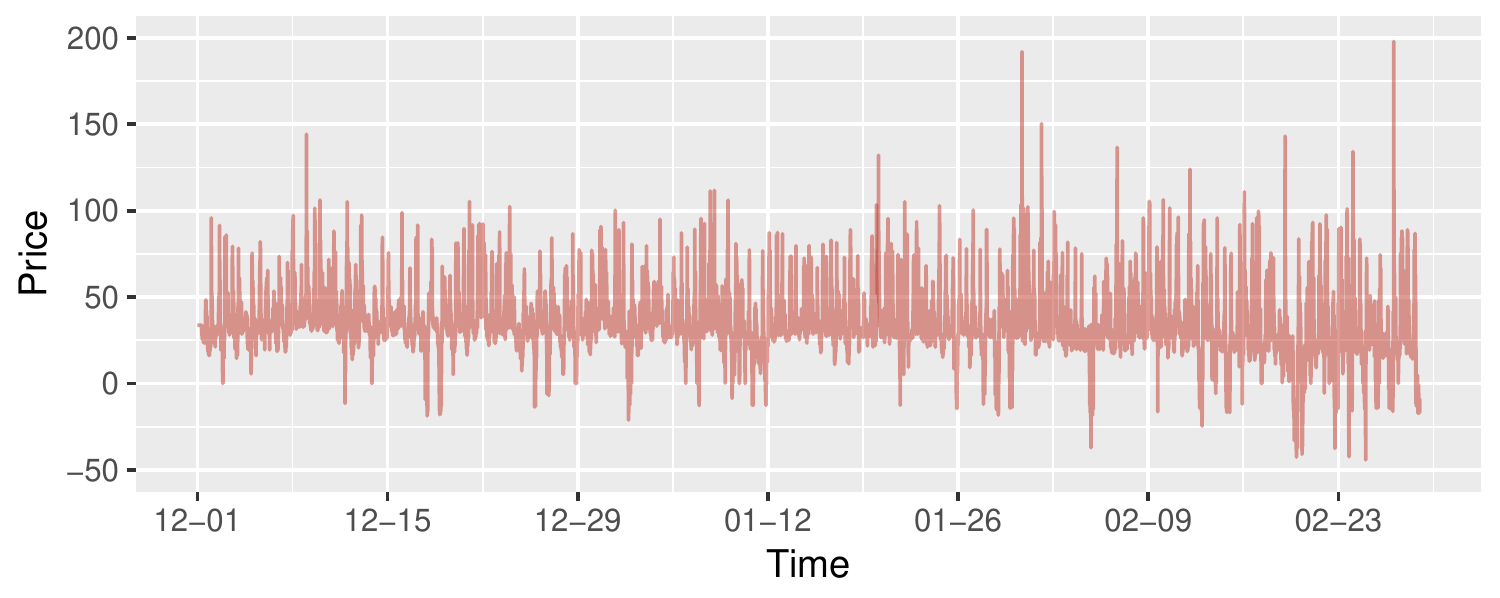}

        \caption{Historical Fifteen-Minute CAISO Prices, Winter 2016-2017}
        \label{fig:caiso}
\end{figure}

Given these observations and the discrete-time decision problem, a reasonable choice to accommodate for the negative prices is to employ a discrete-time version of the jump-diffusion model of \cite{Cartea2005} to directly fit the spot prices, rather than the log spot price. The continuous-time model is given by $P^\textnormal{c}_t = g(t) + Y^\textnormal{c}_t$. Here, $g(t)$ is a deterministic (sinusoidal) seasonality function and $Y_t^\textnormal{c}$ consists of a mean reverting component and a jump component; the dynamics are given by $dY^\textnormal{c}_t = \kappa_Y \, (\mu_Y - Y^\textnormal{c}_t) \, dt + \sigma_Y \, dW_t + J dN_t$, where $\kappa_Y >0$ is the mean reversion parameter, $\theta_Y$ is the long term mean of the process, $\sigma_Y > 0$ is the volatility, $W_t$ is a standard Brownian motion, $J$ is a normally distributed jump size with mean $\mu_J$ and variance $\sigma_J^2$, and $N_t$ is a Poisson process with rate $\lambda$ (all independent from one another). 

The discrete-time approximation $P_t$ of the continuous-time model at resolution $\Delta t$ can be obtained by letting 
\begin{equation}
P_t = g(t) + Y_t \quad \text{and} \quad Y_{t+\Delta t} = Y_t \, e^{-\kappa_Y  \Delta t} + \mu_Y \, (1- e^{-\kappa_Y \Delta t}) + \xi_{t+\Delta t} + X_{t+\Delta t} \, J_{t+\Delta t},
\label{eq:discretetimeou}
\end{equation}
where $\{\xi_t\}$ is a sequence of iid $\mathcal N \bigl(0,\sigma_Y^2/(2\kappa_Y) (1-e^{-2\kappa_Y \Delta t}) \bigr)$ random variables, $\{X_t\}$ is a Bernoulli process with parameter $\lambda \, \Delta t$, and $\{J_t\}$ is a sequence of iid $\mathcal N(\mu_J, \sigma_J^2)$ jumps (a Bernoulli process approximation of the Poisson process, which assumes at most one jump per period). $\xi_{t+\Delta t}$, $X_{t+\Delta t}$, and $J_{t+\Delta t}$ are mutually independent. We utilize (\ref{eq:discretetimeou}) with $\Delta t = 1$ for the remainder of this paper and assume that each time step $\Delta t$ is 15 minutes. Let $(\Omega, \mathcal F, \mathbf{P})$ be the probability space on which all random variables are defined, where $\{\xi_t\}$, $\{X_t\}$, and $\{J_t\}$ are adapted to a filtration $\{\mathcal F_t\}_{t=0}^T$, with $\{\varnothing,\Omega\} = \mathcal F_0 \subseteq \mathcal F_1 \subseteq \cdots \subseteq \mathcal F_T \subseteq \mathcal F$.

Note that by (\ref{eq:discretetimeou}), we can define a random variable
\begin{equation}
\psi_{t+1} = g(t+1)-g(t) \, e^{-\kappa_Y} + \mu_Y \, (1- e^{-\kappa_Y}) + \xi_{t+1} + X_{t+1} \, J_{t+1},
\label{eq:psidef}
\end{equation}
which does not depend on the current spot price $p$, such that $p \, e^{-\kappa_Y} + \psi_{t+1}$ has the same distribution as $(P_{t+1} \, | \, P_t = p)$. Similarly, we define $\psi_{t+1,Y} = \psi_{t+1} - g(t+1)$, also independent of the current spot price $p$, such that $p \, e^{-\kappa_Y} + \psi_{t+1,Y}$ has the same distribution as $(Y_{t+1} \, | \, P_t = p)$. Both $\psi_{t+1}$ and $\psi_{t+1,Y}$ are distributed as a mixture of normals. Throughout this paper, we use the notation $P_{t+1}(p)$ to represent a random variable equal in distribution $(P_{t+1} \, | \, P_t=p)$ and $Y_{t+1}(p)$ to represent $(Y_{t+1} \, | \, Y_t = p-g(t))$ in order to clarify certain expressions. Calibration details and the resulting model parameters are discussed in Section \ref{sec:num}.

\subsubsection{Customer Compensation} 
\label{subsec:compensation}
To properly assess the trade-offs in our model, we need to specify the compensation structure for a customer who returns to a vehicle with charge level $R_T$. Because our current optimal control problem has only seen preliminary discussion \citep{Wytock2016}, there is no concrete, implemented example of how (and if) customers should be compensated in the case of a partial charge. Here, we present one reasonable choice that has arisen in discussion and remark that alternative structures satisfying similar convexity and monotonicity properties can be introduced into the model without significant difficulty.

Under a standard charging policy, the customer expects her vehicle to either be fully charged up to $\Rmax$ or has received a continuous charge for $T$ periods, i.e., charged up to $R_0 + T  \xmax$. Therefore, the energy shortage is $h(R_T) = [\min(R_0 + T  \xmax,\Rmax) - R_T]$. Let $p_\text{ref}$ be a deterministic amount that represents the retail price per unit of energy, so that the customer is entitled to a refund of at least $h(R_T) \, p_\text{ref}$. The compensation also includes two additional payments, an \emph{inconvenience amount} of $\gamma_h$ per unit of shortage and a \emph{market-dependent amount} of $\gamma_Y(P_{T+1}-g(T+1))$ per unit of shortage for some \emph{positive, increasing} function $\gamma_Y$ (see Assumption \ref{ass:compensationlip}). The idea here is that if the shortage occurs in a time of high market prices (i.e., relative to the seasonality function), then the customer receives an additional payment to cover her costs at period $T+1$. The total inconvenience compensation at time $T+1$ is given by:  
\[
c_{T+1}(S_T, P_{T+1}) = [1+\gamma_h\, h(R_T) + \gamma_Y(P_{T+1}-g(T+1))] \, h(R_T) \, p_\text{ref}.
\]
The positivity of $\gamma_Y$ is motivated by the notion that we should never reduce compensation payments to the customer, even in times of low or negative prices.

\subsubsection{One-Period Cost Function}

The \emph{state variable} for this problem is $S_t = (R_t,P_t)$, which takes values in the state space $\mathcal S = \mathcal R \times \mathbb R$.  We are now ready to define the cost function, which is given by $c_t(S_t, x_t) = (x_t P_t - c_f) \, \indicate{t < T}$ for $t=0, 1, \ldots, T$ and the terminal cost is $c_{T+1}(S_T, P_{T+1})$, as defined above. We emphasize that both the cost of energy (for $t < T$) and the inconvenience compensation (for $t=T+1$) should be viewed as \emph{actual financial costs} to the firm, and should not be interpreted to be artificial penalties intended to generate risk-averse policies. All risk preferences are added after the model of the problem is defined.

Let $\{x_0^\pi, x_1^\pi,\ldots,x_{T-1}^\pi\}$ be a charging policy (indexed by $\pi$) where $x_t^\pi: \mathcal S \rightarrow [0,\xmax]$ is the decision function at time $t$. When following a policy $\pi$, we produce a sequence of states $S_0^\pi, S_1^\pi, \ldots, S_{T}^\pi$ and obtain the costs $C_t^\pi = c_{t}(S_{t}^\pi, x_{t}^\pi(S_{t}^\pi))$, for $t=0,1,\ldots, T$ and $C_{T+1}^\pi = c_{T+1}(S_T^\pi,P_{T+1})$ (note that $C_T^\pi = 0$). Let $R_t^\pi$ be the level of charge at time $t$ if a policy $\pi$ is followed. Let $\mathbf{S}^\pi_T = (S^\pi_0, S^\pi_{1}, \ldots, S^\pi_{T+1})$ be the vector representing the sequence of states visited under policy $\pi$ with $S_t^\pi = (R_t^\pi, P_t)$.

\subsection{The Base Model \& Practical Metrics}
\label{sec:prac}
 We now define a broad class of \emph{practical reward} and \emph{practical risk} metrics, along with a few motivating examples. Let $f^+_T$ be a function mapping a state vector for the problem of horizon $T$ into a ``reward outcome,'' so that the quantity $f^+_T(\mathbf{S}^\pi_T)$ is the random variable describing the practical reward of policy $\pi$. Similarly, define a function $f^-_T$ so that $f^-_T(\mathbf{S}^\pi_T)$ is the practical risk of policy $\pi$. 
Finally, let $r_\text{base}$ and $\rho_\text{base}$ be monotone mappings from random variables into $\mathbb R$ (e.g., $\mathbf{E}$ or $\text{VaR}_{0.9}$) so that our metrics are given by
\begin{equation*}
\textnormal{Practical Reward} = r_\text{base}\,[f^+_\tau(\mathbf{S}^\pi_\tau)] \quad \text{and} \quad \textnormal{Practical Risk} = \rho_\text{base}\,[f^-_\tau(\mathbf{S}^\pi_\tau)].
\end{equation*}
Note that we have replaced $T$ with the random variable $\tau$, in order to incorporate the uncertainty of the reservation horizon.

Although we have given a general definition above so that this framework can be applied in other application settings, in the EV charging problem, our primary goal is to use dynamic charging to reduce the cost of energy. Thus, the most sensible form of the practical reward is given by $r_\text{base} = \mathbf{E}$ and
\begin{equation}
 f_\tau^+(\mathbf{S}^\pi_\tau) = c_f \, \tau - \sum_{t=1}^\tau (R_t^\pi - R_{t-1}^\pi) \, P_t - [1+\gamma_h\, h(R^\pi_\tau) + \gamma_Y(Y_{\tau+1})] \, h(R^\pi_\tau) \, p_\text{ref},
\label{eq:practicalreward}
\end{equation}
 so that $r_\text{base}\,[f^+_\tau(\mathbf{S}^\pi_\tau)]$ represents the negative of the expected cost. On the other hand, any metric that we might reasonably want to minimize could be used for the practical risk. A few examples are given below.
\begin{itemize}
\item[--] $\rho_\text{base} = \mathbf{E}$ and $f_\tau^-(\mathbf{S}^\pi_\tau) = [1+\gamma_h\, h(R^\pi_\tau) + \gamma_Y(Y_{\tau+1})] \, h(R^\pi_\tau) \, p_\text{ref}$ gives the expected inconvenience compensation.
\item[--] Similarly, one could also consider letting $\rho_\text{base} = \text{VaR}_{0.9}$ and keeping $f_\tau^-$ as defined above so that the practical risk is the $90^{\text{th}}$ quantile of the inconvenience compensation.
\item[--] $\rho_\text{base} = \mathbf{E}$ and $f_\tau^-(\mathbf{S}^\pi_\tau) = \mathbf{1}_{\{R_\tau / \Rmax \, \le \, (1-\delta)\}}$ represents the probability that $R_\tau$ is less than some threshold of being full.
\item[--] $\rho_\text{base} =\text{CVaR}_{0.90}$ and $f_\tau^-(\mathbf{S}^\pi_\tau) = h(R^\pi_\tau)$ represents the average amount of shortage in the 10\% worst cases.
\end{itemize}
Note that each of these risk metrics have a direct economic interpretation that can be useful in practice. We define the \emph{base model} to be
\begin{equation}
\displaystyle \text{maximize}_\pi \; \; r_\text{base}[f^+_\tau(\mathbf{S}^\pi_\tau)] \; \; \text{subject to} \; \; \rho_\text{base}[f^-_\tau(\mathbf{S}^\pi_\tau)] \le \varepsilon,
\label{eq:tradeoffproblem}
\end{equation}
for some ``trade-off'' parameter $\varepsilon$. Constrained MDPs, such as (\ref{eq:tradeoffproblem}), are known to be very difficult to solve in general (see \cite{Altman1999} for an overview), so many authors resort to using a risk-averse MDP to approximate the base model. To reiterate our previous discussions, we are interested in examining a notion of compatibility when policy evaluation is performed using (\ref{eq:tradeoffproblem}) and policy optimization is done using a risk-averse MDP. 

\subsection{The Risk-Averse MDP Model}
\label{sec:dynamicmdp}
In this section, we discuss the risk-averse MDP model of \cite{Ruszczynski2010}, which serves as a surrogate to the computationally intractable base model given in (\ref{eq:tradeoffproblem}). The eventual goal of this paper is to search over a parameterized class of MDPs in order to obtain an acceptable solution to (\ref{eq:tradeoffproblem}).

The objective function of a risk-averse MDP is specified using a time-consistent dynamic risk measure $\rhooperator_{0,T}$ of the \emph{sequence} of downstream costs $C^\pi_0, C^\pi_1, \ldots, C^\pi_{T+1}$ generated by a policy $\pi$. The objective is written using the notation $\rhooperator_{0,T}(C^\pi_0,C^\pi_1, \ldots, C^\pi_{T+1})$; in the risk-neutral case, we would have $\rhooperator_{0,T}(C^\pi_0,C^\pi_1, \ldots, C^\pi_{T+1}) = \mathbf{E}\Bigl[\sum_{t=0}^{T+1} C^\pi_t  \Bigr]$, but in general, we cannot assume such an additive form. The notion of time-consistency can be summarized as: if we prefer one stream of costs at some time in the future, and the costs from now until then are identical, then we must also prefer the same stream of costs \emph{today}; see \cite{Ruszczynski2010} for a precise definition. Let $\mathcal Z_t$ be the space of $\mathcal F_t$ measurable random variables. It can be shown, under some mild assumptions, that time-consistency results in the nested formulation
\begin{align}
\rhooperator_{0,T}(C^\pi_0,&C^\pi_1,\ldots, C^\pi_{T+1}) = C^\pi_0+ \rhooperator_0(C^\pi_1 + \rhooperator_1(C^\pi_2+ \cdots + \rhooperator_{T}(C^\pi_{T+1})\cdots)),
\label{eq:nestedrho}
\end{align}
for some \emph{one-step conditional risk measures} $\rhooperator_t : \mathcal Z_{t+1} \rightarrow \mathcal Z_t$.


The traditional risk-neutral optimization problem (see, e.g., \cite{Puterman}) is
\begin{equation}
\min_{\pi \in \Pi} \;  \mathbf{E}\left[\sum_{t=0}^{T+1} C^\pi_t  \right] = \min_{\pi \in \Pi} \;  C_0^\pi + \mathbf{E}_0\bigl( C_1^\pi + \cdots + \mathbf{E}_{T}( C_{T+1}^\pi ) \cdots \bigr),
\label{eq:rnobjective}
\end{equation}
where $\mathbf{E}_t$ is shorthand for $\mathbf{E}( \, \cdot \, | \, \mathcal F_t)$. We define the risk-averse optimization problem as $\min_{\pi \in \Pi} \;  \rhooperator_{0,T}(C_0^\pi, C_1^\pi,C_2^\pi, \ldots, C_{T+1}^\pi)$, where the expected sum of costs is replaced with a dynamic risk measure $\rhooperator_{0,T}$ acting on the sequence of costs.
Applying the nested formulation of (\ref{eq:nestedrho}), we see that the risk-averse formulation is a clear analog of the risk-neutral case (\ref{eq:rnobjective}) with the conditional expectations replaced with one-step risk conditional risk measures:
\begin{equation}
\min_{\pi \in \Pi} \; C_0^\pi+  \rhooperator_0\bigl( C_1^\pi + \rhooperator_1(C_2^\pi+ \cdots + \rhooperator_{T-1}( C_{T}^\pi ) \cdots )\bigr).
\label{eq:rmdp}
\end{equation}
In this paper, we follow several successful applications of risk-averse MDPs in the literature \citep{Philpott2012,Shapiro2013,Maceira2015} and take $\rhooperator_t$ to be the mean-CVaR risk measure. Every one-step conditional risk measure $\rho_t:\mathcal Z_{t+1} \rightarrow \mathcal Z_{t}$ has a ``static'' (as opposed to ``dynamic'') counterpart that maps a random variable to $\mathbb R$, instead of another random variable. To avoid confusion, we skip directly to the static case.

Define the risk measures $\textnormal{CVaR}_{\alpha_t} : \mathcal Z \rightarrow \mathbb R$ \citep{Rockafellar2000} and $\rhooperator_{\beta_t} :\mathcal Z \rightarrow \mathbb R$, where $\mathcal Z$ is a space of random variables, to be
\begin{equation}
\begin{aligned}
\textnormal{CVaR}_{\alpha_t}(X) &= \inf_u \bigl\{  u + (1-\alpha_t)^{-1} \, \mathbf{E} \bigl[ (X-u)^+  \bigr] \bigr\},\\
\rhooperator_{\beta_t}(X) &= (1-\lambda_t) \, \mathbf{E}(X) + \lambda_t \,\textnormal{CVaR}_{\alpha_t}(X),
\label{eq:cvarstaticdefn}
\end{aligned}
\end{equation}
where $X$ is a random cost in $\mathcal Z$, $\lambda_t \in [0,1]$ controls the emphasis on the tail risk, $\alpha_t \in (0,1)$ controls the length of the tail, and $\beta_t = (\lambda_t, \alpha_t)$ is the \emph{degree of risk-aversion} at time $t$. Note that $\rhooperator_{\beta_t}$ is a \emph{coherent risk measure}, as axiomatized in \cite{Artzner1999}, meaning that it satisfies the properties of convexity, monotonicity, translation invariance, and positive homogeneity. The \emph{value at risk} for some risk level $\alpha_t$ of a random variable $X$ is defined to be the $\alpha_t$-quantile, or $\textnormal{VaR}_{\alpha_t} (X) = \inf  \{ u: \mathbf{P}(X \le u) > \alpha_t \}$.


Thus, we henceforth consider the optimization problem (\ref{eq:rmdp}) with $\rhooperator_t:\mathcal Z_{t+1} \rightarrow \mathcal Z_t$ taken to be the conditional counterpart to $\rho_{\beta_t}:\mathcal Z \rightarrow \mathbb R$; see \cite{Ruszczynski2006,Ruszczynski2010} for details and examples. Let $\beta = (\lambda_0, \alpha_0, \lambda_1, \alpha_1, \ldots, \lambda_{T}, \alpha_{T})$ control the \emph{degree of dynamic risk-aversion} or \emph{risk preference} of the MDP. The associated dynamic risk measure objective function is specified using $\beta$ is written $\rhooperator_{0,T}(C_0^\pi, C_1^\pi, \ldots, C_{T+1}^\pi \, | \, \beta)$.

Let $\pi_T(\beta)$ be the (index of) an optimal policy for the decision problem of horizon $T+1$ where $\rhooperator_t$ are specified using $\beta$. For convenience, we denote the decision function at time $t$ for the optimal policy by $x_{t,T}(\, \cdot \, | \, \beta)$. The theorem below, proved in \cite{Ruszczynski2010}, serves as the foundation for risk-averse MDPs; it is an analog of the well-known risk-neutral Bellman recursion, with expectations replaced by one-step risk measures (in our case, they are taken to be $\rhooperator_{\beta_t}$). 

\begin{thm}[Risk-Averse Bellman Recursion, \cite{Ruszczynski2010}]
\hfill \\
Let the boundary condition be $V_{T,T}(s \, | \,\beta) = \rhooperator_{\beta_T}[c_T(s,p \, e^{-\kappa_Y} + \psi_{T+1,Y})]$ for any state $s \in \mathcal S$. For each of the earlier time periods $t  \in \{0,1,\ldots,T-1\}$, let
\begin{align*}
V_{t,T}(s \, | \, \beta) &=\min_{x \in \mathcal X(r)} c_t(s, x)+ \rhooperator_{\beta_t} \bigl[ V_{t+1,T}(r+x, p \, e^{-\kappa_Y} + \psi_{t+1} \, | \, \beta) \bigr].
\end{align*}
The optimal policy to (\ref{eq:rmdp}) is given by
\[
x_{t,T}(s \, | \, \beta)\in \argmin_{x\in \mathcal X(r)}  \; c_t(s, x) + \rhooperator_{\beta_t} \bigl[ V_{t+1,T}( r+x, p \, e^{-\kappa_Y} + \psi_{t+1} \, | \, \beta) \bigr],
\]
\label{thm:bellman}
i.e., it is greedy with respect to the optimal value function.
\end{thm}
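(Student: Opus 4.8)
The plan is to establish the recursion by backward induction on the time index $t$, exploiting the two structural features that make the result possible: the \emph{nested} representation (\ref{eq:nestedrho}) of the dynamic risk measure, which is guaranteed by time-consistency, and the \emph{monotonicity} together with \emph{translation invariance} of each one-step coherent risk measure $\rhooperator_{\beta_t}$. I would first define, for any policy $\pi$, the cost-to-go from time $t$ in state $s$ as the tail of the nested objective conditioned on $\mathcal F_t$, namely $J_{t,T}^\pi(s) = C_t^\pi + \rhooperator_{\beta_t}(C_{t+1}^\pi + \cdots + \rhooperator_{\beta_T}(C_{T+1}^\pi)\cdots)$ evaluated on $\{S_t^\pi = s\}$, and set $V_{t,T}(s\mid\beta) = \inf_\pi J_{t,T}^\pi(s)$. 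The goal then reduces to showing that this infimum satisfies the stated one-step recursion and is attained by the greedy decision function.

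For the base case, at $t=T$ no control remains (the last decision is exercised at $T-1$) and the only downstream cost is the terminal compensation $C_{T+1}^\pi = c_{T+1}(S_T, P_{T+1})$. Using the price dynamics, $(P_{T+1}\mid P_T = p)$ is equal in distribution to $p\,e^{-\kappa_Y} + \psi_{T+1}$, and the compensation depends on $P_{T+1}$ only through $Y_{T+1} = P_{T+1} - g(T+1)$, which satisfies $Y_{T+1} \stackrel{d}{=} p\,e^{-\kappa_Y} + \psi_{T+1,Y}$; applying $\rhooperator_{\beta_T}$ to the terminal cost rewritten this way yields exactly the boundary condition. For the inductive step I would assume the claim at $t+1$ and prove both inequalities. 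The ``$\ge$'' direction is the easy half: by time-consistency the cost-to-go under any $\pi$ decomposes as $c_t(S_t, x_t^\pi) + \rhooperator_{\beta_t}[J_{t+1,T}^\pi(S_{t+1})]$, and since $J_{t+1,T}^\pi(S_{t+1}) \ge V_{t+1,T}(S_{t+1}\mid\beta)$ by hypothesis, monotonicity of $\rhooperator_{\beta_t}$ lets me replace the former by the latter without increasing the value; minimizing over the single current decision $x_t \in \mathcal X(R_t)$ then gives the right-hand side of the recursion as a lower bound. The Markov structure $R_{t+1} = R_t + x_t$, $P_{t+1} = p\,e^{-\kappa_Y} + \psi_{t+1}$ ensures the conditional risk measure depends only on the post-decision state $(r+x, P_{t+1})$, as written.

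The reverse ``$\le$'' inequality, which simultaneously yields optimality of the greedy rule, is where the real work lies. Here I would construct the policy selecting the minimizing $x_{t,T}(s\mid\beta)$ at each stage and verify it attains equality; the crux is the \emph{interchangeability} of the pointwise minimization and the conditional risk mapping, i.e. that $\min$ and $\rhooperator_{\beta_t}$ may be swapped when the minimization ranges over $\mathcal F_{t+1}$-measurable decision rules rather than a single scalar. I expect this to be the main obstacle: it requires (i) a measurable-selection argument guaranteeing the argmin is realized by a measurable decision function $x_{t,T}(\cdot\mid\beta)$, and (ii) the decomposability of the conditional mean--CVaR measure, which follows from its dual representation as a worst-case expectation over a convex set of densities (coherence). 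The selection argument is underwritten by compactness of $\mathcal X(r)$ together with lower semicontinuity of $V_{t+1,T}$ in the state, propagated through the recursion since $c_t$ is continuous and $\rhooperator_{\beta_t}$ preserves lower semicontinuity.

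Once interchangeability is in hand, the greedy policy attains $V_{t,T}$, closing the induction and establishing both the recursion and the optimality of the greedy rule. I note that this result is due to \cite{Ruszczynski2010} in a general MDP setting; the only adaptation needed here is to verify that the specific transition and cost structure above, in particular the representation of $(P_{t+1}\mid P_t)$ via $\psi_{t+1}$, satisfies the measurability and integrability hypotheses required by that theorem, so that one may invoke it directly rather than reproving the interchange from scratch.
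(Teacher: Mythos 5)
The paper offers no proof of this theorem---it is imported directly from \cite{Ruszczynski2010}---and your closing paragraph, which reduces the task to checking that the present transition and cost structure satisfies the hypotheses of that result and then invoking it, is precisely what the paper does implicitly. Your preceding sketch (backward induction; the base-case identification of the boundary condition via $Y_{T+1}\stackrel{d}{=}p\,e^{-\kappa_Y}+\psi_{T+1,Y}$; monotonicity of $\rhooperator_{\beta_t}$ for the lower bound; measurable selection plus the interchange of minimization with the conditional risk measure for the upper bound) is a faithful reconstruction of the standard argument in the cited source, so the proposal is correct and consistent with the paper's treatment.
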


We now introduce some useful notation. First, it is often necessary for us to refer to a measure of risk on the future cost distribution; hence, we define:
\begin{align*}
v_{t,T}(r,p \, | \, \beta) &= \textnormal{VaR}_{\alpha_t}\bigl[V_{t+1,T}(r,p \, e^{-\kappa_Y} + \psi_{t+1} \, | \, \beta)\bigr],\\
c_{t,T}(r,p \, | \, \beta) &= \textnormal{CVaR}_{\alpha_t}\bigl[V_{t+1,T}(r,p \, e^{-\kappa_Y} + \psi_{t+1} \, | \, \beta)\bigr].
\end{align*}
We will invoke the relationship
\begin{align}
&V_{t,T}(s\,|\,\beta) = \min_{x\in \mathcal X(r)} xp - c_f + \tilde{V}_{t,T}(r+x,p\, | \, \beta),
\label{eq:postdecbellman}
\end{align}
where $\tilde{V}_{t,T}(r+x,p\, | \, \beta) = \rhooperator_{\beta_t} \bigl[ V_{t+1,T}( r+x, p \, e^{-\kappa_Y} + \psi_{t+1} \, | \, \beta) \bigr]$ is called the \emph{post-decision value function}.

 Before continuing, we point out that so far \emph{two} optimization models, the base model (\ref{eq:tradeoffproblem}) and the risk-averse MDP (\ref{eq:rmdp}), have been defined. We re-emphasize that the relationship between these two models is mostly explained in Figure \ref{fig:practical}, where (\ref{eq:rmdp}) serves as a proxy for the more computationally challenging model (\ref{eq:tradeoffproblem}). To be more precise, we are searching for a set of parameters $\beta$ such that the solution to (\ref{eq:rmdp}) is an acceptable solution to (\ref{eq:tradeoffproblem}). This can be thought of as a \emph{policy search} technique, wherein one searches over a subset of candidate policies; here, our candidate policies are solutions to different instances of the risk-averse MDP. As we reviewed in Section \ref{sec:intro}, this overall procedure is used by many authors (see, e.g., \cite{Philpott2012}, \cite{Shapiro2013}, \cite{Cavus2014a}, \cite{Lin2015}, \cite{Maceira2015}, and \cite{Collado2017}). The reason that the risk-averse MDP (\ref{eq:rmdp}) is often not taken to be the base model is due to its lack of practical interpretability. The exception is when $\rho_t = \mathbf{E}_t$, in which case (\ref{eq:rmdp}) is a standard risk-neutral MDP whose objective is simply the expected cumulative cost. In the next section, we focus on analyzing (\ref{eq:rmdp}) and in Section \ref{sec:compat}, we bring the models together and study a specific notion of compatibility between the two.

\section{Structure of the Risk-Averse MDP}
\label{sec:structure}
In this section, we state some structural properties of the risk-averse optimal policy and the associated optimal value function. All proofs that are not included in the body of the paper can be found in Appendix \ref{sec:appendix}.
\begin{restatable}[Convexity]{prop}{propconv}
Let $s = (r,p) \in \mathcal S$ be the state variable. The following statements hold for every $t$ and $\beta$: (i) $xp+ \tilde{V}_{t,T}(r+x,p\, | \, \beta)$ is convex in $x$ on $\mathcal X(r)$, (ii) $\tilde{V}_{t,T}(s\,|\,\beta)$ is convex in the resource $r$, and (iii) $V_{t,T}(s\,|\,\beta)$ is convex in the resource $r$.
\label{prop:conv}
\end{restatable}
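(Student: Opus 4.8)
The plan is a backward induction on $t$ from $T$ down to $0$, establishing the three claims in the order (ii), (i), (iii) within each step. I would first isolate two elementary facts. \textbf{(A)} \emph{Coherent risk measures preserve convexity:} if $g(\,\cdot\,,\omega)$ is convex in its first argument for each $\omega$, then $q \mapsto \rhooperator_{\beta_t}[g(q,\cdot)]$ is convex. Indeed, for $q = \theta q_1 + (1-\theta) q_2$ convexity of $g$ gives the pointwise bound $g(q,\cdot) \le \theta\, g(q_1,\cdot) + (1-\theta)\, g(q_2,\cdot)$; monotonicity of $\rhooperator_{\beta_t}$ passes this through the risk measure, and convexity of the coherent measure $\rhooperator_{\beta_t} = (1-\lambda_t)\mathbf{E} + \lambda_t\,\textnormal{CVaR}_{\alpha_t}$ splits the right-hand side, yielding the claim. \textbf{(B)} \emph{Partial minimization preserves convexity:} if $F(r,x)$ is jointly convex in $(r,x)$ and the feasible region $\{(r,x): x \in \mathcal X(r)\}$ is jointly convex, then $r \mapsto \min_{x \in \mathcal X(r)} F(r,x)$ is convex.

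For the base case $t = T$, the boundary value is $V_{T,T}(s\,|\,\beta) = \rhooperator_{\beta_T}[\,c_{T+1}((r,p),\,\cdot\,)\,]$. Writing $h(r) = \min(R_0 + T\xmax,\Rmax) - r$ (convex and nonnegative in $r$), the compensation expands as $c_{T+1} = p_\text{ref}\,[\,h(r) + \gamma_h\, h(r)^2 + \gamma_Y(Y_{T+1})\, h(r)\,]$. For each realization of $Y_{T+1}$ this is convex in $r$: the first term is a nonnegative multiple of the convex $h$, the quadratic $\gamma_h h(r)^2$ is convex since $t \mapsto t^2$ is convex and nondecreasing on $[0,\infty)$ composed with the nonnegative convex $h$, and $\gamma_Y(Y_{T+1})\, h(r)$ is a nonnegative multiple ($\gamma_Y > 0$) of the convex $h$. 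By fact (A), $V_{T,T}(\,\cdot\,,p\,|\,\beta)$ is convex in $r$, which is (iii) at $t = T$.

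Now assume (iii) at $t+1$: $V_{t+1,T}(\,\cdot\,,p\,|\,\beta)$ is convex in the resource for every price $p$. For (ii), note that in $\tilde{V}_{t,T}(q,p\,|\,\beta) = \rhooperator_{\beta_t}[\,V_{t+1,T}(q,\,p\,e^{-\kappa_Y}+\psi_{t+1}\,|\,\beta)\,]$ the randomness $\psi_{t+1}$ enters only the price coordinate, so for each realization the integrand is convex in $q$ by the hypothesis, and fact (A) gives convexity of $\tilde{V}_{t,T}(\,\cdot\,,p\,|\,\beta)$. For (i), $xp$ is affine in $x$ while $x \mapsto \tilde{V}_{t,T}(r+x,p\,|\,\beta)$ is the composition of the convex map from (ii) with the affine map $x \mapsto r+x$, so their sum is convex in $x$ on $\mathcal X(r)$. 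For (iii), write $V_{t,T}(r,p\,|\,\beta) = \min_{x} F(r,x)$ with $F(r,x) = xp - c_f + \tilde{V}_{t,T}(r+x,p\,|\,\beta)$; here $F$ is jointly convex in $(r,x)$ (the term $xp$ is linear, and $\tilde{V}_{t,T}(r+x,p)$ is the convex function from (ii) of the affine map $(r,x)\mapsto r+x$), and the feasible region $\{(r,x): 0 \le x \le \min(\Rmax - r,\,\xmax)\}$ is a polytope cut out by the linear inequalities $x \ge 0$, $x \le \xmax$, and $r+x \le \Rmax$, hence jointly convex. Fact (B) then delivers convexity of $V_{t,T}(\,\cdot\,,p\,|\,\beta)$, closing the induction.

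I expect the step needing the most care to be (iii), the partial-minimization step, because the constraint set $\mathcal X(r)$ depends on $r$ through the upper bound $\min(\Rmax - r,\,\xmax)$. Convexity of the value function therefore cannot be read off the objective alone; it hinges on verifying that the coupled $(r,x)$ feasible region is genuinely convex, which comes down to recognizing that the binding constraint $r + x \le \Rmax$ is linear so that the region is a polytope. Everything else is a routine propagation of convexity through the coherent risk measure via fact (A).
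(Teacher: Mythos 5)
Your proof is correct and follows essentially the same route as the paper's: backward induction in which convexity is propagated through the coherent risk measure (using monotonicity plus convexity of $\rhooperator_{\beta_t}$) and then preserved under partial minimization over the jointly convex set $\{(r,x): x \in \mathcal X(r)\}$. You are merely more explicit than the paper about the base case (expanding the terminal compensation term by term) and about why the risk measure preserves convexity, but the argument is the same.
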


The optimal value function also satisfies a monotonicity property, as shown in the following lemma. 

\begin{restatable}[Monotonicity]{prop}{propmonop}
For every $t$ and resource level $r \in \mathcal R \setminus \{\Rmax\}$, the optimal value function $V_{t,T}(r,p\,|\,\beta)$ is increasing in the spot price $p$. 
\label{prop:monop}
\end{restatable}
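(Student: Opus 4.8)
The plan is to argue by backward induction on $t$, from the terminal stage $t = T$ down to $t = 0$, that $p \mapsto V_{t,T}(r,p\mid\beta)$ is increasing whenever $r < \Rmax$ (in fact strictly increasing, which is why $r=\Rmax$ is excluded: there $\mathcal X(\Rmax)=\{0\}$ and the value is constant in $p$). The engine of the argument is that the current price $p$ enters the value function through only two channels, the immediate purchase cost $xp$ and the conditional law of the next price, and both are monotone in $p$. The key structural fact is the coupling in (\ref{eq:discretetimeou}): conditional on $P_t=p$, the next price equals $p\,e^{-\kappa_Y}+\psi_{t+1}$ with $\psi_{t+1}$ independent of $p$, so raising $p$ to $p'$ shifts the entire next-price distribution upward by the deterministic amount $(p'-p)\,e^{-\kappa_Y}>0$, realization by realization.

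For the base case $t=T$, I would use the boundary condition $V_{T,T}(r,p\mid\beta)=\rhooperator_{\beta_T}$ applied to the terminal compensation of Section \ref{subsec:compensation}, namely $[1+\gamma_h\,h(r)+\gamma_Y(Y_{T+1})]\,h(r)\,p_\text{ref}$ with the market term evaluated at $Y_{T+1}(p)=p\,e^{-\kappa_Y}+\psi_{T+1,Y}$. Since $\gamma_Y$ is increasing (Assumption \ref{ass:compensationlip}) and $h(r)\ge 0$, raising $p$ raises the compensation pathwise, and monotonicity of the coherent risk measure $\rhooperator_{\beta_T}$ gives that $V_{T,T}(r,\cdot\mid\beta)$ is increasing; the increase is strict whenever $h(r)>0$, which holds for $r<\Rmax$ on the reachable states.

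For the inductive step, suppose the claim holds at $t+1$. Using (\ref{eq:postdecbellman}) write $V_{t,T}(r,p\mid\beta)=\min_{x\in\mathcal X(r)} f_x(p)$ with $f_x(p)=xp-c_f+\tilde V_{t,T}(r+x,p\mid\beta)$, where $\tilde V_{t,T}(r+x,p\mid\beta)=\rhooperator_{\beta_t}[V_{t+1,T}(r+x,\,p\,e^{-\kappa_Y}+\psi_{t+1}\mid\beta)]$. Fix $p'>p$. Any admissible $x\in\mathcal X(r)$ satisfies $r+x\le\Rmax$; by the induction hypothesis $V_{t+1,T}(r+x,\cdot\mid\beta)$ is increasing when $r+x<\Rmax$, and when $r+x=\Rmax$ it is constant (as $\rhooperator_{\beta_t}$ of a constant equals that constant by translation invariance), which still suffices. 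Hence substituting the larger argument $p'\,e^{-\kappa_Y}+\psi_{t+1}\ge p\,e^{-\kappa_Y}+\psi_{t+1}$ raises the random cost inside $\rhooperator_{\beta_t}$ almost surely, and monotonicity of $\rhooperator_{\beta_t}$ yields $\tilde V_{t,T}(r+x,p'\mid\beta)\ge\tilde V_{t,T}(r+x,p\mid\beta)$. Adding the increasing linear term $xp$ shows each $f_x$ is increasing, and since a pointwise minimum of increasing functions is increasing, so is $V_{t,T}(r,\cdot\mid\beta)$.

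The step I expect to require the most care is upgrading this to a \emph{strict} increase at $r<\Rmax$. For any $x>0$ the linear term $xp$ already makes $f_x$ strictly increasing, and for $r<\Rmax$ the choice $x=\Rmax-r>0$ is admissible; the only remaining case is when $x=0$ is optimal, where strictness of $f_0$ must pass through the post-decision value, i.e.\ one must show $\rhooperator_{\beta_t}$ sends an almost-sure (pathwise) strict increase to a strict increase. For the expectation component this is immediate, and because $\psi_{t+1}$ has a full-support, mixture-of-normals density the upward shift is genuine in the tail, so translation invariance and monotonicity of $\textnormal{CVaR}_{\alpha_t}$ deliver strictness there as well. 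Thus every $f_x$ is strictly increasing, and a pointwise minimum of strictly increasing functions is strictly increasing, completing the induction.
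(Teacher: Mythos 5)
Your proposal is correct and follows essentially the same route as the paper's proof: backward induction from $t=T$, with the base case resting on Assumption \ref{ass:compensationlip}, the inductive step exploiting the additive structure $P_{t+1}(p) = p\,e^{-\kappa_Y}+\psi_{t+1}$ to get a pathwise increase, and monotonicity plus translation invariance of the coherent risk measure to push strictness through $\rhooperator_{\beta_t}$ (the paper does this via a deterministic $\epsilon'>0$ gap, which is the same device you invoke for the $x=0$ case). The only cosmetic difference is that the paper splits $\rhooperator_{\beta_t}$ explicitly into its expectation and CVaR components, whereas you argue directly from the risk-measure axioms.
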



We now prove that the optimal policy is of the so called \emph{basestock} or \emph{order-up-to} type. In this result, the thresholds are given as a function of the risk parameters $\beta$ and the spot price.

\begin{restatable}[Optimal Policy]{thm}{thmpolicy}
For each time $t<T$ and spot price $p$, there exists a threshold resource level $r_{t,T}(p\,|\,\beta)$ such that the optimal policy takes the following form:
\begin{equation*}
x_{t,T}(r,p \, | \, \beta) = \min \{r_{t,T}(p\,|\,\beta) - r, \xmax\} \, \indicate{r \, \le \, r_{t,T}(p\,|\,\beta)},
\end{equation*}
where $r_{t,T}(p\,|\,\beta)$ is the smallest value of $\tilde{r}$ that minimizes the quantity $\tilde{r} \,p + \tilde{V}_{t,T}(\tilde{r},p \, | \, \beta)$.
\label{thm:optimal_policy}
\end{restatable}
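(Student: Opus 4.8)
The plan is to reduce the per-stage minimization in the post-decision Bellman equation (\ref{eq:postdecbellman}) to minimizing a single convex function of the post-decision resource level, and then to identify the optimal action with the clamped projection of that function's minimizer onto the feasible set. First I would fix $t < T$, $p$, and $\beta$, and discard the immaterial constant $-c_f$ from the objective. Performing the change of variables $\tilde r = r + x$, the feasible set $\mathcal X(r) = [0, \min\{\Rmax - r, \xmax\}]$ for $x$ maps to the interval $\tilde r \in [r, \min\{\Rmax, r + \xmax\}]$, and the quantity to be minimized becomes
\[
xp + \tilde V_{t,T}(r+x, p \mid \beta) = \bigl(\tilde r\, p + \tilde V_{t,T}(\tilde r, p \mid \beta)\bigr) - rp,
\]
so that minimizing over $x$ is equivalent to minimizing $g(\tilde r) := \tilde r\, p + \tilde V_{t,T}(\tilde r, p\mid\beta)$ over $[r, \min\{\Rmax, r+\xmax\}]$, the term $-rp$ being constant in $\tilde r$.

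Next I would invoke Proposition \ref{prop:conv}(ii), which gives convexity of $\tilde V_{t,T}(\cdot, p\mid\beta)$ in the resource; adding the linear term $\tilde r\, p$ preserves convexity, so $g$ is convex on $\mathcal R = [0,\Rmax]$. Being convex and finite on a compact interval, $g$ is continuous and attains its minimum, so I may define $r_{t,T}(p\mid\beta)$ to be the \emph{smallest} minimizer of $g$ over $\mathcal R$, exactly as in the statement. The form of the optimal action then follows from the standard fact that a convex function is nonincreasing to the left of its set of minimizers and nondecreasing to the right, so that the minimizer over a subinterval is the projection of the unconstrained smallest minimizer onto that subinterval. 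Writing $m = r_{t,T}(p\mid\beta)$, $a = r$, and $b = \min\{\Rmax, r+\xmax\}$, the optimal post-decision level is $\tilde r^\star = \operatorname{clamp}(m, a, b)$ and hence $x^\star = \tilde r^\star - r$.

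It remains to check that $\tilde r^\star - r$ matches the claimed closed form $\min\{r_{t,T}(p\mid\beta) - r, \xmax\}\,\indicate{r \le r_{t,T}(p\mid\beta)}$ via a short case split on the position of $r$ relative to $m$. When $r \ge m$, the interval $[r,b]$ lies to the right of $m$, so $g$ is nondecreasing there, giving $\tilde r^\star = r$ and $x^\star = 0$, which agrees with the indicator (at $r = m$ the indicator is $1$ but $\min\{0,\xmax\}=0$). When $r < m$, I would use that $m \le \Rmax$ always holds, since $m$ is a minimizer on $\mathcal R$, so the only binding upper constraint is $r + \xmax$: if $m \le r + \xmax$ then $m$ is feasible and $x^\star = m - r = \min\{m-r, \xmax\}$, while if $m > r + \xmax$ then $g$ is nonincreasing on $[r, r+\xmax]$ and the minimum is attained at $\tilde r^\star = r+\xmax$, giving $x^\star = \xmax = \min\{m-r, \xmax\}$.

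I expect the main obstacle to be the careful handling of the two-sided, $r$-dependent feasible interval for $\tilde r$ — in particular, confirming that the $\Rmax$ clamp on the upper endpoint is never the binding constraint, so that only $\xmax$ survives in the final formula, and that the ``smallest minimizer'' convention renders the policy single-valued and well-defined when $g$ has a flat region at its minimum. The convex-projection fact driving every case is elementary, so once it is stated cleanly the remaining work is routine bookkeeping.
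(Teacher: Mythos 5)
Your proposal is correct and follows essentially the same route as the paper's proof: the change of variables to the post-decision level $\tilde r = r + x$, convexity of $\tilde r\,p + \tilde V_{t,T}(\tilde r, p\mid\beta)$ from Proposition \ref{prop:conv}, and a case split on where the unconstrained minimizer falls relative to the feasible interval $[r, \min\{r+\xmax,\Rmax\}]$. Your additional care about the smallest-minimizer convention and the non-binding $\Rmax$ clamp is consistent with, and slightly more explicit than, the paper's argument.
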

\begin{proof}[Sketch of Proof:]
The proof is standard and follows from convexity of the value function (i.e., Proposition \ref{prop:conv}) along with the fact that the risk measure is coherent (and thus, convex).
\end{proof}

Our next goal is to analyze how $p$ influences the threshold $r_{t,T}(p\,|\,\beta)$.
We are able to show that if the compensation function $\gamma_Y$ is ``not too sensitive'' to the spot price, then the charging thresholds are nonincreasing in the spot price (matching our intuition in that we should charge less when the prices are high). The sensitivty is formalized by the following Lipschitz condition.

\begin{assumption}
The market-dependent compensation function $\gamma_Y$ is positive, increasing, and $L_{\gamma_Y}$-Lipschitz continuous where it holds that $L_{\gamma_Y} \le p_\text{ref}^{-1} \, e^{2\kappa_Y} $.
\label{ass:compensationlip}
\end{assumption}

\begin{remark}
If this assumption is not satisfied, then it is possible to observe strange (i.e., non-monotonic) behavior in the thresholds as a function of price. Note that the value of one unit of charge at the time of customer arrival is governed by $\gamma_Y$. Roughly speaking, the marginal cost we should consider is of the form ``$\text{current price} - \gamma_Y(\text{price deviation at $T+1$})$.'' Assumption \ref{ass:compensationlip} limits the sensitivity of the marginal cost function contributed by $\gamma_Y$.
\end{remark}

Before moving on to the theorem, we develop some notation and state a few technical lemmas.

\begin{restatable}[Lipschitz in Spot Price]{lem}{lemlip} If Assumption \ref{ass:compensationlip} holds, then for any $t$ and $r$, the value function $V_{t,T}(r, \, \cdot \, | \, \beta)$ is Lipschitz continuous in the price dimension.
\label{lem:lip}
\end{restatable}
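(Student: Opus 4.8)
The plan is to prove the claim by backward induction on $t$, establishing at each stage a Lipschitz constant for $V_{t,T}(r,\cdot\mid\beta)$ that is \emph{uniform} over the resource $r$, so that the induction closes. The single most important tool is a non-expansiveness property of the coherent risk measure $\rhooperator_{\beta_t}$ that follows from monotonicity and translation invariance: if two random costs $X,X'$ satisfy $|X-X'|\le c$ almost surely for a \emph{constant} $c$, then $|\rhooperator_{\beta_t}(X)-\rhooperator_{\beta_t}(X')|\le c$. Indeed, $X\le X'+c$ gives $\rhooperator_{\beta_t}(X)\le\rhooperator_{\beta_t}(X'+c)=\rhooperator_{\beta_t}(X')+c$ by monotonicity and then translation invariance, and the reverse inequality is symmetric. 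This lets me push Lipschitz bounds on the integrand (which I will verify pointwise in the underlying randomness $\psi_{t+1}$) through $\rhooperator_{\beta_t}$ without loss.

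For the base case $t=T$, the only dependence of $V_{T,T}(r,p\mid\beta)$ on $p$ enters through the term $\gamma_Y(p\,e^{-\kappa_Y}+\psi_{T+1,Y})\,h(r)\,p_\text{ref}$ inside the risk measure. Since $\gamma_Y$ is $L_{\gamma_Y}$-Lipschitz by Assumption \ref{ass:compensationlip} and the price transition contracts distances by the factor $e^{-\kappa_Y}$, for each realization of $\psi_{T+1,Y}$ the integrand changes by at most $L_{\gamma_Y}\,e^{-\kappa_Y}\,h(r)\,p_\text{ref}\,|p-p'|\le L_{\gamma_Y}\,e^{-\kappa_Y}\,\Rmax\,p_\text{ref}\,|p-p'|$, using $0\le h(r)\le\Rmax$. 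Applying the non-expansiveness property yields an $r$-uniform Lipschitz constant $L_T:=L_{\gamma_Y}\,e^{-\kappa_Y}\,\Rmax\,p_\text{ref}$.

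For the inductive step, suppose $V_{t+1,T}(r',\cdot\mid\beta)$ is $L_{t+1}$-Lipschitz uniformly in $r'$. Writing the recursion as in (\ref{eq:postdecbellman}), each candidate value $\phi_x(p)=xp-c_f+\tilde V_{t,T}(r+x,p\mid\beta)$ splits into a linear term and the post-decision continuation $\tilde V_{t,T}(r+x,p\mid\beta)=\rhooperator_{\beta_t}[V_{t+1,T}(r+x,p\,e^{-\kappa_Y}+\psi_{t+1}\mid\beta)]$. The linear term $xp$ is Lipschitz with constant $x\le\min\{\Rmax-r,\xmax\}\le\Rmax$, since the charging constraint forces $r+x\le\Rmax$. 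For the continuation term, the induction hypothesis bounds the integrand's variation by $L_{t+1}\,e^{-\kappa_Y}\,|p-p'|$ pointwise in $\psi_{t+1}$, and the non-expansiveness property then bounds the variation of $\rhooperator_{\beta_t}$ by the same amount. Hence each $\phi_x$ is Lipschitz with the $r$- and $x$-uniform constant $L_t:=\Rmax+L_{t+1}\,e^{-\kappa_Y}$, and because the pointwise infimum of a family of $L_t$-Lipschitz functions is again $L_t$-Lipschitz, $V_{t,T}(r,\cdot\mid\beta)=\min_{x\in\mathcal X(r)}\phi_x$ inherits the constant $L_t$. The constants $L_t$ are finite at every stage, which is all the lemma requires.

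The main obstacle is mild: it is the careful bookkeeping needed to keep the Lipschitz constant uniform over the resource dimension (so the induction truly closes), together with the justification that Lipschitz continuity of the integrand, which I verify pointwise in $\omega$, survives application of $\rhooperator_{\beta_t}$. I expect the coherence-based non-expansiveness argument to do all the real work, with the remaining estimates routine. I also note that this lemma uses only that $\gamma_Y$ is Lipschitz, not the sharper bound $L_{\gamma_Y}\le p_\text{ref}^{-1}\,e^{2\kappa_Y}$ in Assumption \ref{ass:compensationlip}; that quantitative bound (which makes $L_T\le\Rmax\,e^{\kappa_Y}$) is reserved for the subsequent monotonicity-of-thresholds result.
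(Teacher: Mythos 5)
Your proof is correct and follows essentially the same route as the paper's: backward induction on $t$, pushing a pointwise (in $\psi_{t+1}$) Lipschitz bound through the coherent risk measure via non-expansiveness (the paper uses the equivalent inequality $|\rhooperator_{\beta_t}(A)-\rhooperator_{\beta_t}(B)|\le\rhooperator_{\beta_t}(|A-B|)$), and then through the minimum over $x$. In fact you are slightly more careful than the paper: its displayed inequality bounds $|V_{t,T}(r,p\,|\,\beta)-V_{t,T}(r,p'\,|\,\beta)|$ by $\max_x|\tilde V_{t,T}(r+x,p\,|\,\beta)-\tilde V_{t,T}(r+x,p'\,|\,\beta)|$ alone, silently dropping the $x(p-p')$ contribution of the immediate cost $xp$, whereas your additive $\Rmax$ term in the recursion $L_t=\Rmax+e^{-\kappa_Y}L_{t+1}$ accounts for it correctly (and the lemma's conclusion is unaffected either way).
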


\begin{restatable}[Differentiability of Value at Risk]{lem}{lemdiffvar}
For every resource state $r \in \mathcal R \setminus \{\Rmax\}$, the value at risk function $v_{t,T}(r,p \, | \, \beta) = \textnormal{VaR}_{\alpha_t}[V_{t+1,T}(r,p \, e^{-\kappa_Y} + \psi_{t+1} \, | \, \beta)]$ is differentiable for almost every spot price $p$.
\label{lem:diffvar}
\end{restatable}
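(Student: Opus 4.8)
The plan is to reduce the value-at-risk of the transformed value function to a \emph{deterministic} monotone transform of a single fixed quantile, after which one-dimensional a.e.-differentiability of Lipschitz functions finishes the job. Write $g(\cdot) := V_{t+1,T}(r, \cdot \, | \, \beta)$ for the value function viewed as a function of its price argument, with $r \in \mathcal R \setminus \{\Rmax\}$ and $\beta$ held fixed. First I would exploit the location structure of the conditional price: the random argument $p\, e^{-\kappa_Y} + \psi_{t+1}$ is a deterministic location shift of $\psi_{t+1}$ by the amount $p\, e^{-\kappa_Y}$. Since $\psi_{t+1}$ is a mixture of normals, its distribution function is continuous and strictly increasing on $\mathbb R$; hence its $\alpha_t$-quantile $q := \textnormal{VaR}_{\alpha_t}(\psi_{t+1})$ is a single well-defined point, and $\textnormal{VaR}_{\alpha_t}(p\, e^{-\kappa_Y} + \psi_{t+1}) = p\, e^{-\kappa_Y} + q$.

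Next I would establish the quantile-commutation identity
\[
v_{t,T}(r,p \, | \, \beta) = g\bigl(p\, e^{-\kappa_Y} + q\bigr).
\]
This rests on two facts about $g$: it is nondecreasing (Proposition \ref{prop:monop}, which applies precisely because $r \ne \Rmax$) and it is continuous, indeed Lipschitz, in the price dimension (Lemma \ref{lem:lip}). In general, for a nondecreasing continuous map $\phi$ and a random variable $Z(p) = p\, e^{-\kappa_Y} + \psi_{t+1}$ whose CDF is continuous and strictly increasing, one has $\textnormal{VaR}_{\alpha_t}(\phi(Z(p))) = \phi(\textnormal{VaR}_{\alpha_t}(Z(p)))$ under the convention $\textnormal{VaR}_{\alpha}(X) = \inf\{u : \mathbf{P}(X \le u) > \alpha\}$. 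The verification is a short comparison of $\mathbf{P}(\phi(Z(p)) \le u)$ against $\alpha$ for $u$ just below, and equal to, $\phi(\textnormal{VaR}_{\alpha_t}(Z(p)))$.

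Finally I would conclude differentiability. By the identity, $v_{t,T}(r, \cdot \, | \, \beta)$ is the composition of the affine map $p \mapsto p\, e^{-\kappa_Y} + q$ with $g$. Lemma \ref{lem:lip} gives that $g$ is Lipschitz, hence as a function of a single real variable it is differentiable outside a Lebesgue-null set. Because the affine map is a bijection of $\mathbb R$ that sends null sets to null sets, the set of $p$ at which the composition fails to be differentiable is again null; the chain rule then shows $v_{t,T}(r, \cdot \, | \, \beta)$ is differentiable for almost every $p$.

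The hard part will be the quantile-commutation step: $g$ is only \emph{weakly} monotone, so it may be flat at the quantile of $Z(p)$, and the particular $\inf$-convention for VaR must be tracked carefully across such plateaus. The saving grace is exactly the strict monotonicity and continuity of the mixture-of-normals CDF, which forces $F_{Z(p)}(\textnormal{VaR}_{\alpha_t}(Z(p))) = \alpha_t$ and keeps probability mass from accumulating pathologically around the transform's flat regions. Once this identity is in place, the remaining steps (location shift of the quantile, Rademacher-type a.e.\ differentiability in one dimension, and preservation of null sets under an affine bijection) are routine.
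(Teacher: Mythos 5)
Your proposal is correct and follows essentially the same route as the paper's proof: commute the quantile with the monotone price-map $V_{t+1,T}(r,\cdot\,|\,\beta)$ (via Proposition \ref{prop:monop}) to get $v_{t,T}(r,p\,|\,\beta)=V_{t+1,T}(r,p\,e^{-\kappa_Y}+\textnormal{VaR}_{\alpha_t}(\psi_{t+1})\,|\,\beta)$, then invoke the Lipschitz property of Lemma \ref{lem:lip} for almost-everywhere differentiability. Your added care about plateaus of the weakly monotone transform and the strictly increasing mixture-of-normals CDF tightens a step the paper treats informally, but the argument is the same.
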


Lemma \ref{lem:lip} and Lemma \ref{lem:diffvar} imply that both $V_{t,T}(r, \, \cdot \, | \, \beta)$ and $v_{t,T}(r,\,\cdot\,|\,\beta)$ are differentiable almost everywhere. It is reasonable for us to remove a null set of non-differentiability (call it $\mathcal V$) from the sample space under consideration for the remainder of the paper, i.e., those $\omega \in \Omega$ where $V_{t,T}(r, P_t(\omega)\,|\,\beta)$ or $v_{t,T}(r,\,\cdot\,|\,\beta)$ are not differentiable. Considering only $\Omega \setminus \mathcal V$ simplifies the discussion in the paper; it is reasonable to do so because $\P(P_t \in \mathcal V) = 0$ for any $t$ (the price process $P_t$ is a continuous random variable and exogenous to the decision process).


\begin{restatable}[Differentiating under the Risk Measure]{lem}{lemsensitivity}
Suppose the value function $V_{t+1,T}( \, \cdot \, | \, \beta)$ is differentiable at $r=r_0 \in \mathcal R \setminus \{\Rmax \}$. Then, we may differentiate under the risk measure in the following sense:
\begin{align*}
\partial_r \tilde{V}_{t,T}(r,p\,|\,\beta) \, |_{r=r_0} = (1&-\lambda) \, \mathbf{E}\bigl[ \partial_r V_{t+1,T}(r,p \, e^{-\kappa_Y} + \psi_{t+1}\,|\,\beta) \bigr] \, |_{r=r_0}\\
& + \lambda \, \mathbf{E}\bigl[ \partial_r V_{t+1,T}(r,p \, e^{-\kappa_Y} + \psi_{t+1}\,|\,\beta) \, | \, \psi_{t+1} \ge \textnormal{VaR}_{\alpha_t}(\psi_{t+1})  \bigr] \, |_{r=r_0}.
\end{align*}
\label{lem:sensitivity}
\end{restatable}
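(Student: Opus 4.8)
The plan is to differentiate the defining expression $\tilde{V}_{t,T}(r,p\,|\,\beta) = \rhooperator_{\beta_t}\bigl[V_{t+1,T}(r, p\,e^{-\kappa_Y} + \psi_{t+1}\,|\,\beta)\bigr]$ term-by-term through the convex combination $\rhooperator_{\beta_t} = (1-\lambda)\,\mathbf{E} + \lambda\,\textnormal{CVaR}_{\alpha_t}$. Write $X(r) = V_{t+1,T}(r, p\,e^{-\kappa_Y}+\psi_{t+1}\,|\,\beta)$, viewed as a random variable (through $\psi_{t+1}$) that depends on the parameter $r$. The expectation piece is the routine one: since $V_{t+1,T}$ is convex in $r$ by Proposition \ref{prop:conv}, it is locally Lipschitz in $r$, so for each realization of $\psi_{t+1}$ the difference quotients $(X(r_0+\delta)-X(r_0))/\delta$ are controlled by the one-sided slopes of $V$ on a fixed neighborhood of $r_0$. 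Granting an integrable dominating function, dominated convergence passes $\partial_r$ inside $\mathbf{E}$ and yields the $(1-\lambda)$ term; since $r_0 \ne \Rmax$, all the $r$ we consider stay in $\mathcal R \setminus \{\Rmax\}$.

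The heart of the proof is the $\textnormal{CVaR}$ term, and the key structural fact I would exploit is the monotonicity of Proposition \ref{prop:monop}. For fixed $r \in \mathcal R \setminus \{\Rmax\}$ and $p$, the map $\psi_{t+1} \mapsto V_{t+1,T}(r, p\,e^{-\kappa_Y}+\psi_{t+1}\,|\,\beta)$ is nondecreasing, because $V_{t+1,T}$ is increasing in its price argument and $\psi_{t+1}$ enters that argument additively. Hence the upper-tail event of $X(r)$ coincides, up to a null set, with the upper-tail event of $\psi_{t+1}$ itself: $\{X(r) \ge \textnormal{VaR}_{\alpha_t}(X(r))\} = \{\psi_{t+1} \ge \textnormal{VaR}_{\alpha_t}(\psi_{t+1})\}$. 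Because $\psi_{t+1}$ is a continuous random variable (a mixture of normals), we have $\mathbf{P}(\psi_{t+1} \ge \textnormal{VaR}_{\alpha_t}(\psi_{t+1})) = 1-\alpha_t$ and the law of $X(r)$ has no atom at its $\textnormal{VaR}$, so the representation $\textnormal{CVaR}_{\alpha_t}(X(r)) = (1-\alpha_t)^{-1}\,\mathbf{E}\bigl[X(r)\,\mathbf{1}_{\{\psi_{t+1} \ge \textnormal{VaR}_{\alpha_t}(\psi_{t+1})\}}\bigr]$ is valid. The crucial payoff is that the indicator $\mathbf{1}_{\{\psi_{t+1} \ge \textnormal{VaR}_{\alpha_t}(\psi_{t+1})\}}$ does not depend on $r$.

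With the conditioning set frozen, differentiating the $\textnormal{CVaR}$ term is the same dominated-convergence argument as before, now restricted to the event $\{\psi_{t+1} \ge \textnormal{VaR}_{\alpha_t}(\psi_{t+1})\}$, giving $\partial_r\,\textnormal{CVaR}_{\alpha_t}(X(r))\,|_{r=r_0} = (1-\alpha_t)^{-1}\,\mathbf{E}\bigl[\partial_r V_{t+1,T}(r, p\,e^{-\kappa_Y}+\psi_{t+1}\,|\,\beta)\,\mathbf{1}_{\{\psi_{t+1} \ge \textnormal{VaR}_{\alpha_t}(\psi_{t+1})\}}\bigr]\,|_{r=r_0}$. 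Dividing by $\mathbf{P}(\psi_{t+1} \ge \textnormal{VaR}_{\alpha_t}(\psi_{t+1})) = 1-\alpha_t$ rewrites this as the conditional expectation $\mathbf{E}[\,\partial_r V_{t+1,T}(\cdots) \mid \psi_{t+1} \ge \textnormal{VaR}_{\alpha_t}(\psi_{t+1})\,]$. Combining the two contributions with weights $1-\lambda$ and $\lambda$ produces the stated identity on $\Omega \setminus \mathcal V$.

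I would flag two steps as the main obstacles. First, the monotonicity-based reduction of the tail event is what makes everything clean: without it one must differentiate through a \emph{moving} $\textnormal{VaR}$ threshold and appeal to an envelope-type cancellation — using differentiability of $\textnormal{VaR}$ from Lemma \ref{lem:diffvar} together with the fact that the boundary term carries a factor $\mathbf{P}(X(r) \ge \textnormal{VaR}_{\alpha_t}(X(r))) - (1-\alpha_t) = 0$ — which is more delicate; the monotonicity observation sidesteps this entirely. Second, the genuinely technical part is justifying the interchange of $\partial_r$ and $\mathbf{E}$. For that I need a dominating function for the $r$-difference quotients of $V_{t+1,T}$ that is integrable against the law of $\psi_{t+1}$: convexity in $r$ supplies monotone, locally bounded one-sided slopes whose growth in the price argument is at most linear (via the Lipschitz control of Lemma \ref{lem:lip} and the structure of the cost and compensation functions), and since $\psi_{t+1}$ has all moments finite, this bound is integrable. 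The hypothesis that $V_{t+1,T}$ is differentiable at $r_0$ then guarantees the a.e. pointwise limit exists and equals $\partial_r V_{t+1,T}(r_0,\cdot)$, so dominated convergence applies.
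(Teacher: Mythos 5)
Your proof is correct, but it takes a genuinely different route from the paper's at the key step. The paper handles the CVaR term by invoking Theorem 3.1 of Hong (2009) — a general sensitivity result for CVaR with a moving $\textnormal{VaR}$ threshold — which requires verifying that theorem's three assumptions (done via Proposition \ref{prop:conv}, Lemma \ref{lem:diffvar}, and Proposition \ref{prop:monop}); only \emph{afterwards} does it use the price-monotonicity of Proposition \ref{prop:monop} to convert the conditioning event $\{V_{t+1,T}(r,p\,e^{-\kappa_Y}+\psi_{t+1}\,|\,\beta) \ge v_{t,T}(r,p\,|\,\beta)\}$ into $\{\psi_{t+1} \ge \textnormal{VaR}_{\alpha_t}(\psi_{t+1})\}$. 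You instead deploy that same monotonicity \emph{upfront}: because $\psi_{t+1}\mapsto V_{t+1,T}(r,p\,e^{-\kappa_Y}+\psi_{t+1}\,|\,\beta)$ is (strictly) increasing for $r\in\mathcal R\setminus\{\Rmax\}$, the upper-tail event is the $r$-independent set $\{\psi_{t+1}\ge\textnormal{VaR}_{\alpha_t}(\psi_{t+1})\}$, the law of the value function has no atom at its $\textnormal{VaR}$, and the CVaR collapses to $(1-\alpha_t)^{-1}$ times an expectation over a frozen event — after which the derivative is just one more dominated-convergence interchange, identical in kind to the $(1-\lambda)$ term. What your approach buys is self-containedness: no external sensitivity theorem and no envelope-type cancellation of the boundary term, since the threshold never moves. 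What the paper's approach buys is generality: Hong's theorem would still apply if the value function were not monotone in the exogenous noise, whereas your reduction leans essentially on the strict monotonicity supplied by Proposition \ref{prop:monop} (if the map were merely nondecreasing, the identification of tail events could fail on flat pieces). Your justification of the interchange of $\partial_r$ and $\mathbf{E}$ is at the same level of rigor as the paper's own one-line appeal to Lipschitzness and dominated convergence, so no gap there.
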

With these lemmas in mind, we are able to show that the charging thresholds obey monotonicity in the spot price, as formalized in the theorem below. Similar theorems for the risk-neutral setting can be found in \cite{Secomandi2010} and \cite{Chen2015b}. Our analysis extends these results (although under a different MDP model) to the risk-averse setting by utilizing the preceding lemmas.
\begin{restatable}[Nonincreasing Threshold in Spot Price]{thm}{thmthreshspot}
Under Assumption \ref{ass:compensationlip}, the threshold $r_{t,T}(p\,|\,\beta)$ is nonincreasing in the spot price $p$ for any degree of dynamic risk-aversion $\beta$.
\label{thm:threshspot}
\end{restatable}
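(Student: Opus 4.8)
The plan is to recast the claim as a \emph{monotone comparative statics} statement and then establish the single cross-partial inequality that it requires by backward induction. By Theorem \ref{thm:optimal_policy}, the threshold $r_{t,T}(p\,|\,\beta)$ is the smallest minimizer over $\tilde r$ of $G_t(\tilde r, p) := \tilde r\, p + \tilde V_{t,T}(\tilde r, p\,|\,\beta)$, which is convex in $\tilde r$ by Proposition \ref{prop:conv}. For a convex objective the smallest minimizer of $G_t(\cdot,p)$ is nonincreasing in $p$ whenever $G_t$ has increasing differences in $(\tilde r, p)$, i.e.\ whenever $\partial_{\tilde r} G_t(\tilde r, p) = p + \partial_{\tilde r}\tilde V_{t,T}(\tilde r, p\,|\,\beta)$ is nondecreasing in $p$. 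Since $\partial_p \partial_{\tilde r} G_t = 1 + \partial_p\partial_{\tilde r}\tilde V_{t,T}$, the entire theorem reduces to showing the single inequality $\partial_p \partial_{\tilde r}\tilde V_{t,T}(\tilde r, p\,|\,\beta) \ge -1$ for every $t$ (interpreted in the almost-everywhere sense justified by Lemmas \ref{lem:lip} and \ref{lem:diffvar}).

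First I would differentiate the expression in Lemma \ref{lem:sensitivity} with respect to $p$. Because the spot price enters $\tilde V_{t,T}$ only through the argument $p\,e^{-\kappa_Y} + \psi_{t+1}$ and the conditioning event $\{\psi_{t+1} \ge \textnormal{VaR}_{\alpha_t}(\psi_{t+1})\}$ is independent of $p$, the chain rule pulls out a factor $e^{-\kappa_Y}$ and yields
\[
\partial_p\partial_r \tilde V_{t,T}(r,p\,|\,\beta) = e^{-\kappa_Y}\Bigl[(1-\lambda)\,\mathbf{E}\bigl[\partial_p\partial_r V_{t+1,T}\bigr] + \lambda\,\mathbf{E}\bigl[\partial_p\partial_r V_{t+1,T}\,\big|\,\psi_{t+1} \ge \textnormal{VaR}_{\alpha_t}(\psi_{t+1})\bigr]\Bigr],
\]
with both derivatives evaluated at $(r, p\,e^{-\kappa_Y}+\psi_{t+1})$. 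Consequently, the bound $\partial_p\partial_r\tilde V_{t,T} \ge -1$ follows at once from the inductive invariant
\[
\partial_p\partial_r V_{t,T}(r,p\,|\,\beta) \ge -e^{\kappa_Y} \quad \text{for all } t, r, p,
\]
since a convex combination of (conditional) expectations of quantities bounded below by $-e^{\kappa_Y}$ is itself at least $-e^{\kappa_Y}$, and multiplication by $e^{-\kappa_Y}$ returns $-1$.

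I would prove this invariant by backward induction on $t$. For the base case $t=T$, the terminal value function is the risk measure of the compensation, whose $r$-derivative is $p_\text{ref}[-1 - 2\gamma_h\,h(r) - \gamma_Y(Y_{T+1})]$; differentiating once more in $p$ (again through $Y_{T+1} = p\,e^{-\kappa_Y}+\psi_{T+1,Y}$) produces $-p_\text{ref}\,\gamma_Y'(Y_{T+1})\,e^{-\kappa_Y}$, which lies in $[-p_\text{ref}L_{\gamma_Y}e^{-\kappa_Y}, 0]$ because $0 \le \gamma_Y' \le L_{\gamma_Y}$. Assumption \ref{ass:compensationlip} ($L_{\gamma_Y} \le p_\text{ref}^{-1}e^{2\kappa_Y}$) then gives exactly $\partial_p\partial_r V_{T,T} \ge -e^{\kappa_Y}$ after averaging under the risk measure; this is the one place where the Lipschitz hypothesis is essential. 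For the inductive step, I combine the displayed identity above with an envelope analysis of the post-decision recursion $V_{t,T}(r,p) = \min_{x\in\mathcal X(r)} xp - c_f + \tilde V_{t,T}(r+x,p)$: on the no-charge region $\partial_r V_{t,T} = \partial_r\tilde V_{t,T}(r,p)$ and on the capped region $\partial_r V_{t,T} = \partial_r\tilde V_{t,T}(r+\xmax, p)$, so $\partial_p\partial_r V_{t,T} = \partial_p\partial_r\tilde V_{t,T} \ge -1$; on the interior region the optimizer equals the price-dependent threshold $r_{t,T}(p\,|\,\beta)$ and the envelope theorem gives $\partial_r V_{t,T} = -p$, hence $\partial_p\partial_r V_{t,T} = -1$. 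In every region the invariant $\ge -e^{\kappa_Y}$ is maintained, closing the induction and thus the theorem.

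The main obstacle is regularity rather than the comparative-statics bookkeeping. The cross partial $\partial_p\partial_r V_{t,T}$ is a second-order object, whereas $V_{t,T}$ is only convex (hence kinked) in $r$ and merely Lipschitz in $p$; I would therefore carry out the entire argument on $\Omega \setminus \mathcal V$ using one-sided $r$-derivatives together with the almost-everywhere $p$-differentiability supplied by Lemmas \ref{lem:lip} and \ref{lem:diffvar}, and phrase the invariant as the finite-difference inequality $\partial_r V_{t,T}(r,p) - \partial_r V_{t,T}(r,p') \le e^{\kappa_Y}(p'-p)$ for $p' > p$ so as to avoid asserting existence of the mixed second derivative. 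Care is also needed to justify differentiating under the expectation in Lemma \ref{lem:sensitivity} (dominated convergence via the Lipschitz bound) and to confirm that the boundaries separating the three optimizer regions form a null set in $(r,p)$, so that the regionwise derivative computations hold almost everywhere.
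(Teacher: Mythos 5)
Your proposal is correct and follows essentially the same route as the paper: both reduce the claim to the increasing-differences property of $\tilde r\,p+\tilde V_{t,T}(\tilde r,p\,|\,\beta)$ and establish it by backward induction, using Lemma \ref{lem:sensitivity} to differentiate under the risk measure, the three-region (charge-to-capacity / charge-to-threshold / no-charge) envelope casework for the pre-decision marginal value, and Assumption \ref{ass:compensationlip} at the terminal stage. The only difference is presentational — you carry an explicit quantitative cross-partial invariant ($\partial_P\partial_r V_{t,T}\ge -e^{\kappa_Y}$, hence $\partial_p\partial_r\tilde V_{t,T}\ge -1$ after the chain-rule factor $e^{-\kappa_Y}$), whereas the paper phrases the same inequality as ``$p+\partial_r\tilde V_{t,T}(r,p\,|\,\beta)$ is nondecreasing'' and stitches the regions together via the sign of $P+\partial_r\tilde V$ at the optimizer.
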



\begin{remark}
Theorem \ref{thm:threshspot} is proven here for a spot price model that contains mean-reversion, seasonality, and jumps, three properties that are most appropriate for electricity spot prices. However, we remark that the result can be adapted to other reasonable price models as well. The crucial property used in the proof of Theorem \ref{thm:threshspot} is that $p - \E[P_{t+1} \, | \, P_t = p]$ is nondecreasing in $p$. For example, our framework can incorporate general models of the form $P_{t+1} =  \zeta_{t+1}^1 \, P_t + \zeta_{t+1}^2$ for some random variables $\zeta_{t+1}^1$ and $\zeta_{t+1}^2$, under the condition that $\E[  \zeta_{t+1}^1] \le 1$. Several popular choices, e.g., AR(1), martingale, and i.i.d. noise terms (along with any seasonality function), can be represented under this general model. Note that Assumption \ref{ass:compensationlip} is specialized for the model given in (\ref{eq:discretetimeou}) and would need to be altered under a different spot price model.
\end{remark}

\section{Practical Risk Compatibility}
\label{sec:compat}
The last section analyzes the risk-averse optimal policy under a \emph{given} dynamic risk measure $\rhooperator_{0,T}(\,\cdot\,|\,\beta)$. This is a standard approach in papers that consider risk-averse models; see, e.g., \cite{Lin2015}, \cite{Devalkar2016}, where the structure of the risk-averse optimal policy is explicitly characterized. However, in this section, we are interested in going one step further and commenting on the protocol of Figure \ref{fig:practical} in the context of our EV charging problem. From this point of view, the risk-averse MDP model is simply a computational tool used to generate candidate policies that are then evaluated under a practical risk metric and a practical reward metric (the base model objectives). The motivation for doing so is that directly optimizing the practical risk and reward objectives can be computationally difficult, while the risk-averse MDP enjoys a dynamic programming decomposition.

\subsection{Defining Risk Compatibility}
When $\pi = \pi_T(\beta)$, we will use the convenient shorthand $ \mathbf{S}^\beta_T=\mathbf{S}^\pi_T$ for the vector of visited states.
We are interested in defining a notion of \emph{compatibility} between the potentially divergent views of the problem.  A new version of Figure \ref{fig:practical} updated with mathematical notation is given in Figure \ref{fig:practical2}. Before continuing, we need to first define the notion of ``more risk-aversion.''
\begin{figure}[h]
\vspace{-10pt}
        \centering
        \includegraphics[width=0.85\textwidth]{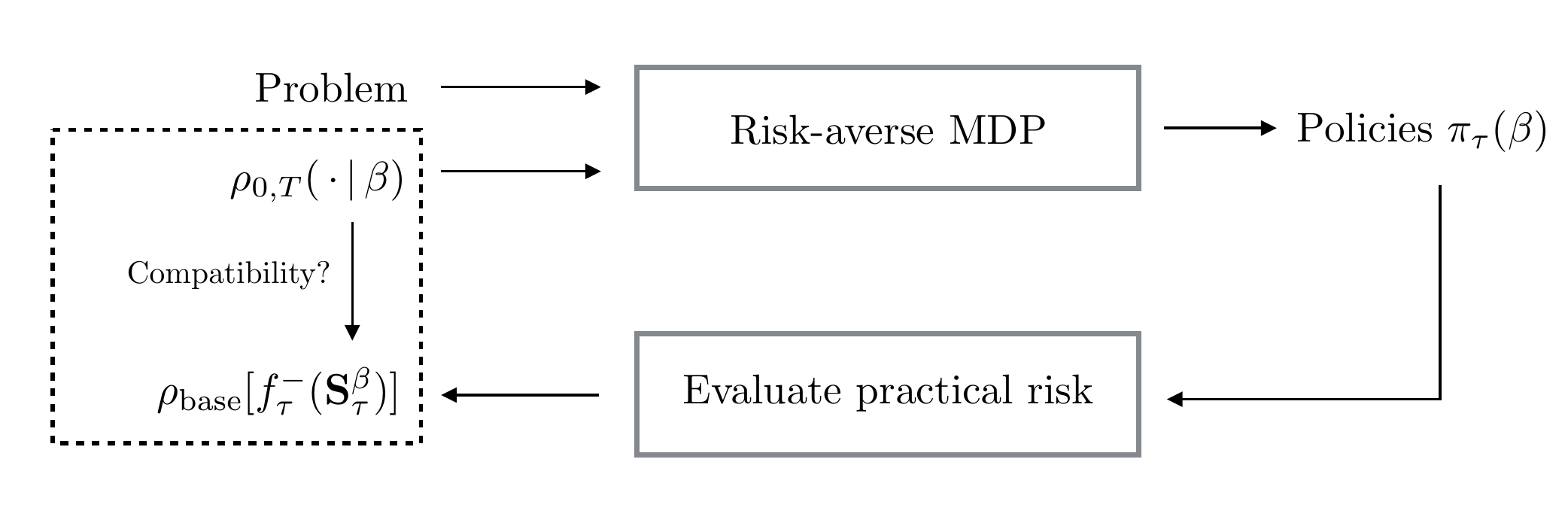}
                \vspace{-10pt}
        \caption{Evaluation using Practical Metrics of Risk and Reward}
        \label{fig:practical2}
\end{figure}

\begin{definition}[Degree of Dynamic Risk-Aversion]
Consider two dynamic risk measures $\rhooperator_{0,T}$ and $\rhooperator_{0,T}'$. If $\rhooperator_{0,T}(\mathbf{C}) \le \rhooperator_{0,T}'(\mathbf{C})$ for any cost sequence $\mathbf{C}$, then $\rhooperator_{0,T}'$ is \emph{more risk-averse} than $\rhooperator_{0,T}$. 
\label{def:degreerisk}
\end{definition}

To give a concrete example in the single period case, \cite{Belles-Sampera2014} points out that when CVaR is used instead of VaR in the financial industry (note that $\textnormal{VaR}_\alpha(X) \le \textnormal{CVaR}_\alpha(X)$ for any cost $X$), then considerably higher capital reserves are required to offset the risk. It is thus reasonable to say that $\textnormal{CVaR}_\alpha$ is more risk-averse than $\textnormal{VaR}_\alpha$.

\begin{definition}[Practical Risk Compatibility]
Let the dynamic risk measure be parameterized by $\beta$ that varies within some set $\mathcal B$. Suppose that there is a partial order $\le$ on $\mathcal B$ such that for $\beta \le \beta'$, the dynamic risk measure $\rhooperator_{0,T}(\,\cdot\,|\,\beta')$ is \emph{more risk-averse} than $\rhooperator_{0,T}(\,\cdot\,|\,\beta)$. We say that $\rhooperator_{0,T}(\,\cdot\,|\,\beta)$ has the property of \emph{practical compatibility on $\mathcal B$ with respect to
$f^-_T(\, \cdot \,)$} if the mapping $\beta \rightarrow f^-_T(\mathbf{S}^\beta_T)$ is nonincreasing for any $T$.
\end{definition}
A consequence of this definition is that $\beta \rightarrow \rho_\text{base} [ f^-_\tau(\mathbf{S}^\beta_\tau)]$ is nonincreasing. In other words, compatibility means that ``more risk-aversion'' in the dynamic risk measure sense should lead to lower risk in the practical sense as well. If, for example, more risk-aversion leads to higher practical risk, then we may conclude that the risk-averse MDP is incompatible with our practical objective and choose an alternative optimization procedure. The partial order for determining the degree of dynamic risk-aversion (in the sense of Definition \ref{def:degreerisk}) is generally easy to determine; for instance, $\textnormal{CVaR}_{\alpha}(X)$ is interpreted as more risk-averse as $\alpha$ increases \citep{Chen2009}, and $(1-\lambda) \, \mathbf{E}(X) + \lambda \,\textnormal{CVaR}_{\alpha}(X)$ is more risk-averse as $\lambda$ increases \citep{Philpott2012}.

Note that the notion of compatibility depends on many factors: we must first define the dynamic risk measure $\rhooperator_{0,T}(\,\cdot\,|\,\beta)$, the parameter set $\mathcal B$ along with the partial order $\le$, and the practical risk metric. Hence, for any given problem, we may observe compatibility in some cases and incompatibility in others (incompatibility refers to the situation when there are regions of $\mathcal B$ when the mapping $\beta \rightarrow \rho_\text{base}\,[f^-_\tau(\mathbf{S}^\beta_\tau)]$ increases). An example can be found in \cite{Maceira2015}, where it was observed that increasing risk-aversion in the dynamic risk measure unexpectedly \emph{increased} the practical risk (energy not supplied):
\begin{displayquote}
It is possible to see that for some combinations of parameters [$\alpha$ and $\lambda$] both operation cost and EENS [energy not supplied] increase simultaneously. Even though we expect higher operation costs when applying the CVaR mechanism, an increase in the EENS caused by preventive load curtailment is not desirable. Therefore, we call the region containing such combinations as ``undesirable region.'' On the other hand, there are combinations of parameters that provide results located in a desirable area, i.e., cases where the operation cost increases and the EENS decreases.
\end{displayquote}
Given that the system in \cite{Maceira2015} was operationalized in Brazil, the issue of compatibility is a critical one. Another example of incompatibility can be observed in Figure 11 of \cite{Shapiro2013}. A stylized illustration of these examples is given in Figure \ref{fig:compatibility}.

\begin{figure}[h]
        \centering
        \begin{subfigure}[b]{0.31\textwidth}
                \centering
                \includegraphics[width=\textwidth]{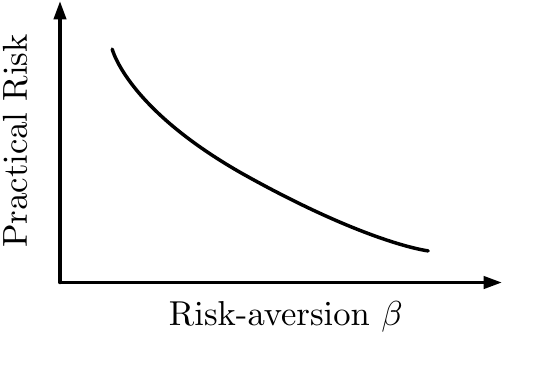}
                \caption{Compatible}
        \end{subfigure}
        \begin{subfigure}[b]{0.31\textwidth}
                \centering
                \includegraphics[width=\textwidth]{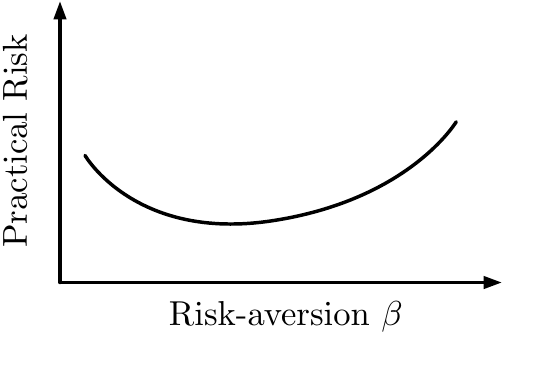}
                \caption{Incompatible}
        \end{subfigure}
        \begin{subfigure}[b]{0.31\textwidth}
                \centering
                \includegraphics[width=\textwidth]{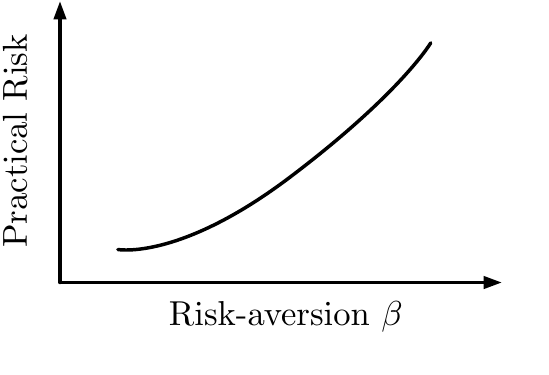}
                \caption{Incompatible}
        \end{subfigure}
        \caption{Illustration of Practical (In)compatibility}
        \label{fig:compatibility}
\end{figure}

\subsection{Compatibility Results} In the remainder of this section, we state and prove several risk compatibility properties under the condition that for any $T$, the function $f^-_T( \mathbf{S}_T^\pi)$ is componentwise nonincreasing in the resource states $\mathbf{R}_T^\pi = (R_0^\pi, R_1^\pi, \ldots, R_T^\pi)$, a condition we refer to as \emph{resource-monotonicity}. In other words, more energy in the vehicle corresponds to lower practical risk. Although we did not require resource-monotonicity when introducing practical risk metrics (in the interest of potential generalizations to other models), the inherent trade-offs associated with the ``charging versus waiting'' decision studied in the dynamic charging problem necessitate this property. We start with an important lemma.

\begin{restatable}[Decreasing Marginal Value in Price]{lem}{marginalval}
Under Assumption \ref{ass:compensationlip}, the pre-decision and post-decision marginal value of energy, $\partial_r V_{t,T}(r,p\,|\,\beta)$ and $\partial_r \tilde{V}_{t,T}(r,p\,|\,\beta)$, are nonincreasing in the spot price $p$ for any fixed risk-aversion $\beta$.
\label{lem:marginalval}
\end{restatable}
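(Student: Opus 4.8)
The plan is to prove both assertions together by backward induction on $t$, carrying the pre-decision and post-decision statements jointly so that the post-decision bound at stage $t$ feeds the pre-decision bound at stage $t$, which in turn supplies the hypothesis at stage $t-1$. For the base case $t=T$, write $K = \min(R_0 + T\xmax, \Rmax)$ so that $h(r) = \max(K-r,0)$, and recall that the boundary condition gives $V_{T,T}(r,p\,|\,\beta) = \rho_{\beta_T}\bigl[(1+\gamma_h\, h(r) + \gamma_Y(p\,e^{-\kappa_Y}+\psi_{T+1,Y}))\,h(r)\,p_\text{ref}\bigr]$. For $r<K$ the shortage $h(r)=K-r\ge 0$ is deterministic, so translation invariance and positive homogeneity of the coherent measure $\rho_{\beta_T}$ collapse this to
\[
V_{T,T}(r,p\,|\,\beta) = p_\text{ref}\bigl[(K-r) + \gamma_h (K-r)^2 + (K-r)\,\rho_{\beta_T}\bigl(\gamma_Y(p\,e^{-\kappa_Y}+\psi_{T+1,Y})\bigr)\bigr].
\]
Differentiating in $r$, the only price-dependent term is $-p_\text{ref}\,\rho_{\beta_T}(\gamma_Y(p\,e^{-\kappa_Y}+\psi_{T+1,Y}))$; since $\gamma_Y$ is increasing the integrand rises pointwise in $p$, and monotonicity of the coherent measure makes this quantity nondecreasing in $p$, so $\partial_r V_{T,T}$ is nonincreasing in $p$ (the region $r\ge K$ is trivial).

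For the post-decision step, assume $\partial_r V_{t+1,T}(r,\,\cdot\,|\,\beta)$ is nonincreasing in price. By Lemma~\ref{lem:sensitivity} we may write $\partial_r\tilde V_{t,T}(r,p\,|\,\beta)$ as the convex combination $(1-\lambda)\,\mathbf{E}[\partial_r V_{t+1,T}(r,p\,e^{-\kappa_Y}+\psi_{t+1}\,|\,\beta)] + \lambda\,\mathbf{E}[\partial_r V_{t+1,T}(r,p\,e^{-\kappa_Y}+\psi_{t+1}\,|\,\beta)\mid \psi_{t+1}\ge \textnormal{VaR}_{\alpha_t}(\psi_{t+1})]$. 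For $p_1<p_2$ we have $p_1 e^{-\kappa_Y}+\psi_{t+1}\le p_2 e^{-\kappa_Y}+\psi_{t+1}$ pointwise, so the induction hypothesis makes each integrand nonincreasing in $p$; because the conditioning event $\{\psi_{t+1}\ge \textnormal{VaR}_{\alpha_t}(\psi_{t+1})\}$ depends on neither $p$ nor $r$, both the expectation and the conditional expectation preserve the inequality, and the convex combination stays nonincreasing. Hence $\partial_r\tilde V_{t,T}(r,\,\cdot\,|\,\beta)$ is nonincreasing in $p$.

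For the pre-decision step, reparametrize (\ref{eq:postdecbellman}) by the post-decision resource $y = r+x \in [r,\,\min(r+\xmax,\Rmax)]$. The objective is convex in $y$ (Proposition~\ref{prop:conv}) with unconstrained minimizer equal to the basestock target $r_{t,T}(p\,|\,\beta)$, which satisfies the first-order condition $\partial_r\tilde V_{t,T}(r_{t,T}(p\,|\,\beta),p\,|\,\beta) = -p$. Thus the optimal post-decision level is the clamp $y^\ast(r,p) = \min\{\max\{r_{t,T}(p\,|\,\beta),\,r\},\,\min(r+\xmax,\Rmax)\}$, and a direct computation in each of the three regimes (charge to target, charge at the feasible endpoint, do not charge) yields the single identity $\partial_r V_{t,T}(r,p\,|\,\beta) = \partial_r\tilde V_{t,T}(y^\ast(r,p),p\,|\,\beta)$. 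Two monotonicities now reinforce: by Theorem~\ref{thm:threshspot} the target is nonincreasing in $p$, so the clamp $y^\ast(r,p)$ is nonincreasing in $p$; and $\partial_r\tilde V_{t,T}(y,p\,|\,\beta)$ is nondecreasing in $y$ (convexity, Proposition~\ref{prop:conv}) and nonincreasing in $p$ (the step just proved). Hence, for $p_1<p_2$,
\[
\partial_r V_{t,T}(r,p_1\,|\,\beta) = \partial_r\tilde V_{t,T}(y^\ast(r,p_1),p_1\,|\,\beta) \ge \partial_r\tilde V_{t,T}(y^\ast(r,p_2),p_1\,|\,\beta) \ge \partial_r\tilde V_{t,T}(y^\ast(r,p_2),p_2\,|\,\beta) = \partial_r V_{t,T}(r,p_2\,|\,\beta),
\]
which closes the induction.

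The main obstacle is this last step, and specifically the clean derivation and use of the identity $\partial_r V_{t,T} = \partial_r\tilde V_{t,T}(y^\ast,\,\cdot\,)$: one must verify it across all three policy regimes, including the degenerate cases where the target is clamped to $0$, to $\Rmax$, or to the endpoint $r+\xmax$ (where the first-order condition holds only as an inequality and left/right $r$-derivatives must be used), and confirm that the argument respects the almost-everywhere differentiability caveats flagged after Lemmas~\ref{lem:diffvar} and~\ref{lem:sensitivity}. The pleasant feature, once the identity is in place, is that the direct price effect and the indirect effect through the nonincreasing basestock target both push $\partial_r\tilde V_{t,T}(y^\ast(r,p),p)$ downward, so the two monotonicities cooperate rather than compete, and no delicate continuity-at-the-regime-boundaries analysis is needed.
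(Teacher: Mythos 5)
Your base case and your post-decision step coincide with the paper's: the terminal derivative computation matches, and the post-decision monotonicity follows from Lemma \ref{lem:sensitivity} exactly as in the paper's proof. The pre-decision step, however, rests on the identity $\partial_r V_{t,T}(r,p\,|\,\beta) = \partial_r \tilde{V}_{t,T}(y^\ast(r,p),p\,|\,\beta)$, and this identity is false in precisely the degenerate case you flag and then dismiss. In the ``charge to target'' regime the identity requires the first-order condition $P + \partial_r\tilde V_{t,T}(r_{t,T}(p\,|\,\beta),P\,|\,\beta)=0$ to hold with \emph{equality}, which fails whenever the minimizer over $\mathcal R$ sits at the corner $r_{t,T}(p\,|\,\beta)=\Rmax$ (or $0$): there one only has $P+\partial_r\tilde V_{t,T}(\Rmax,P\,|\,\beta)\le 0$, possibly strictly, while the envelope theorem gives $\partial_r V_{t,T}(r,p\,|\,\beta)=-P$. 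This is not a measure-zero technicality: the target equals $\Rmax$ on a nondegenerate range of low prices, and in the fast-charging regime that corner is reachable from every $r$, so your three-line chain of inequalities compares the wrong quantities on an open set of prices. The claim that ``no delicate continuity-at-the-regime-boundaries analysis is needed'' is therefore too optimistic; the proof as written does not close.

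The conclusion is still true in that regime (there $\partial_r V_{t,T}=-P$, which is trivially decreasing in $P$), and the repair is exactly the paper's device. Instead of a single identity, the paper uses the two one-sided representations $\partial_r V_{t,T}=\min\{-P,\,\partial_r\tilde V_{t,T}(r+\xmax,P\,|\,\beta)\}$ on $\mathscr P_\text{low}(r)\cup\mathscr P_\text{med}(r)$ and $\partial_r V_{t,T}=\max\{-P,\,\partial_r\tilde V_{t,T}(r,P\,|\,\beta)\}$ on $\mathscr P_\text{med}(r)\cup\mathscr P_\text{high}(r)$ (equations (\ref{eq:lowmed})--(\ref{eq:medhigh})); these need the first-order conditions only as inequalities, so they survive the corner cases, each is a min or max of two nonincreasing functions of $P$, and the overlap on $\mathscr P_\text{med}(r)$ glues them into a globally nonincreasing function. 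Your intuition that the direct price effect and the effect through the nonincreasing basestock target cooperate is correct, but it must be implemented through these inequalities rather than through the clamp identity.
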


We remind the reader that in the proof of Theorem \ref{thm:threshspot}, we showed that $p+\partial_r \tilde{V}_{t,T}(r,p\,|\,\beta)$ is \emph{nondecreasing} in $p$. The combined effect with the above lemma implies that $\partial_r \tilde{V}_{t,T}(r,p\,|\,\beta)$ cannot decrease too quickly, but is nonincreasing nonetheless. Leveraging this lemma, we are able to show our first compatibility result for the case when the dynamic risk measure is parameterized by $\beta_0 = (\lambda_0, \alpha_0)$. Recall that we described two parameter regimes, the general charging regime where $\xmax$ is unrestricted and the fast charging regime where $\xmax$ is larger than $\Rmax$.

\begin{restatable}[Compatibility in General Charging Regime]{thm}{thmcompatone}
Fix any reservation horizon $T$ and the risk-aversion $\beta_{-0}^* = (\lambda_1^*, \alpha_1^*, \ldots, \lambda_{T}^*, \alpha_{T}^*)$ for all periods except the first. Let 
\[
\mathcal B(\beta_{-0}^*) = \{\beta = (\lambda_0, \alpha_0, \lambda_1^*, \alpha_1^*, \ldots, \lambda_{T}^*, \alpha_{T}^*) : \lambda_0 \in [0,1], \, \alpha_0 \in (0,1)\}
\]
be the set of risk-aversion parameters where $(\lambda_t, \alpha_t)$ matches $(\lambda_t^*, \alpha_t^*)$ of $\beta_{-0}^*$ while the risk-aversion in the first period is a ``free parameter.'' Taking the partial order $\le$ to be the componentwise inequality on $\mathbb R^2$ gives compatibility on the set $\mathcal B(\beta_{-0}^*)$ between $\rhooperator_{0,T}(\,\cdot\,|\,\beta)$ and any resource-monotone $f_T^-$.
\label{thm:thmcompatone}
\end{restatable}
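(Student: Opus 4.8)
The plan is to exploit the fact that the first-period risk parameter $\beta_0 = (\lambda_0,\alpha_0)$ enters the backward recursion of Theorem \ref{thm:bellman} only at the outermost step, so it influences the optimal policy exclusively through the period-$0$ threshold $r_{0,T}(p\mid\beta)$. Concretely, for every $t\ge 1$ the value function $V_{t,T}(\cdot\mid\beta)$ is assembled from the terminal condition using only $\rho_{\beta_t},\ldots,\rho_{\beta_T}$, all held fixed at $\beta_{-0}^*$; hence $V_{t,T}$ and the thresholds $r_{t,T}(p\mid\beta)$ for $t\ge 1$ are independent of $\beta_0$. The entire dependence of the policy on the free parameter is thus concentrated in period $0$.

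First I would show that $r_{0,T}(p\mid\beta)$ is nondecreasing in $\beta_0$ under the componentwise order. By Theorem \ref{thm:optimal_policy} this threshold is the smallest minimizer of the convex map $\tilde r\mapsto \tilde r\,p + \tilde V_{0,T}(\tilde r,p\mid\beta)$, so it suffices to prove that $\partial_r \tilde V_{0,T}(r,p\mid\beta)$ is nonincreasing in each of $\lambda_0$ and $\alpha_0$: lowering a convex function's derivative pointwise shrinks the set $\{\tilde r:\, p+\partial_r\tilde V_{0,T}(\tilde r,p\mid\beta)\ge 0\}$ and hence raises its infimum, the smallest minimizer. Using Lemma \ref{lem:sensitivity},
\[
\partial_r \tilde V_{0,T}(r,p\mid\beta) = (1-\lambda_0)\,\mathbf{E}\bigl[\partial_r V_{1,T}\bigr] + \lambda_0\,\mathbf{E}\bigl[\partial_r V_{1,T}\,\big|\,\psi_1\ge\textnormal{VaR}_{\alpha_0}(\psi_1)\bigr],
\]
and the key observation is that, by Lemma \ref{lem:marginalval}, $\partial_r V_{1,T}(r,p\,e^{-\kappa_Y}+\psi_1\mid\beta)$ is nonincreasing in $\psi_1$. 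Conditioning a nonincreasing function on an upper tail therefore returns a value no larger than its unconditional mean, so the CVaR term lies below the expectation term; increasing $\lambda_0$ shifts weight from the larger to the smaller quantity, while increasing $\alpha_0$ raises $\textnormal{VaR}_{\alpha_0}(\psi_1)$ and conditions on an even more extreme tail. Both effects decrease $\partial_r\tilde V_{0,T}$, as required.

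Next I would set up a pathwise coupling. Fix $\beta\le\beta'$ in $\mathcal B(\beta_{-0}^*)$ and run both policies on a common realization of the exogenous price path $\{P_t\}$ (legitimate since $\{P_t\}$ is independent of the decisions and, after deleting the null set $\mathcal V$, the thresholds are well defined). Both trajectories start from the same $R_0$; since $r_{0,T}(P_0\mid\beta')\ge r_{0,T}(P_0\mid\beta)$ and the basestock update $R_1=\max\{R_0,\min\{r_{0,T}(P_0),R_0+\xmax\}\}$ is nondecreasing in the threshold, we get $R_1^{\beta'}\ge R_1^{\beta}$. For $t\ge 1$ both policies use the identical threshold $r_{t,T}(P_t\mid\beta_{-0}^*)$, and the induced transition $R_t\mapsto R_{t+1}=\max\{R_t,\min\{r_{t,T}(P_t),R_t+\xmax\}\}$ is a nondecreasing map of $R_t$; a straightforward induction propagates the ordering, giving $R_t^{\beta'}\ge R_t^{\beta}$ for all $t$ on every sample path. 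Because $f_T^-$ is resource-monotone and the two state vectors differ only in their (ordered) resource components, $f_T^-(\mathbf{S}_T^{\beta'})\le f_T^-(\mathbf{S}_T^{\beta})$ pathwise, which is exactly compatibility; applying any monotone $\rho_\text{base}$ preserves the inequality.

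The main obstacle is the threshold-monotonicity step. Everything downstream is a clean coupling argument, but establishing that $\partial_r\tilde V_{0,T}$ is monotone in \emph{both} components of $\beta_0$ requires combining Lemma \ref{lem:sensitivity} with Lemma \ref{lem:marginalval} and handling the tail-conditioning carefully---in particular verifying that $\alpha_0\mapsto\mathbf{E}[\partial_r V_{1,T}\mid\psi_1\ge\textnormal{VaR}_{\alpha_0}(\psi_1)]$ is nonincreasing, which is where the monotonicity of the marginal value in price is essential and where Assumption \ref{ass:compensationlip} ultimately does its work.
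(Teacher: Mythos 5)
Your proposal is correct and follows essentially the same route as the paper's proof: the Lemma \ref{lem:sensitivity} decomposition of $\partial_r \tilde{V}_{0,T}$, the observation that $V_{1,T}$ is independent of $\beta_0$, and the use of Lemma \ref{lem:marginalval} to show the tail-conditional expectation is both below the unconditional mean and nonincreasing in $\alpha_0$, yielding a nondecreasing period-$0$ threshold. Your explicit pathwise coupling for propagating $R_t^{\beta'} \ge R_t^{\beta}$ merely spells out a step the paper leaves as ``easy to conclude,'' so there is no substantive difference.
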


We remark that the above compatibility result requires a very specific parameterization of the dynamic risk measure where only the initial risk-aversion parameters are free. This restrictive parameterization is not ideal in that it limits the decision-maker to a small set of risk-averse policies. A key takeaway from Theorem \ref{thm:thmcompatone} and its narrow scope highlights the delicate nature of the practical risk compatibility question.

Interestingly, we now show in Theorem \ref{thm:thmcompattwo} that compatibility \emph{can be shown with a fully general parameterization of the dynamic risk measure} if we focus only on the fast charging regime, where $\Rmax \le \xmax$. We include the proof of the theorem here in order to comment on the critical differences between the fast charging regime and the general regime described in Theorem \ref{thm:thmcompatone} (see Section \ref{sec:incompat}).

\begin{restatable}[Compatibility in Fast Charging Regime]{thm}{thmcompattwo}
Fix any reservation horizon $T$ and consider the fully general risk-aversion parameter set 
\[
\mathcal B = \{\beta : \lambda_t \in [0,1], \, \alpha_t \in (0,1), \, t= 0, 1, \ldots, T \}
\]
of risk-aversion parameters. Let the partial order $\le$ be the componentwise inequality on $\mathbb R^T$. Then, compatibility holds on the set $\mathcal B$ between $\rhooperator_{0,T}(\,\cdot\,|\,\beta)$ and any resource-monotone $f_T^-$.
\label{thm:thmcompattwo}
\end{restatable}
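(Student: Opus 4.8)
The plan is to reduce the compatibility statement to a \emph{pathwise} monotonicity of the charge trajectory in $\beta$, and then to establish that monotonicity through a backward induction on the marginal value of energy. Since $f^-_T$ is resource-monotone and the price process $\{P_t\}$ is exogenous (hence shared across all values of $\beta$ on each sample path $\omega$), it suffices to show that the resource vector $\mathbf{R}^\beta_T = (R_0^\beta,\ldots,R_T^\beta)$ is componentwise nondecreasing in $\beta$, pathwise. The key simplification afforded by the fast charging regime is that $\Rmax \le \xmax$ makes every target level $\tilde{r} \in [r,\Rmax]$ reachable in one step, so the basestock policy of Theorem~\ref{thm:optimal_policy} collapses to the recursion $R_{t+1}^\beta = \max\{R_t^\beta,\, r_{t,T}(P_t\,|\,\beta)\}$. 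Because $\max$ is nondecreasing in both arguments and $R_0^\beta$ and $P_t(\omega)$ do not depend on $\beta$, a forward induction on $t$ reduces everything to the single claim that the thresholds $r_{t,T}(p\,|\,\beta)$ are nondecreasing in $\beta$ for each $t$ and $p$.

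By Proposition~\ref{prop:conv} the threshold $r_{t,T}(p\,|\,\beta)$ is the smallest point where the nondecreasing map $\tilde{r} \mapsto p + \partial_r \tilde{V}_{t,T}(\tilde{r}, p\,|\,\beta)$ crosses zero; thus it is enough to prove that $\partial_r \tilde{V}_{t,T}(r,p\,|\,\beta)$ is \emph{nonincreasing} in $\beta$ (making energy more valuable, i.e.\ the marginal cost more negative, pushes the crossing to the right). I would prove the stronger statement, by backward induction on $t$, that the pre-decision marginal value $\partial_r V_{t,T}(r,p\,|\,\beta)$ is nonincreasing in $\beta$ (componentwise) at every $(r,p)$. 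For the base case $t=T$, differentiating the terminal compensation under $\rho_{\beta_T}$ and using that $\gamma_Y$ is increasing (Assumption~\ref{ass:compensationlip}) shows $\partial_r c_{T+1}$ is decreasing in the price shock $\psi_{T+1,Y}$; hence the CVaR tail mean is at most the full mean, and raising either $\lambda_T$ or $\alpha_T$ can only decrease $\partial_r V_{T,T}$.

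For the inductive step I would invoke Lemma~\ref{lem:sensitivity} to write $\partial_r \tilde{V}_{t,T}$ as the convex combination $(1-\lambda_t)\,\mathbf{E}[\partial_r V_{t+1,T}] + \lambda_t\,\mathbf{E}[\partial_r V_{t+1,T}\mid \psi_{t+1}\ge \textnormal{VaR}_{\alpha_t}(\psi_{t+1})]$, and split the comparison $\beta \le \beta'$ into two monotone moves. The downstream change $\beta_{t+1:T}\le\beta_{t+1:T}'$ is handled by the induction hypothesis, which gives $\partial_r V_{t+1,T}(\cdot\,|\,\beta) \ge \partial_r V_{t+1,T}(\cdot\,|\,\beta')$ pointwise in $\psi_{t+1}$, together with the monotonicity of $\rho_{\beta_t}$ (a convex combination of two monotone maps). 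The current-period change $\beta_t \le \beta_t'$ is handled by Lemma~\ref{lem:marginalval}: since $\partial_r V_{t+1,T}$ is nonincreasing in price and price is increasing in $\psi_{t+1}$, the tail-conditional mean is at most the mean, so increasing $\lambda_t$ or $\alpha_t$ again only lowers $\partial_r \tilde{V}_{t,T}$. Chaining the two moves yields $\partial_r \tilde{V}_{t,T}(\cdot\,|\,\beta)\ge \partial_r \tilde{V}_{t,T}(\cdot\,|\,\beta')$.

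The hard part will be lifting this back to $\partial_r V_{t,T}$, and this is exactly where the fast charging regime is indispensable. Under $\Rmax \le \xmax$ the basestock structure gives the explicit form $\partial_r V_{t,T}(r,p\,|\,\beta) = -p$ for $r \le r_{t,T}(p\,|\,\beta)$ and $\partial_r V_{t,T}(r,p\,|\,\beta) = \partial_r \tilde{V}_{t,T}(r,p\,|\,\beta)$ for $r > r_{t,T}(p\,|\,\beta)$ (below the threshold one simply buys one fewer unit at the current price). Having shown $\partial_r \tilde{V}_{t,T}$ nonincreasing in $\beta$, the threshold $r_{t,T}(p\,|\,\beta)$ is nondecreasing in $\beta$, and I would then check the three regions of $r$ relative to the two thresholds $r_{t,T}(p\,|\,\beta)\le r_{t,T}(p\,|\,\beta')$: below both, the marginal value is $-p$ for each; between them, it is $\ge -p$ at $\beta$ (convexity) and exactly $-p$ at $\beta'$; above both, it is $\partial_r \tilde{V}$ for each. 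In every case $\partial_r V_{t,T}(\cdot\,|\,\beta) \ge \partial_r V_{t,T}(\cdot\,|\,\beta')$, closing the induction. The point is that the flat ``$-p$'' marginal value below the threshold is precisely what a single-period reach to the target guarantees; in the general regime this flat piece is lost, the $\beta$-dependent threshold can no longer be crossed monotonically, and the fully general parameterization fails — which is the structural contrast with Theorem~\ref{thm:thmcompatone}. Finally, threshold monotonicity feeds the forward recursion above to give pathwise $\mathbf{R}^\beta_T \le \mathbf{R}^{\beta'}_T$, and resource-monotonicity of $f^-_T$ delivers $f^-_T(\mathbf{S}^{\beta'}_T)\le f^-_T(\mathbf{S}^{\beta}_T)$, i.e.\ compatibility on $\mathcal B$.
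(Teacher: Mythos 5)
Your proposal is correct and follows essentially the same route as the paper's proof: a backward induction showing $\partial_r \tilde V_{t,T}(r,p\,|\,\beta)$ is nonincreasing in $\beta$ via Lemma \ref{lem:sensitivity} and Lemma \ref{lem:marginalval}, lifted to the pre-decision marginal value through the same three-case analysis around the two thresholds (your Cases ``below both / between / above both'' are exactly the paper's Cases I--III on the price sets), and then converted to threshold and resource monotonicity and finally to compatibility via resource-monotonicity of $f_T^-$. The only differences are presentational: you parameterize the casework by $r$ rather than by price sets, make the forward recursion $R_{t+1}^\beta = \max\{R_t^\beta, r_{t,T}(P_t\,|\,\beta)\}$ explicit, and split the comparison $\beta \le \beta'$ into a downstream move and a current-period move rather than one chained inequality.
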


\begin{proof}
Consider the risk-aversion parameters $\beta, \, \beta' \in \mathcal B$ where
\[
\beta = (\lambda_0, \alpha_0, \ldots, \lambda_{T}, \alpha_{T}) \le  (\lambda_0', \alpha_0', \ldots, \lambda_{T}', \alpha_{T}') = \beta'.
\]
Suppose that $r_{t,T}(p\,|\,\beta) \le r_{t,T}(p\,|\,\beta')$ for any spot price $p$ and time $t$. If this is true, then since the thresholds (i.e., charging ``targets'') are higher for every $t$, it follows that $\mathbf{R}_T^\beta \le \mathbf{R}_T^{\beta'}$. Therefore, threshold monotonicity $r_{t,T}(p\,|\,\beta) \le r_{t,T}(p\,|\,\beta')$ is a sufficient condition for the compatibility criterion $f_T^-(\mathbf{S}_T^{\beta'}) \le f_T^-(\mathbf{S}_T^\beta)$.

Our goal for the remainder of this proof is to show that the marginal value of energy \emph{decreases} as risk-aversion increases: $\partial_r \tilde{V}_{t,T}(r,p\,|\,\beta) \ge \partial_r \tilde{V}_{t,T}(r,p\,|\,\beta')$. This would imply the desired condition $r_{t,T}(p\,|\,\beta) \le r_{t,T}(p\,|\,\beta')$, from which the reasoning above can be applied to complete the argument.

\vspace{7pt} \noindent \textbf{Base Case.}  We proceed via backward induction, starting with the base case of $t=T-1$. First, note that $\gamma_Y$ is positive by Assumption \ref{ass:compensationlip} and $\rhooperator_{\beta_T}$ is nondecreasing in $\beta_T$. The derivative 
\begin{align*}
\partial_r V_{t+1,T}(r,p \, &e^{-\kappa_Y} + \psi_{t+1}\,|\,|beta)\\
&= -p_\text{ref} \bigl[ 1 + 2 \gamma_h \, h(r) +  \rhooperator_{\beta_T} [\gamma_Y((p \, e^{-\kappa_Y} + \psi_{t+1}) \, e^{-\kappa_Y} + \psi_{t+1,Y})] \bigr] 
\end{align*}
is thus nonincreasing in $\beta_T$. Next, by Lemma \ref{lem:marginalval} and the fact that $\textnormal{VaR}_{\alpha_{T-1}}(\psi_{T})$ is increasing in $\alpha_{T-1}$, it follows that the mapping
 \[
 \alpha_{T-1} \mapsto \mathbf{E}\bigl[ \partial_r V_{T,T}(r,p \, e^{-\kappa_Y} + \psi_{T}\,|\,\beta) \, | \, \psi_{T} \ge \textnormal{VaR}_{\alpha_{T-1}}(\psi_{T})   \bigr]
 \]
 is nonincreasing. Hence, we have
  \begin{align*}
\partial_r \tilde{V}_{T-1,T}(r,p\,|\,\beta) &=  \begin{aligned}[t](1 -\lambda_{T-1}) \, &\mathbf{E}\bigl[ \partial_r V_{T,T}(r,p \, e^{-\kappa_Y} + \psi_{T}\,|\,\beta) \bigr] \\
&+ \lambda_{T-1} \, \mathbf{E}\bigl[ \partial_r V_{T,T}(r,p \, e^{-\kappa_Y} + \psi_{T}\,|\,\beta) \, | \, \psi_{T} \ge \textnormal{VaR}_{\alpha_{T-1}}(\psi_{T})   \bigr]\end{aligned}\\
&\ge \begin{aligned}[t](1 -\lambda_{T-1}') \, &\mathbf{E}\bigl[ \partial_r V_{T,T}(r,p \, e^{-\kappa_Y} + \psi_{T}\,|\,\beta') \bigr] \\
&+ \lambda_{T-1}' \, \mathbf{E}\bigl[ \partial_r V_{T,T}(r,p \, e^{-\kappa_Y} + \psi_{T}\,|\,\beta') \, | \, \psi_{T} \ge \textnormal{VaR}_{\alpha_{T-1}'}(\psi_{T})   \bigr]\end{aligned}\\
&= \partial_r \tilde{V}_{T-1,T}(r,p\,|\,\beta'),
\end{align*}
completing the base case. The induction hypothesis is $\partial_r \tilde{V}_{t,T}(r,p\,|\,\beta) \ge \partial_r \tilde{V}_{t,T}(r,p\,|\,\beta')$ and we will verify that the same statement holds when $t-1$ replaces $t$.

\vspace{7pt} \noindent  \textbf{Inductive Step.} 
Let $P = p \, e^{-\kappa_Y} + \psi_{t}$. We now consider the value function at time $t$ evaluated at $P$ (i.e., from the perspective of time $t-1$):
\begin{equation}
V_{t,T}(r,p\,|\,\beta) = x_{t,T}(r,p \, | \, \beta) \cdot P - c_f + \tilde{V}_{t,T}(r+x_{t,T}(r,p \, | \, \beta) ,P\,|\,\beta),
\label{eq:valuefunc}
\end{equation}
and as a first step, we aim to show that $\partial_r V_{t,T}(r,p\,|\,\beta) \ge \partial_r V_{t,T}(r,P\,|\,\beta')$. Since we are in the fast charging regime, the optimal policy at time $t$ is to either charge (all the way) up to the threshold $r_{t,T}(p\,|\,\beta)$ or do nothing if the charge level is already above $r_{t,T}(p\,|\,\beta)$. We define the price sets 
\[
\mathscr P_\text{low}(r,\beta) = \{P: r \le r_{t,T}(p\,|\,\beta)\} \quad \text{and} \quad \mathscr P_\text{high}(r,\beta) = \{P:r_{t,T}(p\,|\,\beta) < r\}.
\]
Due to $r_{t,T}(p\,|\,\beta)$ being nonincreasing in $p$, these price sets are disjoint and monotone in the sense that any element of $\mathscr P_\text{low}(r,\beta)$ is no greater than any element of $\mathscr P_\text{high}(r,\beta)$. Previously, in the proof of Theorem \ref{thm:threshspot} (in Appendix \ref{sec:appendix}), the risk-aversion was fixed so the price sets did not need to explicitly depend on 
$\beta$; however, now that we are studying the impact of risk-aversion, it is crucial to analyze this dependence. See Figure \ref{fig:pricesets_fast} for an illustration of the price sets: the two horizontal axes represent the price $P$ and the vertical axis is the resource value $r$. The diagram shows that (by the induction hypothesis) the charging threshold $r_{t,T}(p\,|\,\beta)$ at $t$ is nondecreasing as we move from $\beta$ (the black curve) to $\beta'$ (the red curve). This, in turn, alters the low and high price sets shown on the bottom horizontal axis for $\beta$ (in black) and the top horizontal axis for $\beta'$ (in red).

\begin{figure}[h]
        \centering
        \includegraphics[width=\textwidth]{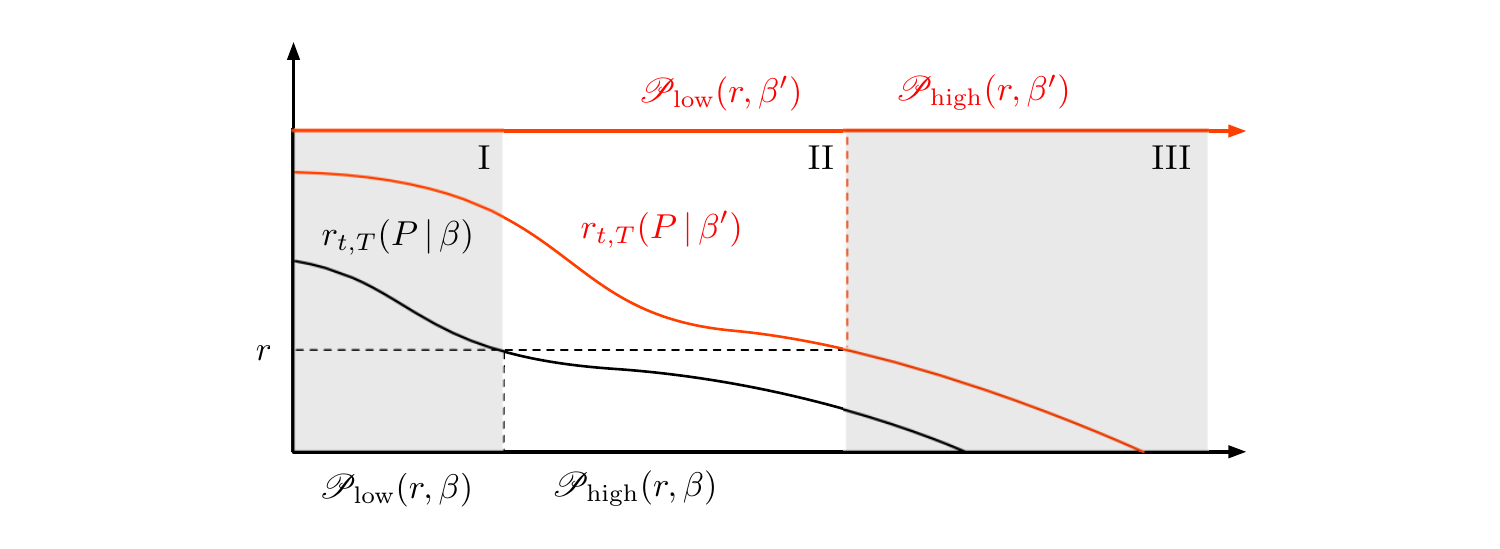}
        \caption{Illustration of the Price Sets for the Fast Charging Case}
        \label{fig:pricesets_fast}
\end{figure}

Doing casework on each of the spot price intervals, we can rewrite (\ref{eq:valuefunc}) for a fixed $r$ as a piecewise function of $P$: 
\begin{equation*}
V_{t,T}(r,p\,|\,\beta) = \begin{cases}
(r_{t,T}(p\,|\,\beta) - r ) \cdot P -c_f+  \tilde{V}_{t,T}(r_{t,T}(p\,|\,\beta),P\,|\,\beta) &\text{ if } P \in \mathscr P_\text{low}(r,\beta),\\
-c_f+\tilde{V}_{t,T}(r,P\,|\,\beta) &\text{ if } P \in \mathscr P_\text{high}(r,\beta).\\
\end{cases}
\end{equation*}
Differentiating in the resource $r$, we obtain
\begin{equation}
\partial_r V_{t,T}(r,p\,|\,\beta) = \begin{cases}
-P  &\text{ if } P \in \mathscr P_\text{low}(r,\beta),\\
\partial_r \tilde{V}_{t,T}(r,p\,|\,\beta) &\text{ if } P \in \mathscr P_\text{high}(r,\beta).\\
\end{cases}
\label{eq:dr_fast}
\end{equation}
As shown in Figure \ref{fig:pricesets_fast}, when comparing $\beta$ and $\beta'$, there are three cases (I, II, and III) that we need to consider. In Case I, we have $P \in \mathscr P_\text{low}(r,\beta) \cap \mathscr P_\text{low}(r,\beta')$ and it is clear by (\ref{eq:dr_fast}) that $\partial_r V_{t,T}(r,p\,|\,\beta) = \partial_r V_{t,T}(r,P\,|\,\beta') = -P$. Case III is similarly straightforward because when $P \in \mathscr P_\text{high}(r,\beta) \cap \mathscr P_\text{high}(r,\beta')$, we know that by the induction hypothesis
\[
\partial_r V_{t,T}(r,p\,|\,\beta) = \partial_r \tilde{V}_{t,T}(r,p\,|\,\beta) \ge \partial_r \tilde{V}_{t,T}(r,p\,|\,\beta') = \partial_r V_{t,T}(r,P\,|\,\beta').
\]
Now let us move to Case II, when $P \in \mathscr P_\text{high}(r,\beta) \cap \mathscr P_\text{low}(r,\beta')$. Observe the following important property whenever $P \in \mathscr P_\text{high}(r,\beta) = \{P:r_{t,T}(p\,|\,\beta) < r\}$. As characterized in Theorem \ref{thm:optimal_policy}, the threshold $\tilde{r} = r_{t,T}(p\,|\,\beta)$ minimizes the quantity  $\tilde{r} \,P + \tilde{V}_{t,T}(\tilde{r},p \, | \, \beta)$. By convexity of $\tilde{r} \,P + \tilde{V}_{t,T}(\tilde{r},p \, | \, \beta)$ in $\tilde{r}$ (Proposition \ref{prop:conv}), it follows that the derivative $P + \partial_r \tilde{V}_{t,T}(\tilde{r},p \, | \, \beta)$ evaluated at some $\tilde{r} > r_{t,T}(p\,|\,\beta)$ must be nonnegative. Hence, we have for Case II:
\[
\partial_r V_{t,T}(r,p\,|\,\beta) = \partial_r \tilde{V}_{t,T}(r,p\,|\,\beta) \ge -P = \partial_r V_{t,T}(r,P\,|\,\beta')
\]
which completes the argument that $\partial_r V_{t,T}(r,p\,|\,\beta) \ge \partial_r V_{t,T}(r,P\,|\,\beta')$ for any $P$. Because $r$ was arbitrarily fixed, the inequality holds for any $r$ as well. To finish the inductive step, i.e. $\partial_r \tilde{V}_{T-1,T}(r,p\,|\,\beta) \ge \partial_r \tilde{V}_{T-1,T}(r,p\,|\,\beta')$, the proof of the base case can be essentially repeated.
\end{proof}

\subsection{Incompatibilty}
\label{sec:incompat}
What goes wrong in the slow charging case if we wish to employ the fully general $\mathcal B$ from Theorem \ref{thm:thmcompattwo}? As with logic introduced in the proof of Theorem \ref{thm:threshspot} in Appendix \ref{sec:appendix} (see Figure \ref{fig:pricesets}), extending the analysis of Theorem \ref{thm:thmcompattwo} to the slow charging case also requires the definition of \emph{three} price sets: $\mathscr P_\text{low}(r,\beta) = \{P: r+\xmax < r_{t,T}(p\,|\,\beta)\}$, $ \mathscr P_\text{med}(r,\beta) = \{P: r \le r_{t,T}(p\,|\,\beta) \le r+\xmax\}$, and  $\mathscr P_\text{high}(r,\beta) = \{P:r_{t,T}(p\,|\,\beta) < r\}$.

\begin{figure}[h]
        \centering
        \includegraphics[width=\textwidth]{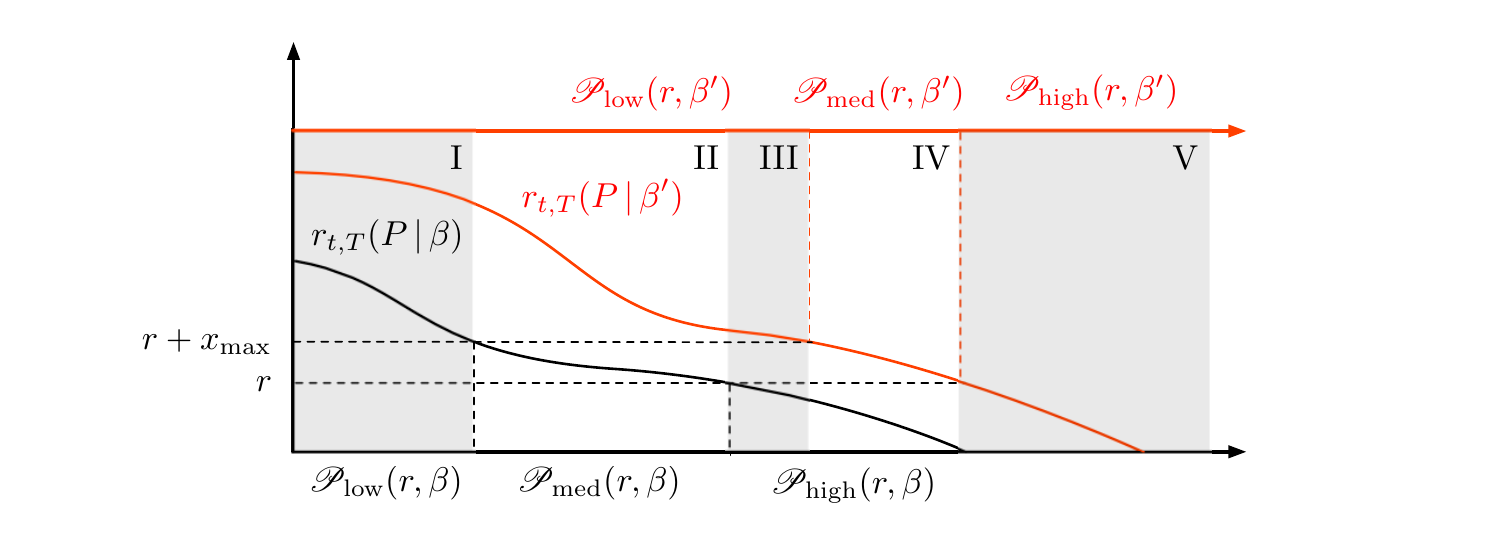}
        \caption{Illustration of the Price Sets for the Slow Charging Case}
        \label{fig:pricesets_slow}
\end{figure}

In Figure \ref{fig:pricesets_slow}, we illustrate a situation where $\mathscr P_\text{high}(r,\beta) \cap \mathscr P_\text{low}(r,\beta')  \not = \varnothing$; this region is denoted ``III.'' The two quantities that we need to compare in this region to show $\partial_r V_{t,T}(r,p\,|\,\beta) \ge \partial_r V_{t,T}(r,P\,|\,\beta')$ are $\partial_r \tilde{V}_{t,T}(r,p\,|\,\beta)$ and $\partial_r \tilde{V}_{t,T}(r+\xmax,P\,|\,\beta')$ --- unfortunately, it is \emph{not necessarily true} that the former is greater than the latter. Note, however, that Case I, II, IV, and V have the correct inequality. One can confirm via numerical experiments that incompatibility over $\mathcal B$ in the slow charging case indeed exists.



\section{Selecting the Degree of Dynamic Risk-Aversion}
\label{sec:select}
So far, we have studied the dynamic EV charging problem along two dimensions: (1) we provide an in-depth characterization on the structure of the optimal policy for a \emph{fixed} degree of risk-aversion $\beta$ in Section \ref{sec:structure}, and (2) we consider \emph{varying} $\beta$ and analyze the issue of compatibility between optimal policies of risk-averse MDPs (specified under a dynamic risk measure built by nesting mean-CVaR) and resource-monotone practical risk metrics in Section \ref{sec:compat}. Now that the approach shown in Figures \ref{fig:practical} and \ref{fig:practical2} has been justified (to an extent) by the theorems of Section \ref{sec:compat}, one question still remains: \emph{how do we select $\beta$ so that the optimal risk-averse MDP policy performs ``well'' under the base model?}

As we previously discussed, the current practice is to heuristically select a large batch of potential $\beta$'s, solve a large number of MDPs, simulate a large number of resulting policies, and then choose one that provides an acceptable trade-off (see, e.g., \cite{Maceira2015}). Unfortunately, this heuristic tuning method requires significant computational resources, negating the original motivation to solve practical reward versus risk problem via the simple and computationally effective dynamic programming approach (refer to Figure \ref{fig:practical}). In addition to CPU time, another weakness of the heuristic tuning approach is that the final choice of $\beta$ must be from the pre-selected batch. 

We now propose a simple approximation procedure that mitigates these issues. It utilizes structural information from our compatibility results so that $\beta$ can be selected in a more principled manner. Suppose that we are given practical metrics $r_\text{base}\,[f^+_\tau(\mathbf{S}^\pi_\tau)]$ and $\rho_\text{base}\,[f^-_\tau(\mathbf{S}^\pi_\tau)]$, along with a set $\bar{\mathcal B} \subseteq \mathcal B$ and $\le$ that are compatible with $f^-_T$ for all $T$. Given a ``trade-off parameter'' $\varepsilon$, our goal is to search for some $\beta \in \bar{\mathcal B}$ such that the practical risk $\rho_\text{base}\,[f^-_\tau(\mathbf{S}^\beta_\tau)] \le \varepsilon$ while $r_\text{base}\,[f^+_\tau(\mathbf{S}^\pi_\tau)]$ is maximized; see the formulation given in (\ref{eq:tradeoffproblem}).


\begin{description}
\item[Step 1 \textnormal{(Solve MDPs).}] Sample a small set of risk-aversion parameters $\{\beta^1, \beta^2, \ldots, \beta^n\}$ and solve the associated MDPs for each $T$. For every $i$, use simulation to obtain estimates $y^i \approx r_\text{base}\,[f^+_\tau(\mathbf{S}^{\beta^i}_\tau)]$ and $z^i \approx \rho_\text{base}\,[f^-_\tau(\mathbf{S}^{\beta^i}_\tau)]$.
\item[Step 2 \textnormal{(Structured Regression).}] Utilize the ``datasets'' of practical reward/risk versus $\beta$
\[
\{(\beta^1, y^1), (\beta^2, y^2), \ldots, (\beta^n, y^n)  \} \quad \text{and} \quad \{(\beta^1, z^1), (\beta^2, z^2), \ldots, (\beta^n, z^n)  \}
\]
to construct regression models $\hat{r}_\text{base}(\beta)$ and $\hat{\rho}_\text{base}(\beta)$ that respectively approximate the practical reward and risk incurred by the MDP policies specified by $\beta$. Leverage the structural compatibility result to complement the regression procedure (see below).
\item[Step 3 \textnormal{(Policy Selection).}] Use the models $\hat{r}_\text{base}(\beta)$ and $\hat{\rho}_\text{base}(\beta)$ to find a parameter $\hat{\beta}$ such that the practical reward $\hat{r}_\text{base}(\hat{\beta})$ is maximized while the practical risk is constrained: $\hat{\rho}_\text{base}(\hat{\beta}) \le \varepsilon$. Solve the MDPs associated with $\hat{\beta}$ and implement the policy.
\end{description}

The main idea of Step 2 is to take advantage of Theorems \ref{thm:thmcompatone} and \ref{thm:thmcompattwo} and employ a form of structured regression to ensure that $\hat{\rho}_\text{base}(\beta)$ is nonincreasing in $\beta$. Many monotone regression techniques are specified only for the one dimensional case \citep{Mukerjee1988,Dette2006}, but following the general idea given in \cite{Ahmadi2014} for convex regression, if we restrict $\hat{\rho}_\text{base}$ to take the form of a polynomial, then the monotonicity constraints can be approximated with \emph{sum of squares} (SOS) constraints. The resulting problem, for an $l_1$ error function, can be solved efficiently using semi-definite programming \citep{Parrilo2003}. For example, suppose we are using the set $\bar{\mathcal B} = \{\beta : (\lambda,\alpha)= \beta_0 = \beta_1 = \cdots =\beta_{T}\}$, the set of risk parameters that are time-independent. This set can be parameterized by two scalars, $\lambda$ and $\alpha$. Letting $\beta^i = (\lambda^i, \alpha^i)$, the structured regression problem is:
\begin{equation*}
\begin{aligned}
& \underset{\text{poly } \hat{\rho}_\text{base} }{\text{minimize}}
& & \sum_{i=1}^n | \, \hat{r}_\text{base}(\lambda^i, \alpha^i) - z^i \,\bigr|\\
& \text{subject to}
& & -\partial_\lambda \,\hat{r}_\text{base}(\lambda, \alpha), \, -\partial_\alpha\, \hat{r}_\text{base}(\lambda, \alpha) \text{ are SOS}.
\end{aligned}
\end{equation*}
We illustrate this procedure using numerical experiments in Section \ref{sec:num}.

\section{Numerical Results and Case Study}
\label{sec:num}
In this section, we investigate our results on a realistic instance of the problem in the presence of a DC fast charging station (e.g., the ones described in Section \ref{sec:intro} by Tesla and ChargePoint). Specifically, we empirically verify the structural results of Theorem \ref{thm:optimal_policy} and Theorem \ref{thm:threshspot}, in addition to the compatibility result (for the fast charging case) of Theorem \ref{thm:thmcompattwo}. We then show the effectiveness of the risk-averse MDP selection procedure described in Section \ref{sec:select}. The  stochastic models of the price process $\{P_t\}$ and the reservation length $\tau$ are informed by market data from CAISO and parking garage inventory data from the city of Santa Monica, California.

\begin{figure}[h]
        \centering
        \includegraphics[width=0.8\textwidth]{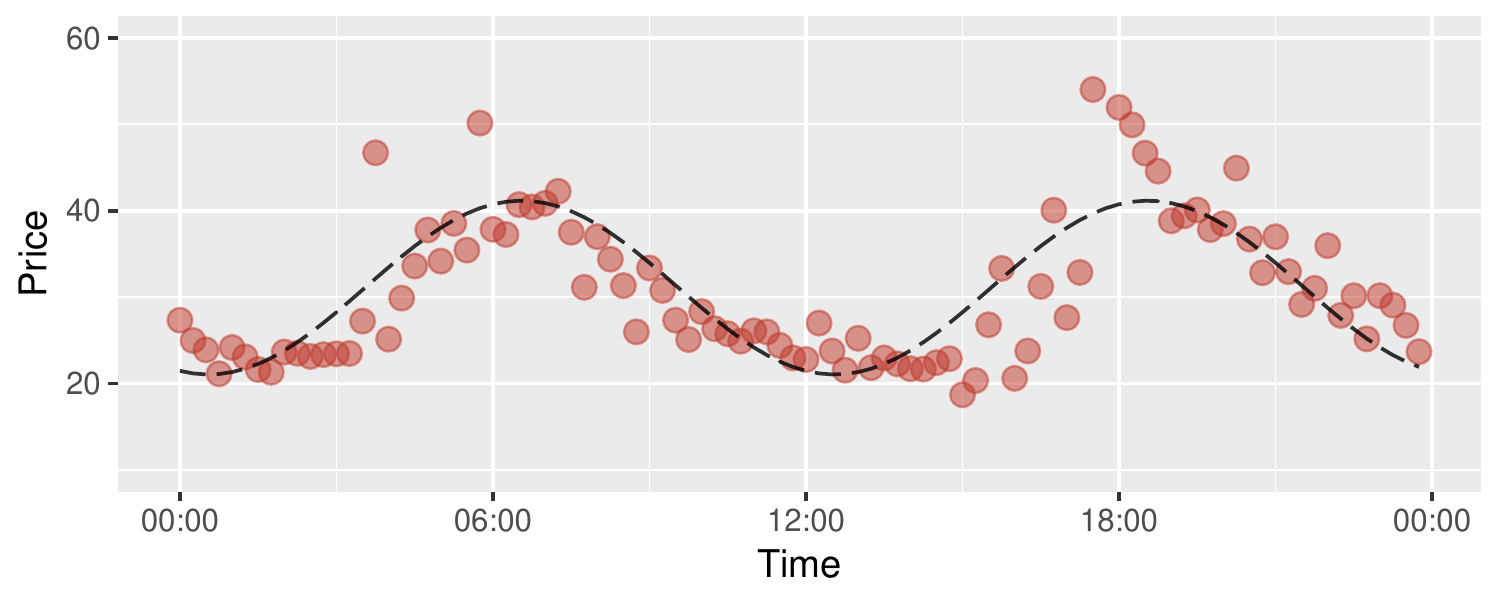}%
        \vspace{-5pt}
        \caption{Seasonality Function $g(t)$ and Average (Across Time) Prices}
                \vspace{-10pt}

        \label{fig:seas}
\end{figure}

\subsection{Stochastic Model \& Problem Parameters}
The spot market in our model is based on a dataset from the CAISO fifteen minute market (the \texttt{BAYSHOR2\char`_1\char`_N001} node) during the winter period December 1, 2016 to February 28, 2017. Since our problem is at the time scale of a few hours at most, we are primarily interested in understanding the \emph{intra-day} seasonality. To accomplish this, for each 15 minute period during the day, we compute the average price across all weekdays in the dataset and then fit a sinusoidal function (with a period of one day) to the resulting time series; see Figure \ref{fig:seas}. The estimated seasonality function is given by
\[
g(t) = 13.586 \, \sin(2\pi t/48) -0.7597 \, \cos(2 \pi t/48) + 34.1362,
\]
where $t$ measured in units of 15 minutes. 

After removing the seasonality, we used maximum likelihood estimation to find the parameters of the $Y_t$ stochastic process, resulting in $\mu_Y = -0.492$, $\kappa_Y = 0.341$, $\sigma_Y = 5.350$, $\mu_J = -0.484$, $\sigma_J = 40.602$, and $\lambda = 0.131$. Figure \ref{fig:combined} provides a comparison between actual and fitted prices that includes both $g(t)$ and $Y_t$. 

\begin{figure}[h]
        \centering
        \includegraphics[width=0.8\textwidth]{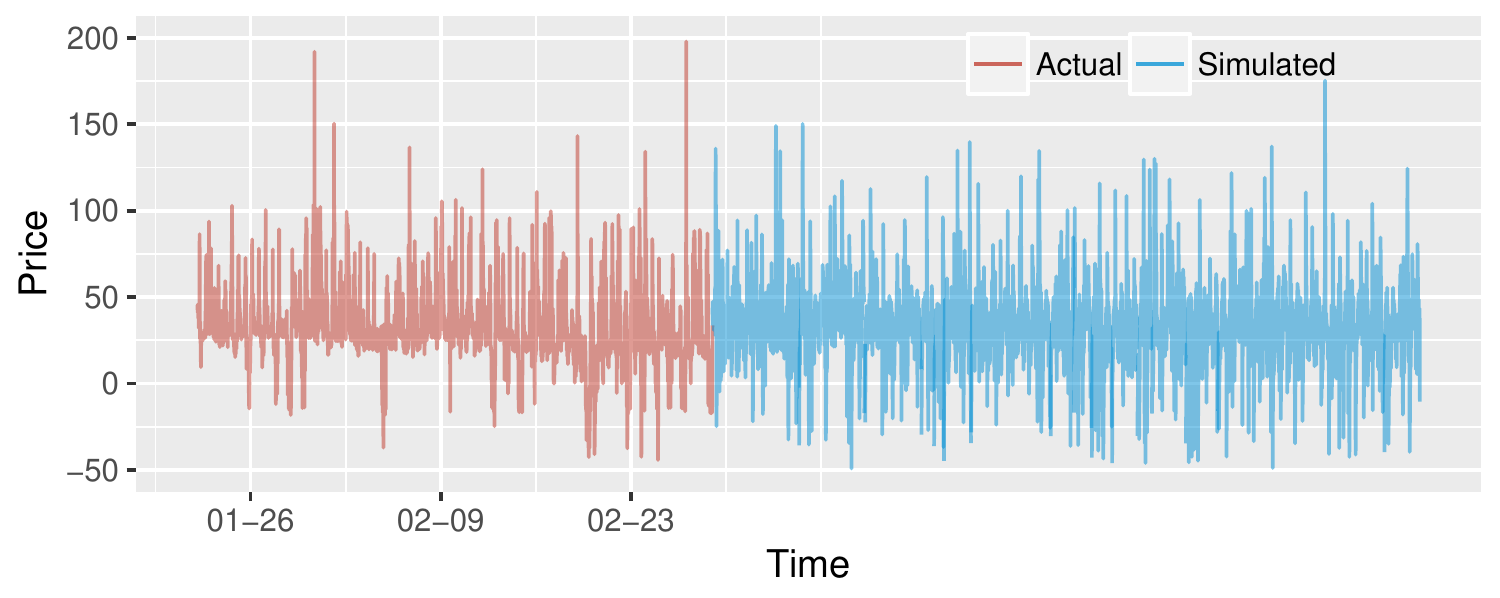}%
                \vspace{-5pt}

        \caption{Actual and Simulated Spot Prices}
        \label{fig:combined}
\end{figure}

Although we do not have access to EV charging reservation data, we examined a large dataset\footnote{Santa Monica Open Data, \texttt{https://data.smgov.net/Transportation/Parking-Lot-Counts/ng8m-khuz}} of parking space availability in public parking garages of Santa Monica, California (updated one every five minutes). An analysis of \emph{the changes in parking garage availability over time} across 14 parking garages and one year of five minute data from May 2016 to April 2017 (nearly 1.5 million records) provided us a estimated distribution for $\tau$. Observations of ``parking time'' were computed by first identifying times of peak utilization (i.e., the local maximums) and then analyzing the rate at which the garage empties. For example, if the garage empties quickly approximately one hour after the peak, then this suggests that many vehicles are parked for a length of one hour. The process yielded over 700,000 ``parking observations'' which were then used to estimate a distribution; see the second panel of Figure \ref{fig:taudist}. The first panel of Figure \ref{fig:taudist} shows a one month sample of the raw data from the parking utilization dataset. It is interesting to note that a significant portion of parking garage patrons park for around 1.25 hours.

\begin{figure}[h]
        \centering
        \begin{subfigure}[b]{0.48\textwidth}
                \centering
                \includegraphics[width=0.9\textwidth]{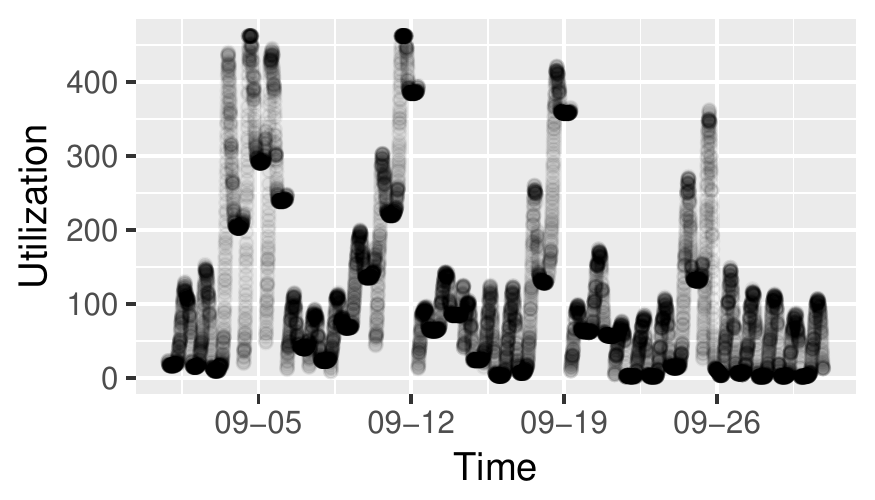}
                \caption{Parking Garage Utilization}
        \end{subfigure}
        \begin{subfigure}[b]{0.48\textwidth}
                \centering
                \includegraphics[width=0.9\textwidth]{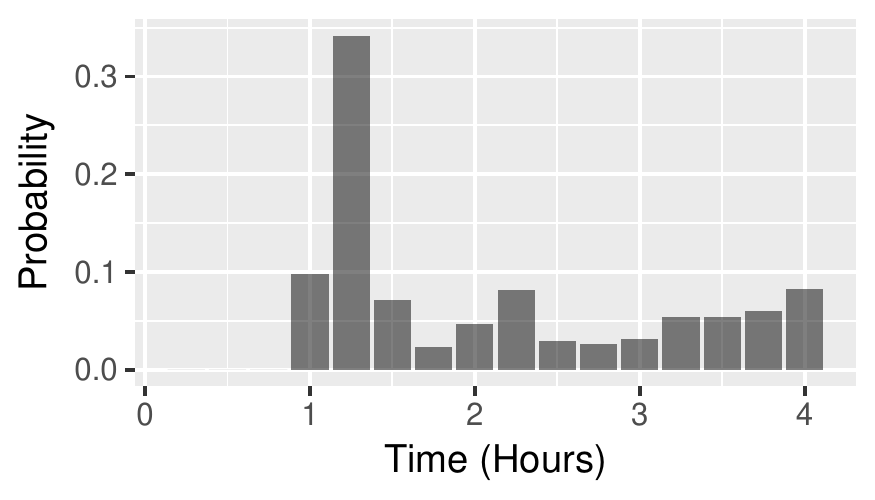}
                \caption{Distribution of $\tau$}
        \end{subfigure}
        \caption{Raw Parking Garage Data \& Estimated Distribution of Reservation Length $\tau$}
        \vspace{-10pt}
        \label{fig:taudist}
\end{figure}

The initial price is chosen to be $P_0=\$35/\text{MWh}$ (we follow the convention of wholesale electricity prices being written in units of MWh$^{-1}$ even though our setting is on the order of KWh) and the battery is assumed to be initially empty: $R_0=0$. The capacity of the battery is $\Rmax = 60$ kWh, in line with the entry version of the Tesla Model S from 2012 to 2015. This is a reasonable choice for a medium sized battery, as there are both EVs with smaller capacities (e.g., Nissan Leaf or BMW i3) and EVs with larger capacities (e.g., newer and higher end Tesla models). We set the charging fee to be $c_f=\$2.00/\text{hour}$. The parameters related to the inconvenience compensation are taken to be $p_\text{ref}= \$0.05/\text{kWh}$, $\gamma_h = 0.01$, and $\gamma_Y(y) = \log(1+\exp(y))$ (a smooth version of $\max(0,y)$), where $y$ is in units of \$/kWh.

\subsection{Exact MDP Results}
In order to properly solve the risk-averse MDPs, discretization is necessary. We use the method of \cite{Roy2003} to discretize $\psi_{t+1}$ to only contain integer outcomes and we ignore the outcomes that occur with probability less than $1.5 \cdot 10^{-3}$. Such a discretization gives us 260 distinct spot prices allows us to utilize the linear programming method for computing the CVaR, as pointed out in \cite{Rockafellar2000}. The resource state is also discretized so that $R_t$ takes values in $\{0, 1, \ldots, \Rmax \}$. Each MDP contains approximately 16{,}000 states per time period, leading to 270{,}000 total states when $\tau = 16$. 

We consider the risk-aversion set $\bar{\mathcal B} = \{\beta : (\lambda,\alpha)= \beta_0 = \beta_1 = \cdots =\beta_{T}\}$. To adequately explore $\bar{\mathcal B}$, we solve a total of 4{,}953 MDPs for parameter values of $\lambda \in \{0, 0.05, \ldots, 1\}$, $\alpha \in \{0.05, 0.1, \ldots, 0.95\}$, and $T \in \{4,5,\ldots,16\}$. Computing the solution to this set of MDPs requires a massive amount of computation and we do it only to illustrate the structural properties of the problem. This step is not necessary in practice, as we will see in Section \ref{sec:approx} that the approximation approach described in Section \ref{sec:select} is quite effective.

\begin{figure}[h]
        \centering
        \includegraphics[width=0.8\textwidth]{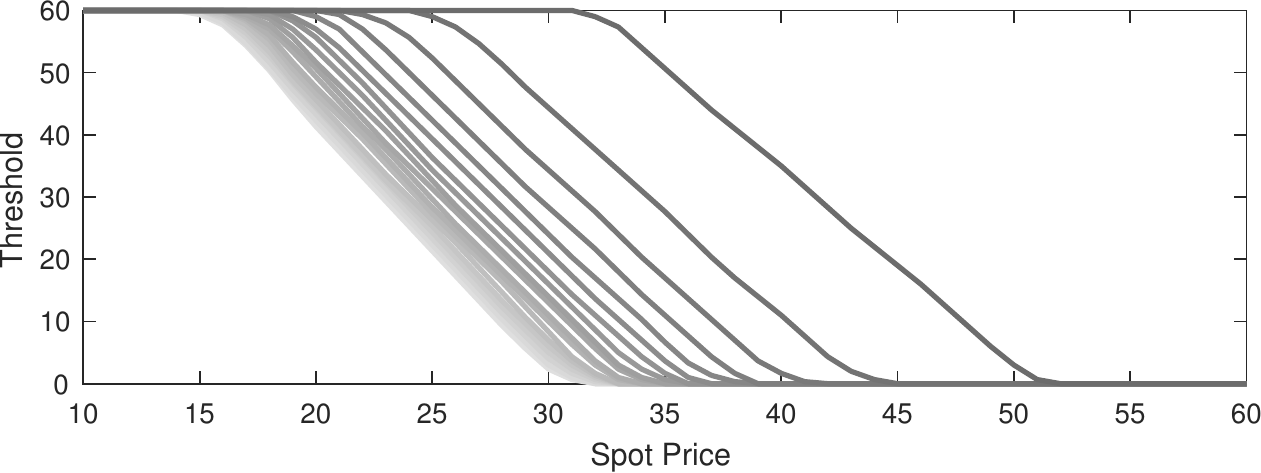}%
        \vspace{-5pt}
        \caption{Thresholds for $\lambda = 0.5$, $T=16$}
        \label{fig:threshold2d}
\end{figure}


We report our results using Figures \ref{fig:threshold2d} and \ref{fig:thresholds3d}. In Figure \ref{fig:threshold2d}, we plot the thresholds as a function of price for a fixed $\lambda = 0.5$. Each curve represents a different value of $\alpha$, which range from $0.05$ to $0.95$ (darker means higher $\alpha$ value). This serves to experimentally verify the statement of Theorem \ref{thm:threshspot}, that the thresholds are nonincreasing in $p$. Moreover, it verifies that the thresholds are nondecreasing in $\alpha$, as shown in the proof of Theorem \ref{thm:thmcompattwo}. We notice that at the for a given degree of risk-aversion, a \$30 change in spot price can alter the threshold by $\Rmax = 60$.
\begin{figure}[h]
        \centering
        \begin{subfigure}[b]{0.48\textwidth}
                \centering
                \includegraphics[width=\textwidth]{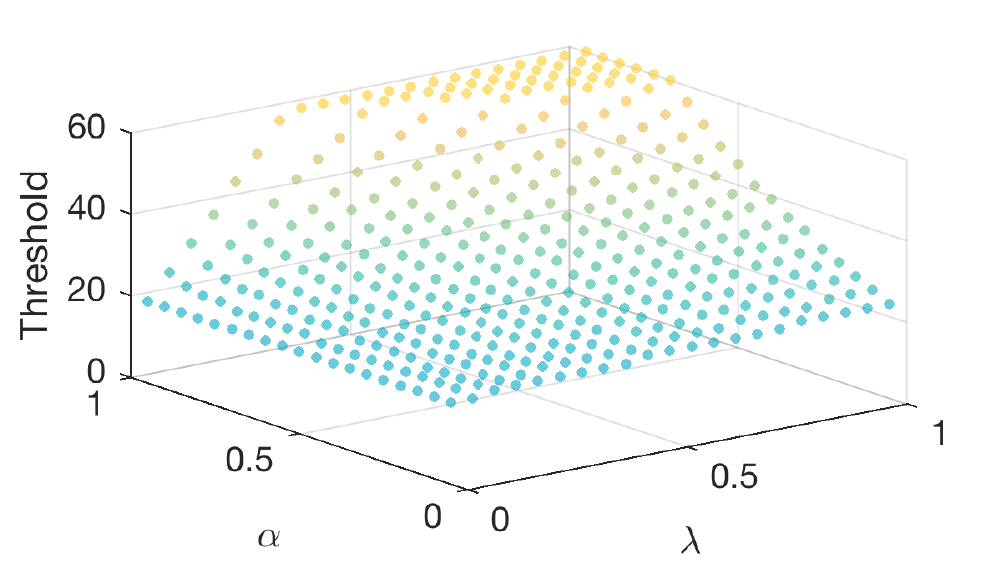}
                \caption{Thresholds for $p=25$, $t=16$}
                \label{subfig:threshold_a}
        \end{subfigure}
        \begin{subfigure}[b]{0.48\textwidth}
                \centering
                \includegraphics[width=\textwidth]{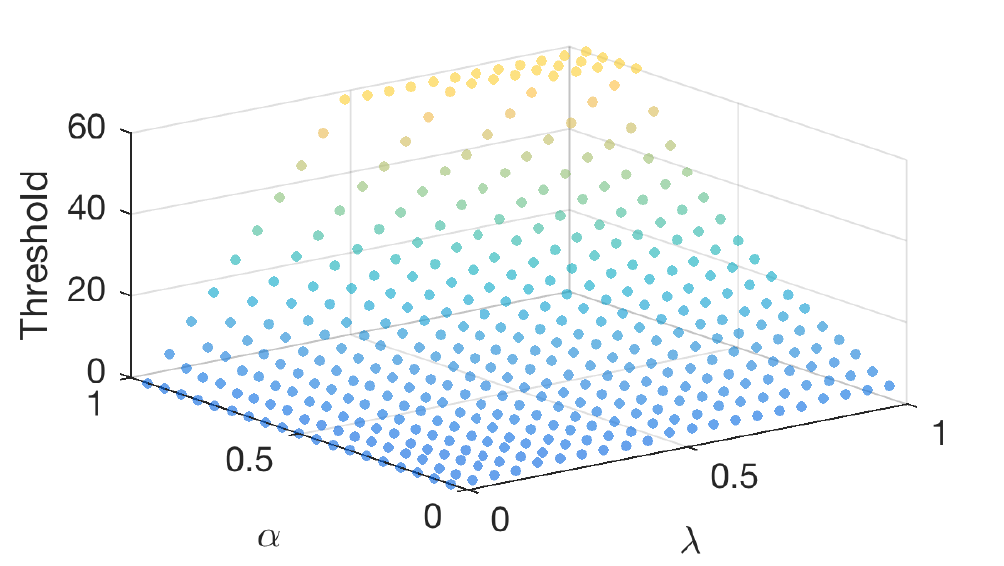}
                \caption{Thresholds for $p = 30$, $t=16$}
                \label{subfig:threshold_b}
        \end{subfigure}
        \caption{Behavior of Thresholds for Fixed $p$}
        \label{fig:thresholds3d}
\end{figure}

Figure \ref{fig:thresholds3d} fixes the spot price $p$ and displays the variation of the threshold as a function of $\beta$ (again, the thresholds are nondecreasing by Theorem \ref{thm:thmcompattwo}) for the MDP associated with $T=16$. We see that the degree of risk-aversion $(\lambda,\alpha)$ can dramatically change the policy: in the risk-neutral case for $p=25$, the threshold $r_{t,T}(p\,|\,\beta) \approx 20$, while in the most risk-averse case, $r_{t,T}(p\,|\,\beta) \approx 60 = \Rmax$ (meaning we should charge as much as possible). This observation emphasizes the importance of selecting an appropriate value for $\beta$, which we discuss in the next section.

\subsection{Selecting \texorpdfstring{$\beta$}{Beta} using Approximation}
\label{sec:approx}
One way to choose $\beta$ would be to compute and simulate a large batch of risk-averse policies corresponding to different values of $\beta$, similar to what is shown in Figure \ref{fig:thresholds3d} and the references discussed in Section \ref{sec:intro}. As discussed in Section \ref{sec:select}, this heuristic tuning method suffers from two major drawbacks: (1) it can be very computationally intense, due to the large number of $\beta$'s that needs to be explored, and (2) the final choice of $\beta$ necessarily comes from the pre-selected batch. For instance, a MATLAB implementation on two 14 core, 3.3 GHz Intel Haswell CPUs requires 2.5 minutes for each instance of $\beta$ (i.e., 13 MDPs with different horizons $T$). Solving these MDPs for 381 combinations of $\lambda$ and $\alpha$ thus requires nearly 16 hours of CPU time (or without parallelization, \emph{days} of CPU time).

We now illustrate the alternative approach that was outlined in Section \ref{sec:select}, wherein a small sample of $\beta$'s are chosen as ``observations'' that are fed into a structured regression model to estimate the practical reward and risk over the entire set $\bar{\mathcal B}$. We assume that the practical reward is given by (\ref{eq:practicalreward}), the negative of the total expected cost. Let $\delta = 0.3$ and suppose the practical risk is given by 
\[
\rho_\text{base}\,[f^-_\tau(\mathbf{S}^\pi_\tau)] = \mathbf{E} \bigl[ \mathbf{1}_{\{R_\tau / \Rmax \, \le \, (1-\delta)\}} \bigr] = \mathbf{P}(R_\tau / \Rmax \, \le \, (1-\delta)),
\]
the probability that the vehicle is less than $70\%$ charged.\footnote{We choose 70\% because a charge of around 80\% is usually considered an adequate charge. For example, Tesla states that ``charging above 80\% isn't typically necessary'' at  \texttt{http://www.tesla.com/supercharger}.} Let $\varepsilon \in [0,1]$ be the ``trade-off parameter.'' We consider the case where the decision maker aims to solve the practical problem (\ref{eq:tradeoffproblem})
by appealing to the framework described in Figure \ref{fig:practical} and searching over policies $\pi$ that optimize \emph{some instance} of the risk-averse MDP (similar to what is done in \cite{Maceira2015}). We proceed with the three steps outlined in Section \ref{sec:select}.

\vspace{7pt} \noindent \textbf{Step 1.} We let $\{\beta^1, \beta^2, \ldots, \beta^n\}$ be all combinations of $\lambda \in \{0,0.1, 0.2, \ldots, 1\}$ and $\alpha \in \{0.05, 0.15, \ldots, 0.95\}$. The corresponding MDPs are solved and the policies are simulated $N=100{,}000$ times to obtain estimates of the practical reward and risk. Note that we are using a coarser grid than before to save computation time, allowing the regression procedure to fill in the blanks.

\vspace{7pt} \noindent \textbf{Step 2.}  We elect to use a polynomial representation for $\hat{r}_\text{base}(\beta)$ and $\hat{\rho}_\text{base}(\beta)$, so that
\[
\hat{r}_\text{base}(\beta) = \mathbf{w}_{\Gamma}^\mathsf{T} \, \phi(\lambda, \alpha) \quad \text{and} \quad  \hat{\rho}_\text{base}(\beta) = \mathbf{w}_{\Psi}^\mathsf{T} \, \phi(\lambda, \alpha),
\]
where $\mathbf{w}_{\Gamma}$ and $\mathbf{w}_{\Psi}$ are the weight vectors and $\phi(\lambda, \alpha)$ is the vector of all degree 10 monomials (66 terms).

\begin{figure}[h]
        \centering
        \begin{subfigure}[b]{0.48\textwidth}
                \centering
                \includegraphics[width=\textwidth]{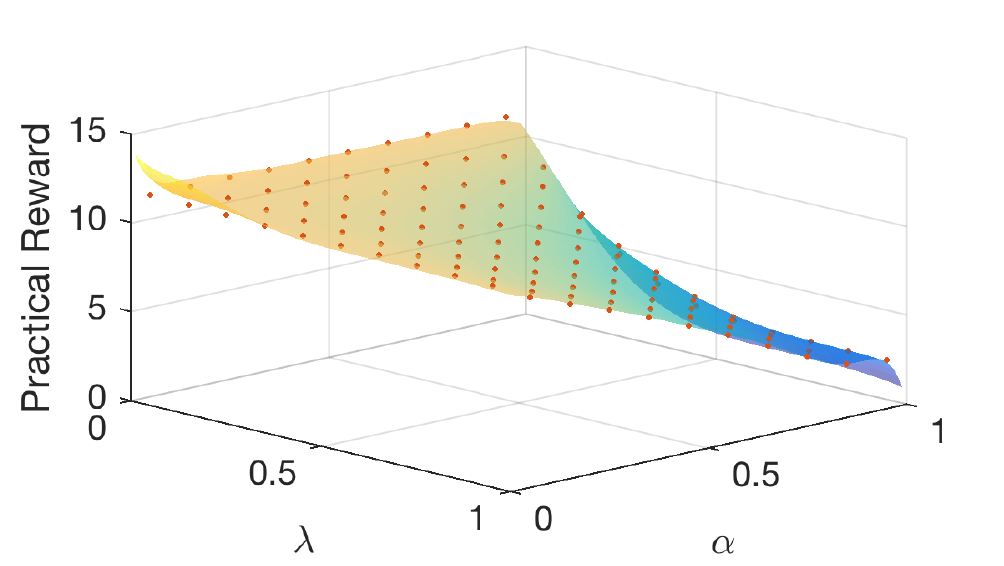}
                \caption{Approximated Practical Reward $\hat{r}_\text{base}$} 
                \label{subfig:threshold_a}
        \end{subfigure}
        \begin{subfigure}[b]{0.48\textwidth}
                \centering
                \includegraphics[width=\textwidth]{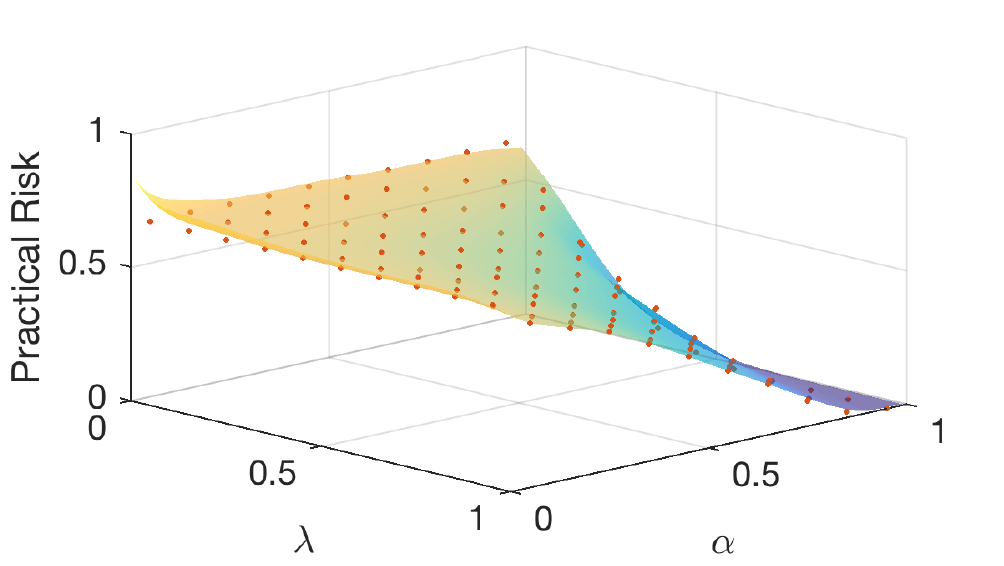}
                \caption{Approximated Practical Risk $\hat{\rho}_\text{base}$}
                \label{subfig:threshold_b}
        \end{subfigure}
        \caption{Practical Reward and Risk Metrics using Regression}
        \label{fig:approxprac}
\end{figure}

Using the SPOT toolbox \citep{Megretski2013} and the MOSEK solver, we carried out an $l_1$ regression procedure to compute the coefficients $\mathbf{w}_{\Gamma}, \mathbf{w}_{\Psi}$ --- as mentioned in Section \ref{sec:select}, sum-of-squares constraints \citep{Ahmadi2014,Ahmadi2015} are used to compute $\mathbf{w}_{\Psi}$ to enforce the compatibility result of Theorem \ref{thm:thmcompattwo}. The regression procedure requires roughly four seconds of CPU time (2 seconds for the reward and 2 seconds for the risk).

Figure \ref{fig:approxprac} shows our estimated practical metric surface over $\beta \in \bar{ \mathcal B}$ along with the metrics associated with the sample set of MDPs as red dots. In particular, we point out that the policy obtained by solving the \emph{risk-neutral} (RN) MDP, i.e., when $\lambda = 0$, generates a profit of $\$11.30$, but leaves the EV inadequately charged 65.5\% of the time.\footnote{One may point out that surely we could change the inconvenience compensation scheme to mitigate this behavior. However, the idea of risk-averse optimization is to take the ``physical characteristics'' of the problem \emph{as given} (i.e., fix the current compensation scheme) and produce risk-averse policies.} This level of risk would not be acceptable in practical settings.

\vspace{7pt} \noindent \textbf{Step 3.} In the final step of the procedure, we use the approximations $\mathbf{w}_{\Gamma}, \mathbf{w}_{\Psi}$ to select a choice of $\beta$. Specifically, we solve the problem
\[
\displaystyle \text{maximize}_{\beta \in\, [0,1] \times (0,1)} \; \; \hat{r}_\text{base}(\beta) \; \; \text{subject to} \; \; \hat{\rho}_\text{base}(\beta) \le \varepsilon,
\]
where the maximizer is denoted $\hat{\beta}(\varepsilon) = (\hat{\lambda}(\varepsilon), \hat{\alpha}(\varepsilon))$. This can be thought of as a ``recommended degree of risk-aversion'' for the particular level of $\varepsilon$. The policy selected for eventual implementation is $\pi^{\hat{\beta}(\varepsilon)}_{\tau}$ --- the optimal policy with respect to the risk-averse MDP specified using $\hat{\beta}(\varepsilon)$. Of course, the solution $\hat{\beta}(\varepsilon)$ is generally not one of the sampled points, i.e.,  $\hat{\beta}(\varepsilon) \not \in \{\beta^1 ,\beta^2, \ldots, \beta^n\}$, so the final step before implementation is to solve the MDPs associated with $\hat{\beta}(\varepsilon)$. The recommended $\hat{\beta}(\varepsilon)$ and the practical reward and risk of the associated optimal policies for ten reasonable values for $\varepsilon$, ranging from $0.01$ to $0.10$, are shown in Table \ref{table:results}. The table also provides a comparison to the default policy (charge continuously) and the optimal risk-neutral (RN) solution, which we have given in the first and last rows, respectively.

\renewcommand{\arraystretch}{1.2}
\begin{table}[h]
\centering
\small
\begin{tabular}{@{}cccccccc@{}}\toprule
 \quad Trade-off $\varepsilon$  \quad &  \quad $\hat{\lambda}(\varepsilon)$  \quad &  \quad $\hat{\alpha}(\varepsilon)$  \quad &  \quad  Reward (\% of RN) \quad &  \quad  Risk (\% of RN)  \quad \\
 \midrule
Default & -    & -     & \$1.90 (16.8\%) & 0.000 (0.0\%)\\
0.01  & 0.937 & 0.914 & \$2.74 (24.3\%) & 0.002 (0.3\%)\\
0.02  & 0.876 & 0.901 & \$2.85 (25.3\%) & 0.009 (1.3\%)\\
0.03  & 0.827 & 0.896 & \$2.97 (26.3\%)  & 0.019 (2.9\%)\\
0.04  & 0.793 & 0.891 & \$3.11 (27.5\%) & 0.033 (5.0\%)\\
0.05  & 0.761 & 0.888 & \$3.25 (28.8\%) & 0.041 (6.2\%)\\
0.06  & 0.747 & 0.881 & \$3.39 (30.0\%) & 0.054 (8.2\%)\\
0.07  & 0.715 & 0.881 & \$3.52 (31.2\%) & 0.068 (10.4\%)\\
0.08  & 0.681 & 0.883 & \$3.66 (32.4\%) & 0.072 (10.9\%)\\
0.09  & 0.669 & 0.878 & \$3.80 (33.7\%) & 0.090 (13.7\%)\\
0.10 & 0.61  & 0.89  & \$3.94  (34.9\%) & 0.095 (14.5\%)\\
RN & - & - & \$11.30 (100.0\%) & 0.655 (100.0\%)\\
\bottomrule
\end{tabular}
\caption{Practical Reward and Risk of the Recommended Risk-Aversion Levels}
\label{table:results}
\end{table}

Next, we show in Figure \ref{fig:betapath} the path of $\hat{\beta}(\varepsilon) = (\hat{\lambda}(\varepsilon), \hat{\alpha}(\varepsilon))$ as $\varepsilon$ varies from 0.05 up to 0.6 (since we know that the RN solution incurs a practical risk of 0.655). Note that this figure is purely for methodological interest, as it is unlikely that $\varepsilon$ values greater than 0.10 would be useful in practice. Although approximations from the regression are used to create the path of $\beta$, Figure \ref{fig:betapath} suggests that the relationship between (\ref{eq:tradeoffproblem}) and the risk-averse MDP policies is complex and that there may not a structure that is simple to discern. Additional research in this area is needed.

\begin{figure}[h]
        \centering
        \includegraphics[width=0.8\textwidth]{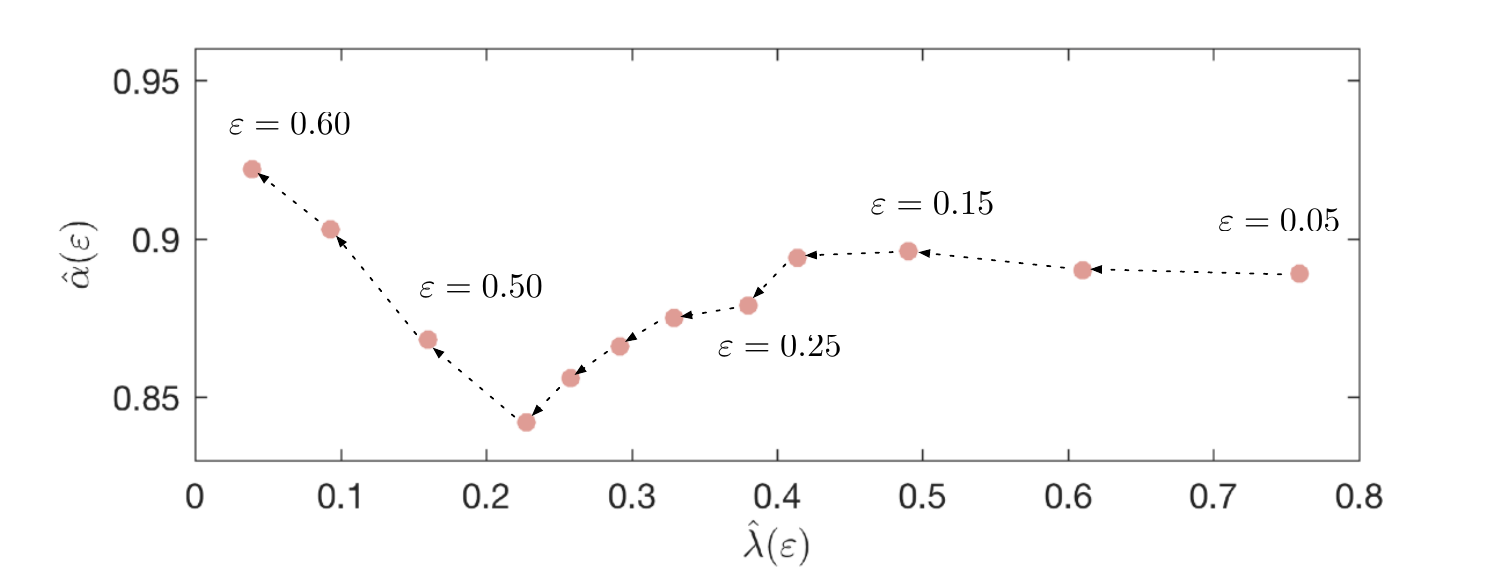}%
        \caption{The Path of $\hat{\beta}$}
                \vspace{-20pt}
        \label{fig:betapath}
\end{figure}

\section{Insights and Discussion}
Our problem focuses on dynamic EV charging in the presence of a reservation system. However, ``dynamic EV charging'' may occur under various other related settings and system designs, and we believe that the results from our study apply there as well. Examples of these related problems are given below, along with a commentary on how our model can be extended to these settings.
\begin{itemize}
\item[--] In a first-come, first-served EV charging station without reservations, the control system may attempt to ``guess'' at the return time of the customer and attempt to delay charging to reduce costs. In this model, the distribution of $\tau$ (the customer arrival time) is incorporated into the MDP, while in our current model, the distribution of $\tau$ defines multiple instances of the MDP.
\item[--] When a charging station has multiple ports that share the same power source (see, e.g., ChargePoint Express Plus and ChargePoint Power Cube), it is necessary to decrease the power output when additional vehicles join, creating the possibility that vehicles are not charged up to the customers' expectations. Our model can be extended to this case by specifying a pre-determined rule for how power capacity should be split between vehicles; for example, in the interest of fairness, the rule could be that the power is always split evenly. Our model could then be applied independently at each port with the addition that the charging capacity $\xmax$ is now a random variable (that decreases whenever additional vehicles are plugged into the station and increases when vehicles leave).
\item[--] Vehicle-to-grid systems take advantage of the storage capacity of idle electric vehicles, allowing them to provide ancillary services to the grid in times of need \citep{Rotering2011}. For our model to handle this case, we would allow the charging decision $x_t$ to take both postive and negative values (negative values represent the delivery of energy to the grid).
  \end{itemize}
A common thread of the above is that the customer is potentially disappointed when the charge level is inadequate at the time of use. Compensation for the inconvenience, in whatever form, monetary or otherwise, should then be provided. In this paper, we show an example where, under a reasonable compensation scheme, the optimal risk-neutral policy to the dynamic EV charging problem leaves the vehicle inadequately charged nearly \emph{two-thirds of the time}. This suggests that for the broader class of dynamic EV charging problems, simply optimizing with respect to the expected total cost under any given inconvenience compensation scheme may produce undesirable policies. 

One may point out that a trade-off between charging costs and ``risk'' (in the form of the compensation) is already being made in the \emph{risk-neutral problem}. This is true to an extent; the issue is that trade-off induced by these competing costs is, in some sense, misaligned, as the firm cannot implement a policy that provides an inadequate charge 65.5\% of the time. Thus, the question is: if we take the physical characteristics of the problem as fixed (i.e., we do not re-write the compensation scheme), then how can we produce a policy that is more risk-averse? The introduction of risk measures provides an additional lever to ``tip the scales'' when the cost function itself provides an incorrect trade-off (utility functions are another approach). By implementing our risk-averse MDP/practical metrics approach on our reservation-based EV charging model, we are able to discover policies with much more reasonable trade-offs (see Table \ref{table:results}).

Most EV charging stations today use the default continuous charging policy, so it is perhaps more informative for our results to be viewed from the point of view of this standard practice. We see from Table \ref{table:results} that if the manager of the charging station is willing to incur a small risk of $0.002$, then a dynamic charging policy can increase profits by 44\% (from \$1.90 to \$2.74). This suggests that EV charging stations can be operated at a significantly higher level of profitability if dynamic charging is adopted.

\section{Conclusion}
In this paper, we study EV charging in a setting where one attempts to exploit volatility in the spot market by charging only when the prices are low. At the same time, the risk of unsatisfied customers who may return to undercharged vehicles must be controlled. The problem is specified as a risk-averse MDP formulated using a dynamic (nested) risk measure, but we point out that the objective function has no practical interpretation. Our main methodological contribution lies in formalizing a procedure wherein risk-averse MDPs are used as a computational tool to produce policies that are evaluated using ``practical'' reward and reward metrics (termed the \emph{base model}). We define a notion of \emph{risk compatibility} and prove that under certain conditions, additional risk-aversion in the dynamic risk measure sense leads to additional risk-aversion in the practical sense as well. Next, we propose an approximation procedure that allows a decision-maker to easily search within a set of risk-averse MDPs for a ``practical policy'' using a regression model on a few solved instances. Finally, we provide numerical results based on real data that support the effectiveness of the approximation procedure. The numerical experiments suggest that dynamic EV charging can significantly improve the profitability of EV charging stations when a small amount of risk is incurred.

\clearpage
\appendix
\section{Proofs}
\label{sec:appendix}

\propconv*
\begin{proof}
First note that $V_{t,T}(s\,|\,\beta)$ is trivially convex in $r$. We let the induction hypothesis be that $V_{t+1,T}( \, \cdot \, | \, \beta)(s)$ is strictly convex in $r$, from which we can show (i), (ii), and (iii) for $t$.
For a feasible decision $x \in \mathcal X(r)$, let
\begin{align*}
Q_{t,T}(s,x\,|\,\beta) = xp - c_f + \tilde{V}_{t,T}(r+x,p\, | \, \beta)
\end{align*}
and $\mathcal Q = \{(r,x): x \in \mathcal X(r), r\in \mathcal R\}$. From the convexity of the mapping $(r,x) \mapsto V_{t+1,T}( r + x, p \, e^{-\kappa_Y} + \psi_{t+1}\,|\,\beta)$ on $\mathcal Q$ for each realization of $\psi_{t+1}$ and the convexity (in the space of random variables) of $\rhooperator_{\beta_t}$, we obtain that
\[
(r,x) \mapsto \tilde{V}_{t,T}(r+x,p\, | \, \beta) = \rhooperator_{\beta_t} \bigl[V_{t+1,T}( r + x, p \, e^{-\kappa_Y} + \psi_{t+1}\,|\,\beta)  \bigr]
\]
is convex on $\mathcal Q$. This implies property (ii) and it follows that $Q_{t,T}(s,x\,|\,\beta)$ is convex in $(r,x)$ on $\mathcal Q$, which verifies (i).

Since we can write $V_{t,T}(s\,|\,\beta) = \min_{x \in \mathcal X(r)} Q_{t,T}(s,x\,|\,\beta)$ and $\mathcal X(r)$ is nonempty, the well-known property that convexity is preserved under minimization can be applied; by \cite[Proposition B-4]{Heyman2003}, $V_{t,T}(s\,|\,\beta)$ is convex in $r$. This proves property (iii) and we have completed the inductive step and also verified (i) and (ii) along the way.
\end{proof}

\propmonop*
\begin{proof}
We first focus on monotonicity in $p$ and proceed via induction. For each resource state $r \in \mathcal R \setminus \{\Rmax\}$, the function $V_{T,T}(s \, | \, \beta)$ is increasing in the spot price $p$ by Assumption \ref{ass:compensationlip}. This completes the base case. We assume the same is true for $V_{t+1,T}(r,p\,|\,\beta)$ and complete the inductive step by proving that the property holds for $V_{t,T}(r,p \, | \, \beta)$. To do so, we consider 
\[
\tilde{V}_{t,T}(r+x,p\, | \, \beta) = (1-\lambda_t) \, \mathbf{E} \bigl[V_{t+1,T}( r+x, p \, e^{-\kappa_Y} + \psi_{t+1}\,|\,\beta) \bigr] + \lambda_t \, c_{t,T}(r+x,p\,|\,\beta).
\]
Since $\lambda$ can take values of $0$ or $1$, we analyze both terms. By the induction hypothesis, the terms within the expectation are increasing in $p$ for every realization of $\psi_{t+1}$, so the first term is increasing in $p$.

Now, we consider $c_{t,T}(r+x,p\,|\,\beta)$. Let $p' > p$ so that by the induction hypothesis, we know
there exists a deterministic $\epsilon' > 0$ such that
\[
V_{t+1,T}( r+x, p \, e^{-\kappa_Y} + \psi_{t+1}\,|\,\beta) + \epsilon' \le V_{t+1,T}( r+x, p' \, e^{-\kappa_Y} + \psi_{t+1}\,|\,\beta) \quad a.s.
\]
Hence, by the translation invariance and monotonicity axioms of coherent risk measures \cite{Artzner1999}, we have $c_{t,T}(r+x,p\,|\,\beta) + \epsilon' \le c_{t,T}(r+x,p'\,|\,\beta)$. We have shown that $\tilde{V}_{t,T}(r+x,p\, | \, \beta)$ is increasing. Thus, $xp - c_f + \tilde{V}_{t,T}(r+x,p\, | \, \beta)$ is increasing in $p$ for every $x \in \mathcal X(r)$, implying that $V_{t,T}(\,\cdot\,|\,\beta)$ is too.
\end{proof}

\thmpolicy*
\begin{proof}
To obtain an equivalent formulation of (\ref{eq:postdecbellman}) in terms of the post-decision resource state, we make a substitution $\tilde{r} = r+x$ and $x=\tilde{r}-r$. Define $\mathcal R(r) = \{\tilde{r} : r \le \tilde{r} \le \min\{r+\xmax, \Rmax\}\}$ as the set of possible post-decision resource levels. We have
\begin{align}
x_{t,T}(r,p \, | \, \beta) \in \Bigl(\argmin_{\tilde{r} \in \mathcal R (r)} \; \tilde{r}\,p + \tilde{V}_{t,T}(\tilde{r},p \, | \, \beta)\Bigr) -r.
\label{eq:optpolicytilde}
\end{align}
Note that the objective is convex in $\tilde{r}$ by Proposition \ref{prop:conv}.
Since $\mathcal R(r) \subseteq \mathcal R$, consider the solution $r_t(p\,|\,\beta)$ to the relaxed optimization problem
\begin{equation}
r_{t,T}(p\,|\,\beta) \in \argmin_{\tilde{r} \in \mathcal R} \; \tilde{r} \, p + \tilde{V}_{t,T}(\tilde{r},p \, | \, \beta).
\label{eq:rt}
\end{equation}
Clearly, if $r_{t,T}(p\,|\,\beta) \in \mathcal R(r)$, then it is an optimal solution to the optimization problem within (\ref{eq:optpolicytilde}) and the optimal decision is $r_{t,T}(p\,|\,\beta) - r$. There are two ways in which $r_{t,T}(p\,|\,\beta) \not \in \mathcal R(r)$ can happen. First, if $r_{t,T}(p\,|\,\beta) < r$, then by convexity, $\tilde{r} \, p+ \tilde{V}_{t,T}(\tilde{r},p \, | \, \beta)$ is nondecreasing on $\mathcal R(r)$ and an optimal solution is $\tilde{r} = r$, implying an optimal decision of $0$. The second case is when $r+\xmax < r_{t,T}(p\,|\,\beta) \le \Rmax$. Again, by convexity, it must be true that $\tilde{r} \, p + \tilde{V}_{t,T}(\tilde{r},p \, | \, \beta)$ is nonincreasing on $\mathcal R(r)$ and an optimal solution is $\tilde{r} = r+\xmax$. This necessarily gives us an optimal decision of $\xmax$. Finally, combining these three cases allows us to conclude the structure of the optimal policy stated in the theorem. 
\end{proof}

\lemlip*
\begin{proof}
First, the terminal cost function
\[
V_{t,T}(r,p\,|\,\beta) = \bigl[ 1+\gamma_h\, h_t(r) + \rhooperator_{\beta_T}  [ \gamma_Y(p \, e^{-\kappa_Y} + \psi_{T+1,Y}) ] \bigr] \, h_t(r) \, p_\text{ref}
\]
is clearly Lipschitz in $p$ for each $r$ by Assumption \ref{ass:compensationlip} and the inequality $|\rhooperator_{\beta_t}(A) - \rhooperator_{\beta_t}(B)| \le \rhooperator_{\beta_t} (|A-B| )$.  For each fixed $r$, we denote the Lipschitz parameter by $L_{V_{T,T}|\,\beta}(r)$ and then let $L_{V_{T,T}|\,\beta} \!\! = \max_{r \in \mathcal R} L_{V_{T,T}|\,\beta}(r)$. We now proceed by backward induction on $t$. Using the inequality $|\min f - \min g | \le \max |f-g|$, we have
\begin{equation}
\bigl|V_{t,T}(r,p\,|\,\beta) - V_{t,T}(r,p'\,|\,\beta) \bigr| \le \max_{x \in \mathcal X(r)} \, \bigl |  \tilde{V}_{t,T}( r+x,p\,|\,\beta) -  \tilde{V}_{t,T}( r+x,p'\,|\,\beta)\bigr|.
\label{eq:lipschitzp}
\end{equation}
Using the definition of the post-decision value function and (\ref{eq:psidef}), we have
\[
\tilde{V}_{t,T}( r+x,p\,|\,\beta) =   \rhooperator_{\beta_t} \bigl[V_{t+1,T}( r+x, p \, e^{-\kappa_Y} + \psi_{t+1}\,|\,\beta)\bigr].
\]
Again using the inequality $|\rhooperator_{\beta_t}(A) - \rhooperator_{\beta_t}(B)| \le \rhooperator_{\beta_t} (|A-B| )$ along with (\ref{eq:lipschitzp}), we conclude that
\[
\bigl|V_{t,T}(r+x,p\,|\,\beta) - V_{t,T}(r+x,p'\,|\,\beta) \bigr| \le e^{-\kappa_Y}  L_{V_{t+1,T}|\,\beta} \, |p - p'|
\]
for any $r+x$. Thus, we can set $L_{V_{t,T}|\,\beta} = e^{-\kappa_Y} L_{V_{t+1,T}|\,\beta}$ to complete the proof.
\end{proof}

\lemdiffvar*
\begin{proof}
We apply the simple property that value at risk is invariant under a monotone transformation in the following sense. Consider a random variable $X$ and an increasing function $l$. Then, since $\P[X \le \text{VaR}_{\alpha_t}(X)] = \P[l(X) \le l(\text{VaR}_{\alpha_t}(X))] = \alpha$, it follows that $\text{VaR}_{\alpha_t}(l(X)) = l(\text{VaR}_{\alpha_t}(X))$.
By the monotonicity property of Proposition \ref{prop:monop}, we have 
\[
v_{t,T}(r,p \, | \, \beta) = \text{VaR}_{\alpha_t}[V_{t+1,T}(r,p\,e^{-\kappa_Y} + \psi_{t+1}\,|\,\beta)] = V_{t+1,T}(r,p\,e^{-\kappa_Y} + \text{VaR}_{\alpha_t}(\psi_{t+1})\,|\,\beta)
\]
We conclude by noting that the Lipschitz property of Lemma \ref{lem:lip} implies $V_{t+1,T}(r, \,\cdot\,|\,\beta)$ is differentiable almost everywhere.
\end{proof}

\lemsensitivity*
\begin{proof}
The first part of the equation is simply an interchange of differentiation and expectation:
\[
\partial_r \mathbf{E}\bigl[ V_{t+1}^\beta(r,p \, e^{-\kappa_Y} + \psi_{t+1}) \bigr] = \mathbf{E}\bigl[ \partial_r V_{t+1}^\beta(r,p \, e^{-\kappa_Y} + \psi_{t+1}) \bigr].
\]
By Proposition \ref{prop:conv}, $V_{t+1}^\beta(r,p \, e^{-\kappa_Y} + \psi_{t+1})$ is Lipschitz on $[0,\Rmax]$ and thus the interchange follows by the dominated convergence theorem. To show the second part, we first apply \cite[Theorem 3.1]{Hong2009} to obtain
\begin{equation}
\begin{aligned}
&\partial_r \textnormal{CVaR}_{\alpha_t}[V_{t+1,T}(r,p \, e^{-\kappa_Y} + \psi_{t+1}\,|\,\beta)] \, |_{r=r_0} \\
&=   \mathbf{E}\bigl[ \partial_r V_{t+1,T}(r,p \, e^{-\kappa_Y} + \psi_{t+1} \, | \, \beta) \, | \, V_{t+1,T}(r,p \, e^{-\kappa_Y} + \psi_{t+1} \, | \, \beta) \ge v_{t,\alpha}(r,p\,|\,\beta)  \bigr] \, |_{r=r_0.}
\end{aligned}
\label{eq:dcvar}
\end{equation}
Invoking this theorem requires verifying \cite[Assumptions 1-3]{Hong2009}. \cite[Assumptions 1-2]{Hong2009} are verified immediately by Proposition \ref{prop:conv} and Lemma \ref{lem:diffvar}. \cite[Assumption 3]{Hong2009} states that $\{V_{t+1,T}(r,p \, e^{-\kappa_Y}  + \psi_{t+1}\,|\,\beta) = v_{t,\alpha}(r,p\,|\,\beta)\}$ is a zero probability event, which is clear by Proposition \ref{prop:monop}.
To finish the proof, we note that the conditioning event $\{V_{t+1,T}(r,p \, e^{-\kappa_Y}  + \psi_{t+1}\,|\,\beta) \ge v_{t,\alpha}(r,p\,|\,\beta)\}$ is equivalent to
\[
\{ V_{t+1,T}(r,p\,e^{-\kappa_Y} + \psi_{t+1}\,|\,\beta) \ge V_{t+1,T}(r,p\,e^{-\kappa_Y} + \text{VaR}_{\alpha_t}(\psi_{t+1})\,|\,\beta)\}
\]
by monotonicity in the spot price (Proposition \ref{prop:monop}). Inverting both sides gives the desired characterization $\{ \psi_{t+1} \ge \text{VaR}_{\alpha_t}(\psi_{t+1})\}$. 
\end{proof}

\thmthreshspot*
\begin{proof}
The proof is by backward induction on $t$. The \emph{induction hypothesis} is:
\begin{itemize}
\item[--] The quantity $p + \partial_r \tilde{V}_{t,T}(r,p\,|\,\beta)$ is nondecreasing.
\end{itemize}
Due to the characterization of $r_{t,T}(p\,|\,\beta)$ as the solution to the minimization problem $\tilde{r}\, p + \tilde{V}_{t,T}(\tilde{r},p\,|\,\beta)$ over $\tilde{r} \in \mathcal R$, this induction hypothesis implies that $r_{t,T}(p\,|\,\beta)$ is nonincreasing in $p$.

\vspace{7pt} \noindent \textbf{Some Remarks.} Before continuing with the proof, we make some remarks regarding differentiability in $r$ and $p$. The value function $V_{t+1,T}(r,\, \cdot \,|\,\beta)$ is \emph{piecewise continuously differentiable}\footnote{The notion of \emph{piecewise continuously differentiability} for some function $f : [a,b] \rightarrow \mathbb R$ states that there exists $a_i$ with $a=a_0 < a_1 < \cdots < a_n=b$ such that $f$ is continuously differentiable when restricted to $[a_i,a_{i+1}]$ for each $i < n$. More precisely, for a fixed $i$, $f'(x)$ at $x \in (a_i, a_{i+1})$ is the derivative, while $f'(a_i)$ is the right derivative, and $f'(a_{i+1})$ is the left derivative and $f'$ is continuous on $[a_i,a_{i+1}]$. By Theorem \ref{thm:optimal_policy}, the optimal policy $x_{t,T}(r,p\,|\,\beta)$ is piecewise linear and hence, piecewise continuously differentiable in $r$.} in the resource state $r$. This can be shown via an induction argument combined with applications of Lemma \ref{lem:sensitivity}, the envelope theorem, and the fact that $x_{t,T}(r,p\,|\,\beta)$ is piecewise linear. Thus, our argument centers on resource states in the interval $(0,\Rmax)$ at which differentiability holds. We then extend the result of the theorem to directional derivatives at the nondifferentiable endpoints/breakpoints by invoking piecewise continuous differentiability and taking limits for each ``piece.'' 

\vspace{7pt} \noindent  \textbf{Base Case.} We first consider the base case, $t+1=T$, i.e., the case of the terminal value function $V_{t,T}(s\,|\,\beta)$.
By Assumption \ref{ass:compensationlip}, the derivative
\begin{align*}
\partial_r V_{t+1,T}(r,p \, &e^{-\kappa_Y} + \psi_{t+1}\,|\,\beta)\\
&= -p_\text{ref} \bigl[ 1 + 2 \gamma_h \, h(r) +  \rhooperator_{\beta_T} [\gamma_Y((p \, e^{-\kappa_Y} + \psi_{t+1}) \, e^{-\kappa_Y} + \psi_{t+1,Y})] \bigr] 
\end{align*}
is a decreasing function in $p$, and invoking the inequality $|\rhooperator_{\beta_t}(A) - \rhooperator_{\beta_t}(B)| \le \rhooperator_{\beta_t} (|A-B| )$, we have that for any $p$ and $q$,
\begin{align*}
\bigl|\partial_r V_{t+1,T}(r,p \, e^{-\kappa_Y} + \psi_{t+1}\,|\,\beta) - \partial_r V_{t+1,T}(r,q \, e^{-\kappa_Y} + \psi_{t+1}\,|\,\beta)\bigr| \le p_\text{ref} \, L_{\gamma_Y} e^{-2\kappa_Y}\, |p  - q|.
\end{align*}
Since the Lipschitz constant satisfies $L_{\gamma_Y} \le p_\text{ref}^{-1} \, e^{2\kappa_Y}$ (by Assumption \ref{ass:compensationlip}), the right-hand-side of the inequality above can be bounded by $|p-q|$. By Lemma \ref{lem:sensitivity}, we have
\begin{align*}
\partial_r \tilde{V}_{t,T}(r,p\,|\,\beta) = (1&-\lambda_t) \, \mathbf{E}\bigl[ \partial_r V_{t+1,T}(r,p \, e^{-\kappa_Y} + \psi_{t+1}\,|\,\beta) \bigr] \\
& + \lambda_t \, \mathbf{E}\bigl[ \partial_r V_{t+1,T}(r,p \, e^{-\kappa_Y} + \psi_{t+1}\,|\,\beta) \, | \, \psi_{t+1} \ge \textnormal{VaR}_{\alpha_t}(\psi_{t+1})  \bigr] ,
\end{align*}
which satisfies $| \partial_r \tilde{V}_{t,T}(r,p\,|\,\beta) - \partial_r \tilde{V}_{t,T}(r,q\,|\,\beta)  \bigr| \le |p-q|$. Therefore, it follows that the quantity $p + \partial_r \tilde{V}_{t,T}(r,p\,|\,\beta)$ is nondecreasing in $p$.

\vspace{7pt} \noindent  \textbf{Inductive Step (Part 1).} 
Let us now consider the value function at time $t$ evaluated at the spot price $P = p \, e^{-\kappa_Y} + \psi_{t}$ (i.e., from the perspective of time $t-1$). Recall that the optimal policy at time $t$, by Theorem \ref{thm:optimal_policy}, is to charge up to (or as close as possible to) the threshold $r_{t,T}(p\,|\,\beta)$. We define the price sets $\mathscr P_\text{low}(r) = \{P: r+\xmax < r_{t,T}(p\,|\,\beta)\}$, $ \mathscr P_\text{med}(r) = \{P: r \le r_{t,T}(p\,|\,\beta) \le r+\xmax\}$, and  $\mathscr P_\text{high}(r) = \{P:r_{t,T}(p\,|\,\beta) < r\}$, where it is possible for some of these sets to be empty. 

Due to $r_{t,T}(p\,|\,\beta)$ being nonincreasing in $p$, these price sets are disjoint and monotone in the sense that any element of $\mathscr P_\text{low}(r)$ is no greater than any element of $\mathscr P_\text{med}(r)$, and any element of $\mathscr P_\text{med}(r)$ is no greater than any element of $\mathscr P_\text{high}(r)$. See Figure \ref{fig:pricesets} for an illustration.
\begin{figure}[h]
        \centering
        \includegraphics[width=\textwidth]{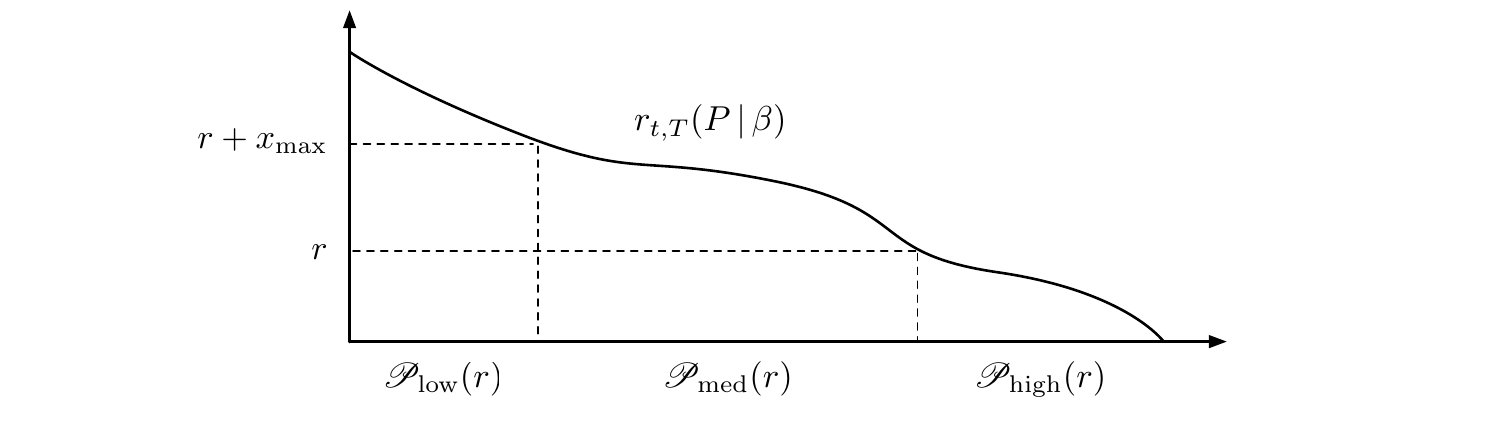}
        \caption{Illustration of the Price Sets}
        \label{fig:pricesets}
\end{figure}

The value function can be written using the optimal policy and the post-decision value function:
\[
V_{t,T}(r,p\,|\,\beta) = x_{t,T}(r,p \, | \, \beta) \cdot P - c_f + \tilde{V}^\beta_{t,T}(r+x_{t,T}(r,p \, | \, \beta) ,P).
\]
Doing casework on each of the spot price intervals, we have
\begin{equation*}
V_{t,T}(r,p\,|\,\beta) = \begin{cases}
\xmax \cdot P -c_f + \tilde{V}_{t,T}(r+\xmax,P\,|\,\beta) &\text{ if } P \in \mathscr P_\text{low}(r),\\
(r_{t,T}(p\,|\,\beta) - r ) \cdot P -c_f+ \tilde{V}_{t,T}(r_{t,T}(p\,|\,\beta),P\,|\,\beta) &\text{ if } P \in \mathscr P_\text{med}(r),\\
-c_f+\tilde{V}_{t,T}(r,P\,|\,\beta) &\text{ if } P \in \mathscr P_\text{high}(r).\\
\end{cases}
\end{equation*}
Differentiating in $r$ and adding $P$ to both sides, we have
\begin{equation*}
P + \partial_r V_{t,T}(r,p\,|\,\beta) = \begin{cases}
P+\partial_r \tilde{V}_{t,T}(r+\xmax,P\,|\,\beta) &\text{ if } P \in \mathscr P_\text{low}(r),\\
0 &\text{ if } P\in \mathscr P_\text{med}(r),\\
P+\partial_r \tilde{V}_{t,T}(r,p\,|\,\beta) &\text{ if } P \in \mathscr P_\text{high}(r).\\
\end{cases}
\end{equation*}
Recall the characterization of $r_{t,T}(p\,|\,\beta)$ minimizes the quantity $\tilde{r}\, p + \tilde{V}_{t,T}(\tilde{r},p\,|\,\beta)$. For $P \in \mathscr P_\text{low}(r)$, we can infer by convexity and the property $r+\xmax < r_{t,T}(p\,|\,\beta)$ that $P + \partial_r \tilde{V}_{t,T}(r+\xmax,P\,|\,\beta) \le 0$. Similar reasoning yields $P+\partial_r \tilde{V}_{t,T}(r,p\,|\,\beta) \ge 0$ for $P \in \mathscr P_\text{high}(r)$. Hence, $P + \partial_r V_{t,T}(r,p\,|\,\beta)$ is nondecreasing in $P$.

\vspace{7pt} \noindent  \textbf{Inductive Step (Part 2).}
Given this, we are now able to more easily analyze the main quantity of interest, $p+\partial_r \tilde{V}_{T-1,T}(r,p\,|\,\beta)$. By Lemma \ref{lem:sensitivity}, we have
 \begin{equation}
 \begin{aligned}
p+\partial_r \tilde{V}_{T-1,T}(r,p\,|\,\beta) &= (1-\lambda_t) \, \bigl[p+\mathbf{E}\bigl[ \partial_r V_{T,T}(r,p \, e^{-\kappa_Y} + \psi_{T}\,|\,\beta) \bigr] \bigr] \\
&+ \lambda_t \, \bigl[ p + \mathbf{E}\bigl[ \partial_r V_{T,T}(r,p \, e^{-\kappa_Y} + \psi_{T}\,|\,\beta) \, | \, \psi_{t} \ge \textnormal{VaR}_{\alpha_t}(\psi_{t})   \bigr] \bigl].
\end{aligned}
\label{eq:inductive1}
\end{equation}
The first term in brackets can be written as
\begin{align*}
p+\mathbf{E}\bigl[ \partial_r &V_{T,T}(r,p \, e^{-\kappa_Y} + \psi_{T}\,|\,\beta) \bigr] = p+ \mathbf{E}\bigl[ P + \partial_r V_{t,T}(r,p\,|\,\beta) - P \bigr]\\
&= (p -p \, e^{-\kappa_Y}) - \E(\psi_{t}) +\mathbf{E}\bigl[ P + \partial_r V_{t,T}(r,p\,|\,\beta) \bigr],
\end{align*}
which is clearly nondecreasing in $p$ given the property we proved in Part 1. The second term is the same, except with conditional expectations that do not depend on $p$.
\end{proof}

\marginalval*
\begin{proof}
We proceed via induction. The base case, that $\partial_r \tilde{V}_{T-1,T}(r,p\,|\,\beta)$, is nonincreasing in the spot price can be easily verified by essentially repeating the proof for the base case of Theorem \ref{thm:threshspot}. For the inductive step, we assume the property holds at $t$ and verify it for $t-1$. Let $P = p \, e^{-\kappa_Y} + \psi_{t}$. Following the definitions from the proof of Theorem \ref{thm:threshspot}, we have
\begin{equation*}
\partial_r V_{t,T}(r,p\,|\,\beta) = \begin{cases}
\partial_r \tilde{V}_{t,T}(r+\xmax,P\,|\,\beta) &\text{ if } P \in \mathscr P_\text{low}(r),\\
-P &\text{ if } P\in \mathscr P_\text{med}(r),\\
\partial_r \tilde{V}_{t,T}(r,p\,|\,\beta) &\text{ if } P \in \mathscr P_\text{high}(r).
\end{cases}
\end{equation*}
Although this function is clearly piecewise nonincreasing from the induction hypothesis, we would like to verify that $\partial_r V_{t,T}(r,p\,|\,\beta)$ nonincreasing over all $P$. We first consider the term $\partial_r \tilde{V}_{t,T}(r+\xmax,P\,|\,\beta)$. When $P \in \mathscr P_\text{low}(r)$ and the optimal unconstrained charge level $r_{t,T}(p\,|\,\beta)$ is not reachable from $r$, we can apply the same reasoning as in the proof of Theorem \ref{thm:threshspot} to deduce via convexity that $P+\partial_r \tilde{V}_{t,T}(r+\xmax,P\,|\,\beta) \le 0$. When $P \in \mathscr P_\text{med}(r)$, we know that $r_{t,T}(p\,|\,\beta)$ is reachable from $r$ and thus it follows that $P+\partial_r \tilde{V}_{t,T}(r+\xmax,P\,|\,\beta) \ge 0$. Rearranging these inequalities, we can see that
\begin{equation}
\partial_r V_{t,T}(r,p\,|\,\beta) = \min \bigl\{-P, \, \partial_r \tilde{V}_{t,T}(r+\xmax,P\,|\,\beta) \bigr\} \quad \text{if } P \in \mathscr P_\text{low}(r) \cup \mathscr P_\text{med}(r).
\label{eq:lowmed}
\end{equation}
By the induction hypothesis, $\partial_r V_{t,T}(r,p\,|\,\beta)$ is the minimum of two nonincreasing functions on $P \in \mathscr P_\text{low}(r) \cup \mathscr P_\text{med}(r)$.

Now let us consider the term $\partial_r \tilde{V}_{t,T}(r,p\,|\,\beta)$. For $P \in \mathscr P_\text{med}(r)$, the optimal charge level is reachable and thus $r \le r_{t,T}(p\,|\,\beta)$; by convexity $P+\partial_r \tilde{V}_{t,T}(r,p\,|\,\beta) \le 0$. For $P \in \mathscr P_\text{high}(r)$, we have $r > r_{t,T}(p\,|\,\beta)$ and thus $P+\partial_r \tilde{V}_{t,T}(r,p\,|\,\beta) \ge 0$. Therefore,
\begin{equation}
\partial_r V_{t,T}(r,p\,|\,\beta) = \max \bigl\{-P, \, \partial_r \tilde{V}_{t,T}(r,p\,|\,\beta) \bigr\} \quad \text{if } P \in \mathscr P_\text{med}(r) \cup \mathscr P_\text{high}(r).
\label{eq:medhigh}
\end{equation}
Combining (\ref{eq:lowmed}) and (\ref{eq:medhigh}), we see that $\partial_r V_{t,T}(r,p\,|\,\beta)$ is nonincreasing over all $P \in \mathbb R$. This proves the first part of the lemma. To conclude the second part, we invoke Lemma \ref{lem:sensitivity} to get
 \begin{equation*}
 \begin{aligned}
\partial_r \tilde{V}_{T-1,T}(r,p\,|\,\beta) = (1 -\lambda_t) \, &\mathbf{E}\bigl[ \partial_r V_{T,T}(r,p \, e^{-\kappa_Y} + \psi_{T}\,|\,\beta) \bigr] \\
&+ \lambda_t \, \mathbf{E}\bigl[ \partial_r V_{T,T}(r,p \, e^{-\kappa_Y} + \psi_{T}\,|\,\beta) \, | \, \psi_{t} \ge \textnormal{VaR}_{\alpha_t}(\psi_{t})   \bigr].
\end{aligned}
\end{equation*}
The right-hand-side is clearly nonincreasing in $p$.
\end{proof}

\thmcompatone*
\begin{proof}
By Lemma \ref{lem:sensitivity}, we have the equation
 \begin{equation}
 \begin{aligned}
\partial_r \tilde{V}_{0,T}(r,p\,|\,\beta) = (1 -\lambda_0) \, &\mathbf{E}\bigl[ \partial_r V_{1,T}(r,p \, e^{-\kappa_Y} + \psi_{1}\,|\,\beta) \bigr] \\
&+ \lambda_0 \, \mathbf{E}\bigl[ \partial_r V_{1,T}(r,p \, e^{-\kappa_Y} + \psi_{1}\,|\,\beta) \, | \, \psi_{1} \ge \textnormal{VaR}_{\alpha_0}(\psi_{1})   \bigr].
\label{eq:varless}
\end{aligned}
\end{equation}
We note that by the dynamic programming property of Theorem \ref{thm:bellman}, the value function $V_{1,T}^\beta$ at time $t=1$ depends only on $\beta_{-0}^*$ and not on the risk-aversion of the first time period $\beta_0 = (\lambda_0, \alpha_0)$. By the nonincreasing property of $\partial_r V_{1,T}^\beta(r,p)$ in $p$ from Lemma \ref{lem:marginalval}, it follows that 
\begin{equation*}
\mathbf{E}\bigl[ \partial_r V_{1,T}(r,p \, e^{-\kappa_Y} + \psi_{1}\,|\,\beta) \, | \, \psi_{1} \ge \textnormal{VaR}_{\alpha_0}(\psi_{1})   \bigr] \le \mathbf{E}\bigl[ \partial_r V_{1,T}(r,p \, e^{-\kappa_Y} + \psi_{1}\,|\,\beta) \bigr]
\end{equation*}
and $\alpha_0 \mapsto \mathbf{E}\bigl[ \partial_r V_{1,T}(r,p \, e^{-\kappa_Y} + \psi_{1}\,|\,\beta) \, | \, \psi_{1} \ge \textnormal{VaR}_{\alpha_0}(\psi_{1})   \bigr]$ is nonincreasing. Combining these two properties allows us to conclude from (\ref{eq:varless}) that $\partial_r \tilde{V}_{0,T}(r,p\,|\,\beta)$ is nonincreasing in $\beta$ over $\mathcal B(\beta^*_{-0})$, which in turn implies that the charging threshold $r_{0,T}(p\,|\,\beta)$ is \emph{nondecreasing}. The thresholds for $t > 0$ remain unchanged when $\beta$ (again by the dynamic programming property) and thus, it is easy to conclude that $\mathbf{R}_T^\beta \le \mathbf{R}_T^{\beta'}$ almost surely when $\beta \le \beta'$. Risk compatibility follows by the resource-monotonicity property of $f_T^-$. 
\end{proof}

\clearpage
\bibliographystyle{plainnat}
\bibliography{/Users/drjiang/Documents/Dropbox/Pittsburgh/Bibtex/Bib,/Users/drjiang/Documents/Dropbox/Pittsburgh/Bibtex/Risk,/Users/drjiang/Documents/Dropbox/Pittsburgh/Bibtex/EV}

\end{document}